\newcommand{\dd}{\!\mathrm{d}}
\newcommand{\C}{\mathbb{C}}
\newtheorem{theorem}{Theorem}[section]
\newtheorem{remark}[theorem]{Remark}
\theoremstyle{definition}
\newtheorem{definition}[theorem]{Definition}
\newtheorem{example}{Example}
\newtheorem{assumption}{Assumption}
\newtheorem{glt}{GLT}
\newcommand{\email}[1]{\href{mailto:#1}{\texttt{#1}}}
\title{Blocking structures, approximation, and preconditioning\thanks{Submitted to the editors DATE.}}
\date{}
\author{Nikos Barakitis\thanks{Department of Science and High Technology. University of Insubria, Como, Italy. Department of Informatics. Athens University of Economics and Business, Athens, Greece. (\email{nickbar@aueb.gr})}
\and Marco Donatelli\thanks{Department of Science and High Technology. University of Insubria, Como, Italy. (\email{marco.donatelli@uninsubria.it})}
\and Samuele Ferri\thanks{Department of Science and High Technology. University of Insubria, Como, Italy. (\email{sferri1@uninsubria.it})}
\and Valerio Loi\thanks{Department of Science and High Technology. University of Insubria, Como, Italy. (\email{vloi@uninsubria.it})}
\and Stefano Serra--Capizzano\thanks{Department of Science and High Technology and CLAP Research Center. University of Insubria, Como, Italy. Department of Information Technology, Uppsala University, Uppsala, Sweden. (\email{s.serracapizzano@uninsubria.it})}
\and Rosita L. Sormani\thanks{Department of Science and High Technology. University of Insubria, Como, Italy. Department of Mathematics and Computer Science. University of Cagliari, Cagliari, Italy. (\email{rl.sormani@uninsubria.it})}
}
\begin{document}
	\maketitle
	
	\begin{abstract}
		We consider block-structured matrices $A_n$, where the blocks are of (block) unilevel Toeplitz type with $s\times t$ matrix-valued generating functions.
		Under mild assumptions on the size of the (rectangular) blocks, the asymptotic distribution of the singular values of {the} associated matrix-sequences is identified {and, when} the related singular value symbol is Hermitian, {it coincides with} the spectral symbol.
		{Building on} the theoretical derivations, we {approximate the matrices with} simplified block structures {that} show two important features: a) the related simplified matrix-sequence has the same distributions as $\{A_{{n}}\}_{{n}}$; b) a generic linear system {involving the simplified structures} can be solved in $O(n\log n)$ arithmetic operations.
		The two key properties a) and b) suggest a natural way for preconditioning a linear system with coefficient matrix $A_n$. {Under mild assumptions,} the singular value analysis and the spectral analysis of the preconditioned matrix-sequences is provided, together with {a} wide set of numerical experiments.
	\end{abstract}
	
	\subparagraph{Keywords.} Block structures, matrix-sequence, distribution of eigenvalue and singular values in the Weyl sense, block Toeplitz matrix, generating function, preconditioning in Krylov solvers.
	
	\subparagraph{MSC codes.} 65F08, 15A18, 15B05

	\section{Introduction}
	The approximation of systems of Ordinary or Partial Differential Equations ({ODEs} or PDEs), the reconstruction of {signals} or images with incomplete data, including the inpainting problem, all lead to block structured matrices, where the single blocks have (block) multilevel Toeplitz {structure} or, when there is no space invariance, {a more general form}; see \cite{mixed1,GLT-blocks-d-dim,GLT-blocks-1-dim,MR3689933,MR3543002,dumb} for examples in a differential context and \cite{missing-data1,blo vs imag2,missing-data2} for applications in imaging and signal processing. In fact, when dealing with {ODEs} and PDEs with variable coefficients, the resulting matrix-sequences belong to {one of the (block) multilevel Generalized Locally Toeplitz (GLT) spaces ($\ast$-algebras when the blocks are square)}, with blocks having size {that depends} on the approximation scheme and on the dimensionality $d$ of the physical domain, while the number of levels is exactly $d$: in the works \cite{GLT-blocks-d-dim,GLT-blocks-1-dim,MR3543002,GSI,GSII,tom} {and references therein,} both the theory and applications are considered. {Specifically, we mention} \cite{EMI,MR4389580,MR4623368,MR3689933,MR4284081,dumb,MR3904142} for the development of numerical algorithms based on the theory.

\subsection*{State of the art and novelty}
	Only in very recent studies \cite{pre-prequel,prequel,conj-blo-I} the spectral {and} singular value distribution in the Weyl sense have been formally derived for block structured matrix-sequences, so generalizing the GLT theories. In fact, in these initial theoretical works, the distribution analysis is  {thoroughly} described in the block unilevel setting, with rectangular blocks of different dimensions. We recall that {the} presence of zeros in the distribution function and the order of the zeros give information on the conditioning and on the size and nature of the subspaces where the ill-conditioning arise (see e.g. \cite{Se-Ti} and references therein).
	Here, in order to overcome the difficulty induced by the ill-conditioning, we consider preconditioning strategies {for} Krylov methods based on the theoretical analysis, {proving} clustering results at $1$ {for the preconditioned matrix-sequence}, both in the eigenvalue and singular value sense. Indeed, this is the first work where the distribution results for block structures are employed for computational purposes, by introducing proper preconditioners. Furthermore, the theoretical derivations on the clustering of the preconditioned matrix-sequences are well corroborated by a vast set of numerical experiments and visualizations, that include also dense and two-level block structures, with blocks approximating fractional differential operators.
	
		The current work is organized as follows. In {Section} \ref{sec:general_block} and in Section \ref{sec:Toeplitz_block} we {introduce} the problem setting and general theoretical tools, {including} the spectral and singular value analysis previously developed.
	Section \ref{sec:appr-prec} deals with the approximation of the original structures, the design of associated preconditioners, and the singular value and spectral analysis of the resulting preconditioned matrix-sequences. Section \ref{sec:numerical_tests} contains numerical tests and visualizations showing the effectiveness of the proposed preconditioners.
	{In Section \ref{sec:fin} we make some concluding remarks and discuss open problems}.
	
	\section{Problem setting and general theoretical tools}\label{sec:general_block}

	In the current section, we introduce a particular class of structured matrices. {Let $\nu$ and $n_1,n_2,\dots,n_{\nu}$ be positive integers, and $n = \sum_{i=1}^{\nu} n_i$}. We consider $A_n=[{A_{ij}}]_{i,j=1}^{\nu}$ given by
	\begin{small}
		\begin{equation} \label{eq:A_general}
			A_n = \left[
			\begin{tikzpicture}[baseline=(m.center)]
				\matrix (m) [matrix of math nodes,column sep=0.15em,row sep=0.15em] {|[draw,dashed, minimum width=0.8cm, minimum height=0.8cm]| A_{11} & |[draw,dashed, minimum width=2.4cm, minimum height=0.8cm]| A_{12} & \cdots & |[draw,dashed, minimum width=1.6cm, minimum height=0.8cm]| A_{1\nu} \\
					|[draw,dashed, minimum width=0.8cm, minimum height=2.4cm]| A_{21} & |[draw,dashed, minimum width=2.4cm, minimum height=2.4cm]| A_{22} & \cdots & |[draw,dashed, minimum width=1.6cm, minimum height=2.4cm]| A_{2\nu} \\
					\vdots & \vdots & \ddots & \vdots \\
					|[draw,dashed, minimum width=0.8cm, minimum height=1.6cm]| A_{\nu 1} & |[draw,dashed,minimum width=2.4cm, minimum height=1.6cm]| A_{\nu2} & \cdots & |[draw,dashed, minimum width=1.6cm, minimum height=1.6cm]| A_{\nu\nu} \\
				};
			\end{tikzpicture}
			\right],
		\end{equation}
	\end{small}
    {in which for $i=1,\dots,\nu$} each diagonal block {$A_{ii}$} is a matrix of size $sn_i\times tn_i$, {where $s$ and $t$ are fixed positive integers}, and the off-diagonal blocks {$A_{ij}$}, $i\neq j$, are general $sn_{i}\times tn_{j}$ rectangular matrices. {When} $s=t$, the diagonal blocks $A_{ii}$, $i=1,\ldots,\nu$, are square matrices. {Overall, $A_n$ has size $sn\times tn$ and is composed of $\nu^2$ blocks}.

	We consider a matrix-sequence $\{A_n\}_n$ {in which $A_n$ takes} the form in (\ref{eq:A_general}) {and increases in size as $n$ grows}. {Throughout the paper we work under} the following natural assumptions, {motivated by the discussion in} \cite{pre-prequel,prequel}:
    \begin{assumption} \label{assump1}
        The parameters $\nu, s,t$ are fixed independent of $n$ {and $\nu\ge 2, s,t\ge 1$} (the case $\nu=1$ is not of interest for obvious reasons);
    \end{assumption}
    \begin{assumption} \label{assump2}
        $\lim_{n,n_i\rightarrow \infty} \frac{n_i}{n}=c_i$ {with} $c_i\in (0,1)$, $i=1,\dots,\nu$, {and}  $c_1+c_2+\cdots + c_\nu=1$.
    \end{assumption}
    {Throughout this work, it is often assumed that each $c_i\in\mathbb{Q}^+$, so that $c_i = \frac{\alpha_i}{\beta_i}$ for some $\alpha_i,\beta_i\in\mathbb{N}^+$, $i=1,\dots,\nu$. Setting $m=\mathrm{lcm} (\beta_1,\ldots,\beta_\nu)$ with ${\rm lcm}$ denoting the least common multiple, we will also write $c_i = \frac{m_i}{m}$ for suitable $m_i\in\mathbb{N}^+$, $i=1,\dots,\nu$.  The assumption on the rational character of the ratios $c_i$, $i=1,\dots,\nu$, is essential in the proof techniques in \cite{pre-prequel,prequel}, while the difficulty has been circumvented in \cite{conj-blo-I}, so allowing irrational ratios, by using the new notion of generalized approximating class of sequences (see \cite{gacs} for the new notion and related concepts and \cite{conj-blo-I} for a quite involved application of the new tools).
When dealing with coupled PDEs as the basic elasticity problem \cite{braess} or the more involved EMI model \cite{EMI}, the involved ratios are often rational, but also the irrational setting is of interest, when considering e.g. a circular domain embedded in the square $[0,1]^2$ and mixed discretizations \cite{mixed1}.

    }

	We report the notion of Weyl distributions in the spectral and singular value sense {associated to a matrix-sequence}. We choose a very general formulation, {as needed by} various concrete examples. {Whenever measurability is mentioned, we refer to the Lebesgue measure; in particular $m_\ell$ denotes the Lebesgue measure in $\mathbb{R}^\ell$, with $\ell$ a positive integer. Moreover, when working with matrix-valued functions, operations and properties such as continuity are intended componentwise}.
	
	\begin{definition}[\cite{GLT-blocks-d-dim,GSI,GSII,MR0890515,TyZ}]
		Let ${f}:\Omega \to\mathbb{C}^{s\times t}$ be a  measurable function defined on a measurable set $\Omega\subset\mathbb R^\ell$ with $\ell\ge 1$ {and}
		$0<m_\ell(\Omega)<\infty$. Let $\mathcal{C}_0(\mathbb{K})$ be the set of continuous functions with compact support over $\mathbb{K}\in \{\mathbb{C}, \mathbb{R}_0^+\}$ and let $\{A_{{n}}\}_{{n}}$ be a sequence of matrices, {where $A_n$ has size $sn\times tn$ and singular values} $\sigma_j(A_{{n}})$, $j=1,\ldots,{rn}$, {with $r=\min\{s,t\}$}.
		\begin{itemize}
			\item $\{A_{{n}}\}_{n}$ is distributed as ${f}$ in the sense of the singular values, and we write
			\begin{align*}
				\{A_{{n}}\}_{{n}}\sim_\sigma{f},\nonumber
			\end{align*}
			if the following limit relation holds for {any} $F\in\mathcal{C}_0(\mathbb{R}_0^+)$
			\begin{align}\label{eq:distribution_sv}
				\lim_{n\to\infty}\frac{1}{{rn}}\sum_{j=1}^{{rn}} F(\sigma_j(A_{n}))=\frac1{m_\ell(\Omega)}\int_{\Omega}\frac{\sum_{i=1}^{r}F\left(\sigma_i\left({f}\left(\boldsymbol{\theta}\right)\right)\right)}{r}\,\dd{\boldsymbol{\theta}}.
			\end{align}
			
			The function ${f}$ is called the {singular value symbol} {and it} describes the singular value distribution of $ \{A_{{n}}\}_{{n}}$.
			
			\item In the case where {$r=s=t$} and {each} $A_n$ is square of size {$rn$} with eigenvalues $\lambda_j(A_n)$, $j=1,\ldots,{rn}$, the matrix-sequence  $\{A_n\}_n$ is distributed as $f$ in the sense of the eigenvalues, and we write
			\begin{align*}
				\{A_{{n}}\}_{{n}}\sim_\lambda{f},\nonumber
			\end{align*}
			if the following limit relation holds for all $F\in\mathcal{C}_0(\mathbb{C})$
			\begin{align}\label{eq:distribution_eig}
				\lim_{{n}\to\infty}\frac{1}{{{rn}}}\sum_{j=1}^{{{rn}}}F(\lambda_j(A_{n}))=\frac1{m_\ell(\Omega)}\int_{\Omega}  \frac{\sum_{i=1}^{{r}} F\left(\lambda_i\left({f} \left(\boldsymbol{\theta}\right)\right)\right)}{{r}}\,\dd{\boldsymbol{\theta}}.
			\end{align}
			
			The function ${f} $ is called the eigenvalue symbol and it describes the eigenvalue distribution of $\{A_{{n}}\}_{{n}}$.
		\end{itemize}
	\end{definition}

    {The notion of clustering can be seen as a special case of distribution.
\begin{definition} \label{def:cluster}
    Given $p\in\mathbb{R}^+_0$, a matrix-sequence $\{A_n\}_n$, with $A_n\in\mathbb{C}^{sn \times tn}$, is (weakly) clustered at $p$ in the sense of the singular values if, for any $\epsilon>0$
        \begin{equation*}
            \#\big\{ j\in\{1,\ldots,rn\} : \lvert\sigma_j(A_n)-p\rvert > \epsilon \big\} = o(n), \qquad\text{as } n\to\infty,
        \end{equation*}
    where $r=\min\{s,t\}$. Moreover, $\{A_n\}_n$ is strongly clustered at $p$ in the sense of the singular values if for any $\epsilon>0$,
        \begin{equation*}
            \#\big\{ j\in\{1,\ldots,rn\} : \lvert\sigma_j(A_n)-p\rvert > \epsilon \big\} = O(1), \qquad\text{as } n\to\infty.
        \end{equation*}
    When $A_n$ is square, corresponding definitions can be given for the eigenvalues, considering $p\in\mathbb{C}$ and a neighborhood $B(p,\epsilon)$ of $p$ with radius $\epsilon$.
\end{definition}
A weak cluster at $p$ means that, as $n$ tends to infinity, the number of singular values or eigenvalues lying farther than $\epsilon$ from $p$ is negligible with respect to the size of the matrix, while in a strong cluster it is bounded by a constant independent of $n$. Hence, when considering Definition \ref{def:cluster}, the distributions of the matrix-sequence can be described by a constant function $p$.}

{For our Krylov preconditioning purposes, the case of distribution and clustering at $1$ retains special significance. Indeed, in that setting, the ideal scenario is clearly the strong clustering, in which the number of outliers is bounded by a constant uniformly independent of $n$.}

The last tool we introduce falls in the category of the extradimensional results \cite{pre-prequel,prequel,MR3904142}, whose initial idea goes back to an unpublished  work by Tyrtyshnikov (see the discussion in \cite{pre-prequel,MR3904142}).

	\begin{theorem}[\cite{pre-prequel,prequel}]\label{Extradimensional}
		Let $\{X_n\}_n$ be a given (rectangular) matrix-sequence with $X_n$ of order $n^{(1)} \times n$, $n^{(1)} \geq n$. Let $P_n \in \mathbb{C}^{n \times n'}$, $P_{n^{(1)}} \in \mathbb{C}^{n^{(1)} \times n^{(1)'}}$ be two compression matrices with $n' < n$, ${n^{(1)'}} < n^{(1)}$ and $P_n^*P_n = I_{n'}$, $P_{n^{(1)}}^*P_{n^{(1)}} = I_{n^{(1)'}}$, and let us consider $Y_{n'} = P_{n^{(1)}}^* X_n P_n$. Under the assumption that
		\begin{displaymath}
			\lim_{n,n' \to \infty} \frac{n'}{n} = \lim_{{n^{(1)}},{n^{(1)'}} \to \infty} \frac{{n^{(1)'}}}{{n^{(1)}}} = 1,
		\end{displaymath}
		i.e. $n = n' + o(n)$, ${n^{(1)}} = {n^{(1)'}} + o({n^{(1)}})$, we have
		\begin{displaymath}
			\{X_n\}_n \sim_\sigma f \quad \text{iff} \quad \{Y_{n'}\}_{n'} \sim_\sigma f.
		\end{displaymath}
	\end{theorem}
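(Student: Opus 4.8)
The plan is to exploit the well-known characterization of singular value distribution in terms of the moments $\frac{1}{rn}\sum_j F(\sigma_j)$ for $F$ ranging over a class rich enough to test weak-$*$ convergence, and to show that passing from $X_n$ to the compression $Y_{n'}=P_{n^{(1)}}^*X_nP_n$ perturbs these moments by a vanishing amount. First I would recall that since $P_n^*P_n=I_{n'}$ and $P_{n^{(1)}}^*P_{n^{(1)}}=I_{n^{(1)'}}$, both $P_n$ and $P_{n^{(1)}}$ have operator norm $1$, hence $Y_{n'}$ is a compression of $X_n$ and by the interlacing/pinching principle for singular values we have $\sigma_j(Y_{n'})\le\sigma_j(X_n)$ for every $j$ up to $\min\{n',n^{(1)'}\}$; more precisely, completing $P_n$ and $P_{n^{(1)}}$ to unitary matrices and writing $\widetilde X_n=P_{n^{(1)}}^*X_nP_n$ as a genuine submatrix of a unitarily equivalent copy of $X_n$, the singular values of $Y_{n'}$ interlace those of $X_n$. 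This is the quantitative backbone.

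The key steps, in order: (i) reduce the claim to showing that for every $F\in\mathcal C_0(\mathbb R_0^+)$,
\[
\frac{1}{rn}\sum_{j=1}^{rn}F(\sigma_j(X_n))-\frac{1}{rn'}\sum_{j=1}^{rn'}F(\sigma_j(Y_{n'}))\longrightarrow 0
\]
(here $r=\min\{s,t\}$ enters only through the block sizes of $X_n$ and $Y_{n'}$; with the stated orders one has $X_n$ of size $n^{(1)}\times n$ and $Y_{n'}$ of size $n^{(1)'}\times n'$, and the number of singular values is $\min$ of the two dimensions in each case, which differ from $n$ and $n'$ respectively by $o(n)$ under Assumption-type hypotheses); (ii) since $n'=n+o(n)$ and $n^{(1)'}=n^{(1)}+o(n^{(1)})$, replace the normalization $\frac{1}{rn'}$ by $\frac{1}{rn}$ at the cost of an $o(1)$ term, using $\|F\|_\infty<\infty$; (iii) bound the difference of the two unnormalized sums by first extending $\sum_{j=1}^{rn'}F(\sigma_j(Y_{n'}))$ to a sum over $rn$ indices by appending $F(0)$ for the missing $rn-rn'=o(n)$ terms — again an $o(n)$ error since $F$ is bounded — and then invoking the interlacing inequalities to control $\sum_j\big|F(\sigma_j(X_n))-F(\sigma_j(Y_{n'}))\big|$; (iv) conclude that the whole difference is $o(n)$ after dividing by $rn$, hence tends to $0$, and therefore the two moment limits coincide, giving the stated equivalence in both directions.

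The main obstacle — and the step requiring care rather than mere bookkeeping — is step (iii): controlling $\frac{1}{n}\sum_j\big|F(\sigma_j(X_n))-F(\sigma_j(Y_{n'}))\big|$. Pointwise interlacing alone does not immediately give a small sum, because a rank-$O(n)$ perturbation (which is what restricting to a subspace of codimension $o(n)$ really is, in the relevant asymptotic sense) could a priori move $O(n)$ singular values. The resolution is that the codimension here is genuinely $o(n)$: $n-n'=o(n)$ and $n^{(1)}-n^{(1)'}=o(n^{(1)})$, so $Y_{n'}$ is obtained from $X_n$ by deleting $o(n)$ rows and $o(n)$ columns, whence by the standard singular value interlacing for submatrices $\sigma_{j}(X_n)\ge\sigma_{j}(Y_{n'})\ge\sigma_{j+(n-n')+(n^{(1)}-n^{(1)'})}(X_n)$ for the appropriate ranges of $j$. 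Since $F$ is uniformly continuous with compact support, one splits the index set into the $o(n)$ indices where $F(\sigma_j(X_n))$ and $F(\sigma_j(Y_{n'}))$ may be compared to different "shifted" singular values — contributing $o(n)\cdot\|F\|_\infty$ — and the remaining indices where $\sigma_j(X_n)$ and $\sigma_j(Y_{n'})$ are squeezed between $\sigma_j(X_n)$ and $\sigma_{j+o(n)}(X_n)$; a telescoping/rearrangement argument over the latter, combined with the monotone ordering of the $\sigma_j$ and the boundedness of $F$, shows their contribution is also $o(n)$. This is precisely the extradimensional device, and once the interlacing estimate is set up the remaining inequalities are routine.
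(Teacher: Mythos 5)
The paper states Theorem~\ref{Extradimensional} and attributes it to \cite{pre-prequel,prequel} without reproducing a proof, so there is no in-paper argument to compare against. Judged on its own merits, your plan is the standard one and is sound in outline: completing $P_n$ and $P_{n^{(1)}}$ to unitaries realizes $Y_{n'}$ as a genuine submatrix of a matrix unitarily equivalent to $X_n$, the Cauchy interlacing inequalities for submatrices give $\sigma_{j+\delta}(X_n)\le\sigma_j(Y_{n'})\le\sigma_j(X_n)$ with $\delta=(n-n')+(n^{(1)}-n^{(1)'})=o(n)$, and the discrepancy in normalization constants contributes only $o(1)$.

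Where your write-up is still vague is step~(iii), which you correctly flag as the one needing care but then wave through with ``a telescoping/rearrangement argument.'' To close it, spell out the following: since $F\in\mathcal C_0(\mathbb R^+_0)$ is supported in $[0,M]$, only indices $j$ with $\sigma_j(Y_{n'})\le M$ matter, and the transition window where $\sigma_j(X_n)>M\ge\sigma_{j+\delta}(X_n)$ contains at most $\delta=o(n)$ indices (contributing $o(n)\|F\|_\infty$). For the remaining indices the telescoping identity
\[
\sum_{j\ge j_0}\bigl(\sigma_j(X_n)-\sigma_{j+\delta}(X_n)\bigr)\le\delta\,\sigma_{j_0}(X_n)\le\delta M
\]
combined with a Chebyshev count shows that for any $\eta>0$ at most $\delta M/\eta=o(n)$ indices satisfy $\sigma_j(X_n)-\sigma_{j+\delta}(X_n)>\eta$; for those indices bound by $\|F\|_\infty$, and for the rest use the interlacing sandwich and the uniform continuity of $F$ to get $|F(\sigma_j(X_n))-F(\sigma_j(Y_{n'}))|\le\omega_F(\eta)$. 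Dividing by $n$ and sending $n\to\infty$, then $\eta\to0$, finishes the argument. One small caveat worth stating explicitly: the bound $n^{(1)}-n^{(1)'}=o(n^{(1)})$ only yields $o(n)$ if $n^{(1)}=O(n)$, which holds in the paper's intended application ($n^{(1)}=sn$, $n=tn$ after relabeling) but is not literally part of the hypotheses as you have copied them; without that proportionality the interlacing shift could be large relative to $n$ and the argument would break.
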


	\subsection{Basic structured matrix-sequences and (rectangular) GLT sequences}
	
	In this section, we {briefly review} the fundamental structured matrix-sequences {relevant to} our problem and {essential for obtaining} our distribution and approximation results. The {structures} under consideration are Toeplitz and Hankel sequences {generated by functions in} $L^1([-\pi,\pi], {\C^{s \times t}})$ and zero-distributed sequences. {Such} sequences are part of the GLT {spaces/$\ast$}-algebras, {the theoretical framework in which we are able to derive our results. Let us} provide a {concise} overview of these tools. \\
	
	We begin by discussing zero-distributed sequences, which {form} a significant and interesting {$\ast$}-algebra in their own right in the square case {and} play a crucial role in explaining the clustering {behavior} of the preconditioners.
	\begin{definition}\label{def:zero-distributed-sequence}
		A {matrix-sequence $\{Z_n\}_n$, with $Z_n \in \C^{sn \times tn}$,} is zero-distributed if $\{Z_n\}_n \sim_\sigma 0$, i.e.,
		\begin{displaymath}
			\lim_{n \to \infty} \frac{1}{{rn}} \sum_{i=1}^{{rn}} F(\sigma_i(Z_n)) = F(0), \quad \forall F \in \mathcal{C}_c(\mathbb{R}),
		\end{displaymath}
		{where $r=\min\{s,t\}$}.
	\end{definition}
	This is equivalent to saying that the singular values are (weakly) clustered at $0$, {according to Definition \ref{def:cluster}}. {A useful characterization of zero-distributed sequences is given by the following theorems; see \cite{taud2}.}
	
	\begin{theorem}\label{thm:zero-characterization}
		Let $\{Z_n\}_n$ be a matrix-sequence, with $Z_n \in \C^{sn \times tn}$. The following conditions are equivalent.
		\begin{enumerate}
			\item $\{Z_n\}_n$ is a zero-distributed sequence.
			\item For every $n$ {it holds} $Z_n = R_n + N_n$, where, {denoting with $\|\cdot\|$ the spectral norm},
			\begin{displaymath}
				\lim_{n \to \infty} \frac{\operatorname{rank}(R_n)}{n} = \lim_{n \to \infty} \|N_n\| = 0.
			\end{displaymath}
		\end{enumerate}
	\end{theorem}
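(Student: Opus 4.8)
The plan is to prove the two implications separately. The direction (2)$\Rightarrow$(1) is a short consequence of the Weyl subadditivity inequalities for singular values, while (1)$\Rightarrow$(2) is the substantive one and requires constructing the additive splitting from a singular value decomposition of $Z_n$ with a threshold that vanishes, but slowly enough.

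For (2)$\Rightarrow$(1), I would start from the Weyl-type bound $\sigma_{i+j-1}(X+Y)\le\sigma_i(X)+\sigma_j(Y)$; with $X=R_n$, $Y=N_n$ and $j=1$ this gives $\sigma_i(Z_n)\le\sigma_i(R_n)+\|N_n\|$, so that $\sigma_i(Z_n)\le\|N_n\|$ for every index $i>\rho_n:=\operatorname{rank}(R_n)$ (and $\rho_n\le rn$ since $R_n\in\C^{sn\times tn}$ with $r=\min\{s,t\}$). Fixing $F\in\mathcal{C}_c(\mathbb{R})$, which is bounded by some $M$ and uniformly continuous, I would split the arithmetic mean according to whether $i\le\rho_n$ or $i>\rho_n$ and estimate
\begin{align*}
\left\lvert\frac{1}{rn}\sum_{i=1}^{rn}F(\sigma_i(Z_n))-F(0)\right\rvert
\;\le\;\frac{2M\rho_n}{rn}+\sup_{0\le\tau\le\|N_n\|}\lvert F(\tau)-F(0)\rvert,
\end{align*}
where both terms go to $0$ because $\rho_n=o(n)$, $\|N_n\|\to 0$, and $F$ is continuous at $0$. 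Hence $\{Z_n\}_n\sim_\sigma 0$.

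For (1)$\Rightarrow$(2), I would first note that $\{Z_n\}_n\sim_\sigma 0$ forces, for each fixed $\epsilon>0$, the bound $\#\{i:\sigma_i(Z_n)>\epsilon\}=o(n)$: one tests the defining relation against a continuous compactly supported $F_\epsilon$ with $0\le F_\epsilon\le 1$, $F_\epsilon(0)=1$ and $F_\epsilon\equiv 0$ outside $(-\epsilon,\epsilon)$, using $\sigma_i(Z_n)\ge 0$ --- this is precisely the equivalence with weak clustering at $0$ recorded after Definition \ref{def:zero-distributed-sequence}. The key point is then to pass from this $\epsilon$-by-$\epsilon$ information to a \emph{single} splitting, which requires a threshold $\epsilon_n\to 0$ selected by a diagonal argument: for each $k\in\mathbb{N}^+$ choose an increasing index $n_k$ with $\#\{i:\sigma_i(Z_n)>1/k\}\le n/k$ for all $n\ge n_k$, and set $\epsilon_n=1/k$ for $n_k\le n<n_{k+1}$ (and $\epsilon_n=1$ for $n<n_1$); then $\epsilon_n\to 0$ and $\#\{i:\sigma_i(Z_n)>\epsilon_n\}=o(n)$. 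Finally, with an SVD $Z_n=U_n\Sigma_nV_n^*$, I would write $\Sigma_n=\Sigma_n'+\Sigma_n''$, where $\Sigma_n'$ retains only the diagonal entries exceeding $\epsilon_n$, and set $R_n=U_n\Sigma_n'V_n^*$, $N_n=U_n\Sigma_n''V_n^*$; then $Z_n=R_n+N_n$, $\|N_n\|=\|\Sigma_n''\|\le\epsilon_n\to 0$, and $\operatorname{rank}(R_n)=\#\{i:\sigma_i(Z_n)>\epsilon_n\}=o(n)$, which is condition (2).

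The only genuine obstacle is this last construction: a fixed threshold yields a perturbation $N_n$ of bounded but not vanishing norm, whereas too fast a decay of the threshold loses control of $\operatorname{rank}(R_n)$; the slow, diagonal choice of $\epsilon_n$ is exactly what balances the two requirements. Everything else reduces to routine use of the Weyl singular value inequalities and of the uniform continuity of compactly supported test functions.
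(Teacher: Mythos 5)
Your proof is correct and coincides with the standard argument for this characterization in the GLT literature; the paper itself states the result as a citation to \cite{taud2} without reproducing a proof. The two ingredients you use — Weyl's singular value inequality $\sigma_{i+j-1}(X+Y)\le\sigma_i(X)+\sigma_j(Y)$ for the direction (2)$\Rightarrow$(1), and an SVD truncation at a threshold $\epsilon_n\to 0$ obtained by a diagonal argument from the weak clustering at $0$ for (1)$\Rightarrow$(2) — are exactly the ones in the reference.
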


\begin{theorem}\label{thm:zero-char2}
		Let $\{Z_n\}_n$ be a matrix-sequence, with $Z_n \in \C^{sn \times tn}$ and let $\|\cdot\|_p$ the Schatten $p$-norm, i.e., the $p$ norm of the vector of the singular values of the argument with $p\ge 1$; see {\rm \cite{Bhatia-book}}. If $\|Z_n\|_p^p=o(n)$ then $\{Z_n\}_n$ is a zero-distributed sequence.
	\end{theorem}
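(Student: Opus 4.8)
The plan is to reduce the assertion to the weak clustering of the singular values of $Z_n$ at $0$, which, by the remark following Definition~\ref{def:zero-distributed-sequence}, is equivalent to $\{Z_n\}_n\sim_\sigma 0$; the clustering itself then follows from a Markov-type inequality applied to the singular values. Throughout, I would write $\sigma_1(Z_n)\ge\sigma_2(Z_n)\ge\cdots\ge\sigma_{rn}(Z_n)\ge 0$ for the singular values of $Z_n$, with $r=\min\{s,t\}$, so that $\|Z_n\|_p^p=\sum_{i=1}^{rn}\sigma_i(Z_n)^p$.

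First I would fix $\epsilon>0$ and set $k_n(\epsilon)=\#\{\,i\in\{1,\dots,rn\}:\sigma_i(Z_n)>\epsilon\,\}$. Since all the terms $\sigma_i(Z_n)^p$ are nonnegative and each index with $\sigma_i(Z_n)>\epsilon$ contributes more than $\epsilon^p$ to the sum,
\begin{equation*}
    \epsilon^p\,k_n(\epsilon)\ \le\ \sum_{i\,:\,\sigma_i(Z_n)>\epsilon}\sigma_i(Z_n)^p\ \le\ \sum_{i=1}^{rn}\sigma_i(Z_n)^p\ =\ \|Z_n\|_p^p.
\end{equation*}
Hence $k_n(\epsilon)\le\epsilon^{-p}\|Z_n\|_p^p$, and since $\epsilon$ and $p$ are fixed while $\|Z_n\|_p^p=o(n)$ by hypothesis, we get $k_n(\epsilon)=o(n)$. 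As $\epsilon>0$ was arbitrary and $r$ is a fixed positive integer, this is exactly the statement that $\{Z_n\}_n$ is weakly clustered at $0$ in the singular value sense.

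To close the loop, I would also record the standard implication from weak clustering at $0$ to the defining limit: given $F\in\mathcal{C}_c(\mathbb{R})$, it is bounded, say $|F|\le M$, and uniformly continuous, so for each $\eta>0$ there is $\epsilon>0$ with $|F(x)-F(0)|<\eta$ for $0\le x\le\epsilon$; splitting the mean $\tfrac{1}{rn}\sum_{i=1}^{rn}F(\sigma_i(Z_n))$ according to whether $\sigma_i(Z_n)\le\epsilon$ or not and using $k_n(\epsilon)=o(n)$ yields
\begin{equation*}
    \left| \frac{1}{rn}\sum_{i=1}^{rn}F(\sigma_i(Z_n))-F(0)\right|\ \le\ \eta+\frac{2M\,k_n(\epsilon)}{rn},
\end{equation*}
whose right-hand side tends to $\eta$ as $n\to\infty$; letting $\eta\to 0$ gives $\{Z_n\}_n\sim_\sigma 0$.

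As an alternative route, the conclusion also follows from Theorem~\ref{thm:zero-characterization}: choosing the threshold $\epsilon_n=(\|Z_n\|_p^p/n)^{1/(2p)}\to 0$ and truncating the singular value decomposition of $Z_n$ at $\epsilon_n$ yields a splitting $Z_n=R_n+N_n$ with $\|N_n\|\le\epsilon_n\to 0$ and, again by the Markov inequality above, $\operatorname{rank}(R_n)\le\epsilon_n^{-p}\|Z_n\|_p^p=(n\,\|Z_n\|_p^p)^{1/2}=o(n)$. I do not expect a genuine obstacle here; the only point needing a little care is the harmless bookkeeping of the fixed factor $r$ between the $rn$ singular values of $Z_n$ and the $o(n)$ appearing in the definitions.
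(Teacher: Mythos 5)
Your proof is correct. The paper itself does not reprove Theorem~\ref{thm:zero-char2}; it simply cites \cite{taud2}, so there is no in-text argument to compare against, but the route you take — the Markov (Chebyshev) inequality $\epsilon^p\,k_n(\epsilon)\le\|Z_n\|_p^p$ giving $k_n(\epsilon)=o(n)$, followed by either the uniform-continuity argument for $F\in\mathcal{C}_c(\mathbb{R})$ or the SVD truncation into $R_n+N_n$ feeding Theorem~\ref{thm:zero-characterization} — is exactly the standard proof in the literature for this statement. Both of your variants are sound, the factor $r$ is indeed harmless as you note, and the choice $\epsilon_n=(\|Z_n\|_p^p/n)^{1/(2p)}$ in the second variant correctly balances $\|N_n\|\to 0$ against $\operatorname{rank}(R_n)\le(n\,\|Z_n\|_p^p)^{1/2}=o(n)$; the only degenerate case $\|Z_n\|_p=0$ is trivially handled since then $Z_n=0$.
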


	We now describe {Toeplitz matrix-sequences, which constitute the main ingredient} in the considered problem. {More specifically, we consider Toeplitz matrices constructed from} a univariate matrix-valued function $f$, {called the} \textit{generating function}. Refer to \cite{BS,MR0890515} for the notion in the scalar-valued case and \cite{GLT-blocks-1-dim} for the matrix-valued setting.
	
	\begin{definition}\label{def:toeplitz}
		Let $f$ be a $s\times t$ matrix-valued function belonging to $L^1([-\pi,\pi],{\C^{s\times t}})$ and periodically extended to the whole real line. {The} Toeplitz matrix $T_{n}(f) \in \mathbb{C}^{sn \times tn}$ associated with $f$ is defined as
		\begin{equation}\label{eq:toe_def-d=1}
			T_n(f)=\left[\hat f_{i-j}\right]_{i,j=1}^n,\nonumber
		\end{equation}
		where
		\begin{equation}\label{fourier-toeplitz}
			\hat{f}_{k}=\frac1{2\pi}\int_{-\pi}^{\pi}\!\!f(\theta)\,{\rm e}^{-k i \theta}\dd\theta \in \mathbb{C}^{s \times t},\qquad k\in\mathbb Z,\qquad \iota^2=-1,
		\end{equation}
		are the Fourier coefficients of $f$, {in which the integral is computed componentwise}. For $s=t$, $T_{n}(f)$ is square.
	\end{definition}
\begin{definition}\label{def:rectangular_toeplitz}
		{Extending} Definition \ref{def:toeplitz}, for $n'\neq n$ it is possible to define a (inherently) rectangular $sn\times tn'$  matrix $T_{n,n'}(f)$ as
		\begin{equation}\label{eq:rectangular_Toeplitz}
			T_{n,n'}(f)= \left[\hat f_{i-j}\right]_{\stackrel{\substack{i=1,\ldots,n \\ j=1,\ldots,n'}}{}}.
		\end{equation}
	\end{definition}

    Similarly to the Toeplitz case, we can construct Hankel matrices from the Fourier coefficients of a function \cite{MR1740439}.

	\begin{definition}\label{def:hankel}
		Let $f$ be a $s\times t$ matrix-valued function belonging to $L^1([-\pi,\pi],{\C^{s\times t}})$ and periodically extended to the whole real line.
    The Hankel matrix of size $sn\times tn$ generated by $f\in L^1([-\pi,\pi],\C^{s\times t})$ is defined as
	\begin{equation}\label{eq:hankel_def-d=1}
		H_n(f)= \sum_{k=1}^{2n-1} K_{n}^{(k)} \otimes  \hat{{f}}_{k},
	\end{equation}
	where $K_{n}^{(k)}$ denotes the matrix of order $n$ whose $(i, j )$ entry equals $1$ if $j + i = k + 1$ and zero otherwise, while $\hat{{f}}_{k}$, $k \in\mathbb Z$, are the Fourier coefficients of $f$ as in equation (\ref{fourier-toeplitz}).
  \end{definition}

For these structures, many norm and distribution results are known.

    	\begin{theorem}[\cite{Se-Ti-LPO}] \label{thm:bound_toeplitz}
		If \(n \in \mathbb{N}, 1 \leq p \leq \infty\) and \(f \in L^p([-\pi, \pi], {\C^{s \times t}})\), then
		\begin{equation}
			\|T_n(f)\|_p \leq n^{1/p} \frac{\|f\|_{L^p}} {(2\pi)^{1/p}}.
		\end{equation}
	\end{theorem}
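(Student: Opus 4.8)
The plan is to regard $T_n$ as a linear operator $\mathcal{T}_n\colon f\mapsto T_n(f)$, to establish the estimate \emph{directly} at the two endpoints $p=1$ and $p=\infty$, and then to recover every intermediate $p$ by interpolation. The key algebraic object is the integral representation
\begin{equation*}
  T_n(f)=\frac{1}{2\pi}\int_{-\pi}^{\pi}\bigl(\mathbf{e}_n(\theta)\,\mathbf{e}_n(\theta)^{*}\bigr)\otimes f(\theta)\,\dd\theta,\qquad
  \mathbf{e}_n(\theta)=\bigl(\E^{-\iota\theta},\E^{-2\iota\theta},\dots,\E^{-n\iota\theta}\bigr)^{\!\top},
\end{equation*}
which follows at once from \eqref{fourier-toeplitz} by comparing the $(i,j)$ blocks. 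The kernel $J_n(\theta):=\mathbf{e}_n(\theta)\,\mathbf{e}_n(\theta)^{*}$ is rank one with $\|\mathbf{e}_n(\theta)\|_2^2=n$, hence its unique nonzero singular value equals $n$ and $\|J_n(\theta)\|_p=n$ for \emph{every} $p\in[1,\infty]$; this uniformity in $p$ is what will eventually give the sharp constant.

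For $p=\infty$ one has to prove the classical bound $\|T_n(f)\|\le\|f\|_{L^\infty}$. The route I would take is to write $\mathbf{u}\in\C^{sn}$ and $\mathbf{v}\in\C^{tn}$ through their $\C^{s}$- and $\C^{t}$-sized block components $u_i,v_j$ and pass to the vector-valued trigonometric polynomials $\mathbf{u}(\theta)=\sum_i u_i\E^{\iota i\theta}$, $\mathbf{v}(\theta)=\sum_j v_j\E^{\iota j\theta}$; by orthogonality of the exponentials $\tfrac{1}{2\pi}\int_{-\pi}^{\pi}\|\mathbf{u}(\theta)\|_2^2\,\dd\theta=\|\mathbf{u}\|_2^2$ and likewise for $\mathbf{v}$, while \eqref{fourier-toeplitz} gives $\mathbf{u}^{*}T_n(f)\mathbf{v}=\tfrac{1}{2\pi}\int_{-\pi}^{\pi}\mathbf{u}(\theta)^{*}f(\theta)\mathbf{v}(\theta)\,\dd\theta$; bounding the integrand by $\|f(\theta)\|\,\|\mathbf{u}(\theta)\|_2\,\|\mathbf{v}(\theta)\|_2$ and invoking the Cauchy--Schwarz inequality then yields $|\mathbf{u}^{*}T_n(f)\mathbf{v}|\le\|f\|_{L^\infty}\|\mathbf{u}\|_2\|\mathbf{v}\|_2$. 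For $p=1$ I would instead feed the integral representation into the triangle inequality for the trace norm (in Bochner form, legitimate since $f\in L^1$), combine it with the tensor identity $\|A\otimes B\|_1=\|A\|_1\|B\|_1$ and with $\|J_n(\theta)\|_1=n$, and obtain $\|T_n(f)\|_1\le\tfrac{1}{2\pi}\int_{-\pi}^{\pi}n\,\|f(\theta)\|_1\,\dd\theta=n\,\|f\|_{L^1}/(2\pi)$, where throughout $\|f\|_{L^p}=\bigl(\int_{-\pi}^{\pi}\|f(\theta)\|_p^{p}\,\dd\theta\bigr)^{1/p}$ with the Schatten-$p$ norm of $f(\theta)$ inside.

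These two statements say precisely that $\mathcal{T}_n$ is bounded from $L^1([-\pi,\pi],\C^{s\times t})$ into the Schatten-$1$ class with norm at most $n/(2\pi)$, and from $L^\infty([-\pi,\pi],\C^{s\times t})$ into the Schatten-$\infty$ class with norm at most $1$, the domain norms being taken with the Schatten-$q$ norm of $f(\theta)$ inside. Since the Schatten-$q$ classes form a complex interpolation scale, and so do the corresponding $L^{q}$ spaces of Schatten-$q$-valued functions (with matching exponents), Riesz--Thorin/Calder\'on interpolation at the parameter $\eta$ with $1/p=1-\eta$ gives $\|\mathcal{T}_n\|\le(n/(2\pi))^{1-\eta}\cdot1^{\eta}=(n/(2\pi))^{1/p}=n^{1/p}/(2\pi)^{1/p}$, which is the assertion. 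Alternatively one can quote the general results on unitarily invariant norms of matrix-valued linear positive operators, of which $\mathcal{T}_n$ is an instance.

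I expect the one genuinely delicate point to be the power of $n$. Applying the integral triangle inequality directly to the representation above, or equivalently Jensen's inequality for the convex functional $X\mapsto\|X\|_p$, only yields the weaker bound $\|T_n(f)\|_p\le\tfrac{n}{2\pi}\int_{-\pi}^{\pi}\|f(\theta)\|_p\,\dd\theta$, carrying the full factor $n$ rather than $n^{1/p}$; to gain the correct exponent one really needs the two-endpoint/interpolation mechanism, which in turn relies on $J_n(\theta)$ being rank one, so that its Schatten norm is $n$ uniformly in $p$ and the $p=\infty$ endpoint is attained without loss. The rest is routine bookkeeping for the (possibly rectangular) matrix-valued setting: Schatten norms of rectangular matrices, the identity $\|A\otimes B\|_q=\|A\|_q\|B\|_q$, and the above reading of $\|f\|_{L^p}$.
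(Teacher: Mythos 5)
Your proof is correct. Since the theorem is quoted in the paper directly from the cited reference \cite{Se-Ti-LPO} without an internal proof, there is nothing in the text to compare against line by line; so let me compare against the spirit of that reference instead. Your two-endpoint-plus-interpolation route is the classical, clean way to obtain precisely the Schatten-$p$ bound with the sharp constant $n^{1/p}/(2\pi)^{1/p}$: the integral representation $T_n(f)=\frac{1}{2\pi}\int J_n(\theta)\otimes f(\theta)\,\dd\theta$ with the rank-one kernel $J_n$, the $p=\infty$ estimate by duality with trigonometric polynomials, the $p=1$ estimate via the Bochner triangle inequality and $\|A\otimes B\|_1=\|A\|_1\|B\|_1$, and Riesz--Thorin/Calder\'on between $L^1(S^1)\to S^1$ and $L^\infty(S^\infty)\to S^\infty$ all check out, including the bookkeeping $1/p=1-\eta$. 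By contrast, \cite{Se-Ti-LPO} is formulated for \emph{arbitrary} unitarily invariant norms of matrix-valued linear positive operators and proceeds through positivity/majorization rather than complex interpolation (interpolation as you set it up is tailored to the Schatten scale and does not cover a general symmetric gauge function); the statement actually quoted in the present paper is the Schatten specialization of that more general machinery. So your argument is a legitimate, arguably more self-contained route to the stated inequality; the reference's argument buys generality beyond Schatten norms. One small point worth flagging in a polished write-up: when you invoke Calder\'on's theorem $[L^{p_0}(A_0),L^{p_1}(A_1)]_\eta=L^p([A_0,A_1]_\eta)$ with $p_1=\infty$, the identification requires the usual care (dense subclass of simple functions, or Stein interpolation), though none of this affects the operator-norm bound you extract. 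Your closing remark is exactly right: the naive Jensen estimate gives the factor $n$ rather than $n^{1/p}$, and the rank-one structure of $J_n$ is precisely what makes the $p=\infty$ endpoint lossless and the interpolation sharp.
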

    {
    \begin{theorem}[\cite{MR1671591}]\label{thm:szego-d=1}
	   Let {${f}\in L^1([-\pi,\pi],\C^{s\times t})$}. Then,
		\begin{equation}
			\label{eq:szego-sv-d=1}
			\{T_{n}({f})\}_{{n}}\sim_\sigma~{f}.
		\end{equation}
		In addition, if the generating function $f$ is a Hermitian-valued function, then necessarily $s=t$ and 	
		\begin{equation}
			\label{eq:szego-eig-d=1}
			\{T_{n}({f})\}_{{ n}}\sim_\lambda~{f}.
		\end{equation}
	\end{theorem}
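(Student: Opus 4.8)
The statement will be deduced from a single fact: $\{T_n(g)\}_n\sim_\lambda g$ for every Hermitian-valued $g\in L^1([-\pi,\pi],\C^{d\times d})$. Indeed, a Hermitian-valued generating function must be square (so $s=t$ in the second assertion), and the singular value statement for a rectangular $f$ reduces to the eigenvalue statement for the Hermitian dilation $\bigl[\begin{smallmatrix}0 & f\\ f^{*} & 0\end{smallmatrix}\bigr]$. I would prove the Hermitian eigenvalue fact in two moves. \emph{Step 1 (Hermitian trigonometric polynomials).} Suppose $g$ is Hermitian-valued with $\widehat g_k=0$ for $|k|>q$. Then $T_n(g)$ is banded and Hermitian with $\|T_n(g)\|\le\|g\|_{L^\infty}$, so all the eigenvalues of $T_n(g)$ and of $g(\theta)$ lie in a fixed compact interval $I$. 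Using the classical identities expressing a product of banded Toeplitz matrices as the Toeplitz matrix of the product plus a correction built from Hankel blocks, whose rank is bounded by a constant depending only on $q$ (see \cite{BS}), an induction on $k$ gives $T_n(g)^k=T_n(g^k)+R_{n,k}$ with $\operatorname{rank}(R_{n,k})$ bounded independently of $n$ and $\|R_{n,k}\|\le 2\|g\|_{L^\infty}^k$, hence $|\operatorname{tr}(R_{n,k})|\le C(k,q)$ uniformly in $n$. Since $\operatorname{tr}(T_n(g^k))=n\operatorname{tr}(\widehat{(g^k)}_0)=\frac{n}{2\pi}\int_{-\pi}^{\pi}\operatorname{tr}(g(\theta)^k)\,\dd\theta$, dividing by $dn$ yields, for every $k\in\mathbb{N}$,
\begin{equation*}
\lim_{n\to\infty}\frac{1}{dn}\sum_{j=1}^{dn}\bigl(\lambda_j(T_n(g))\bigr)^k=\frac{1}{2\pi d}\int_{-\pi}^{\pi}\sum_{i=1}^{d}\bigl(\lambda_i(g(\theta))\bigr)^k\,\dd\theta .
\end{equation*}
By the Weierstrass approximation theorem on $I$, the same limit relation holds with $x^k$ replaced by any $F\in\mathcal C_0(\mathbb R)$, that is $\{T_n(g)\}_n\sim_\lambda g$.

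\emph{Step 2 (general Hermitian $g\in L^1$).} Let $g_N$ be the Cesàro/Fejér trigonometric-polynomial means of $g$; these are again Hermitian-valued, $\|g-g_N\|_{L^1}\to 0$, and along a subsequence $g_N\to g$ almost everywhere, hence in measure. By Theorem \ref{thm:bound_toeplitz} with $p=1$ we get $\|T_n(g)-T_n(g_N)\|_1=\|T_n(g-g_N)\|_1\le\frac{n}{2\pi}\|g-g_N\|_{L^1}$, so $\{T_n(g_N)\}_n$ is an approximating class of sequences for $\{T_n(g)\}_n$ as $N\to\infty$. Since $\{T_n(g_N)\}_n\sim_\lambda g_N$ for each $N$ by Step 1 and $g_N\to g$ in measure, the a.c.s. convergence theorem for the eigenvalue distribution of Hermitian matrix-sequences (see \cite{GLT-blocks-1-dim}) gives $\{T_n(g)\}_n\sim_\lambda g$. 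In particular, if the $f$ in the statement is Hermitian-valued, then $f(\theta)=f(\theta)^{*}$ forces $s=t$, and $\{T_n(f)\}_n\sim_\lambda f$ is exactly this conclusion with $g=f$, $d=s=t$, which proves \eqref{eq:szego-eig-d=1}.

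\emph{Step 3 (singular values for rectangular $f$).} For arbitrary $f\in L^1([-\pi,\pi],\C^{s\times t})$ set $g=\bigl[\begin{smallmatrix}0 & f\\ f^{*} & 0\end{smallmatrix}\bigr]\in L^1([-\pi,\pi],\C^{(s+t)\times(s+t)})$, which is Hermitian-valued (note $g$ is linear in $f$, so the integrability is automatic). Using $T_n(f^{*})=T_n(f)^{*}$, a symmetric permutation of rows and columns transforms $T_n(g)$ into $\bigl[\begin{smallmatrix}0 & T_n(f)\\ T_n(f)^{*} & 0\end{smallmatrix}\bigr]$, whose spectrum consists of $\pm\sigma_j(T_n(f))$, $j=1,\dots,rn$, together with $|s-t|n$ zeros; likewise the eigenvalues of $g(\theta)$ are $\pm\sigma_i(f(\theta))$, $i=1,\dots,r$, together with $|s-t|$ zeros. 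Given $F\in\mathcal C_0(\mathbb R_0^{+})$, apply $\{T_n(g)\}_n\sim_\lambda g$ from Step 2 to the even extension $\widetilde F(x)=F(|x|)\in\mathcal C_0(\mathbb R)$: the $F(0)$ contributions carried by the zero eigenvalues cancel on the two sides, and after dividing both members by $2r/(s+t)$ one recovers precisely \eqref{eq:distribution_sv} with $\Omega=[-\pi,\pi]$ and $m_1(\Omega)=2\pi$. Hence $\{T_n(f)\}_n\sim_\sigma f$, which is \eqref{eq:szego-sv-d=1}.

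\emph{Main difficulty.} The genuinely technical point is Step 1, namely showing that $T_n(g)^k$ and $T_n(g^k)$ differ only by a matrix of rank bounded uniformly in $n$, which is what legitimizes the moment method; once this is available, Step 2 uses nothing beyond the Schatten-$1$ bound of Theorem \ref{thm:bound_toeplitz} and the standard a.c.s. convergence result, and Step 3 is pure finite-dimensional linear algebra.
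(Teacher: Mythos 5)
The paper does not prove this statement; it is cited from Tilli's note \cite{MR1671591} as a known result, so there is no internal proof to compare against. Your blind reconstruction is correct and follows the classical route: a moment argument for banded Hermitian block Toeplitz matrices (relying on $T_n(g)^k=T_n(g^k)+R_{n,k}$ with $\operatorname{rank}(R_{n,k})$ bounded uniformly in $n$, which holds because the discrepancy in products of banded Toeplitz matrices is confined to two corners of size controlled by the bandwidth), followed by a Schatten-$1$ a.c.s.\ passage to general $L^1$ symbols, and finally the Hermitian dilation $\bigl[\begin{smallmatrix}0 & f\\ f^{*} & 0\end{smallmatrix}\bigr]$ together with a symmetric permutation to deduce the singular value statement for rectangular $f$ from the Hermitian eigenvalue statement. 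All three steps are sound: in Step~3 you correctly track the $|s-t|n$ and $|s-t|$ zero eigenvalues and verify that their $F(0)$-contributions cancel, and the normalization $r=\min\{s,t\}$ comes out right. The one point you leave as a black box is the bounded-rank identity for products of banded block Toeplitz matrices, but the corner-support argument (nonzero entries of $T_n(f_1)T_n(f_2)-T_n(f_1f_2)$ occur only for indices $i,j\le q$ or $i,j>n-q$) gives a rank bound of the order $2qd$ and the induction on $k$ then closes cleanly, so citing \cite{BS} is acceptable. For context, Tilli's own note argues somewhat differently in the base case, using trace-norm perturbation lemmas directly rather than the moment method, but both variants are standard and equivalent in strength.
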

    }
	
	\begin{theorem}[\cite{MR1740439}]\label{thm:fasino-tilli-d=1}
		Let {$f\in L^1([-\pi,\pi],\C^{s\times t})$}. Then,
		\begin{equation}
			\label{eq:fasino-tilli-d=1}
			\{H_{n}({f})\}_{{n}}\sim_\sigma~0.
		\end{equation}
	\end{theorem}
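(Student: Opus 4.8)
The plan is to show directly that $\{H_n(f)\}_n$ is \emph{weakly clustered at $0$} in the singular value sense, i.e. that for every fixed $\delta>0$
\[
\#\big\{\ell\in\{1,\dots,rn\}:\sigma_\ell(H_n(f))>\delta\big\}=o(n),\qquad r=\min\{s,t\},
\]
which by Definition~\ref{def:cluster} and the equivalence recalled right after Definition~\ref{def:zero-distributed-sequence} is precisely $\{H_n(f)\}_n\sim_\sigma 0$. The whole argument rests on a trace-norm bound for Hankel matrices together with an approximation of $f$ by a trigonometric polynomial.

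The first ingredient is a Schatten $1$-norm estimate parallel to Theorem~\ref{thm:bound_toeplitz}: for every $g\in L^1([-\pi,\pi],\C^{s\times t})$ one has $\|H_n(g)\|_1\le\tfrac{n}{2\pi}\|g\|_{L^1}$ (up to a fixed constant depending only on the matrix norm used in the definition of $\|\cdot\|_{L^1}$). To see this I would use (\ref{fourier-toeplitz}) to write $H_n(g)=\tfrac1{2\pi}\int_{-\pi}^{\pi}M_n(\theta)\,\mathrm d\theta$, where $M_n(\theta)\in\C^{sn\times tn}$ has $(i,j)$ block $\mathrm e^{-\iota(i+j-1)\theta}g(\theta)$, i.e. $M_n(\theta)=\mathrm e^{\iota\theta}\,\big(u(\theta)u(\theta)^{T}\big)\otimes g(\theta)$ with $u(\theta)=(\mathrm e^{-\iota\theta},\dots,\mathrm e^{-\iota n\theta})^{T}$. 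Since $u(\theta)u(\theta)^{T}$ is rank one with $\|u(\theta)u(\theta)^{T}\|_1=\|u(\theta)\|_2^2=n$, and the Schatten $1$-norm is multiplicative on Kronecker products, $\|M_n(\theta)\|_1=n\,\|g(\theta)\|_1$; the claimed bound then follows from the triangle inequality for the (finite-dimensional) Bochner integral. Equivalently one could relate $H_n(g)$ to a flipped, shifted Toeplitz matrix $T_n(\mathrm e^{-\iota n\theta}g)$ and invoke Theorem~\ref{thm:bound_toeplitz}, but some block-transpose bookkeeping is then needed when $s\neq t$.

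With this in hand, fix $\delta>0$ and $\varepsilon>0$. By density of trigonometric polynomials in $L^1([-\pi,\pi],\C^{s\times t})$, choose a matrix-valued trigonometric polynomial $p$ of degree $d=d(\varepsilon)$ with $\|f-p\|_{L^1}<\varepsilon$, and split $H_n(f)=H_n(p)+H_n(f-p)$. The $(i,j)$ block of $H_n(p)$ is $\hat p_{i+j-1}$, which vanishes as soon as $i+j-1>d$; hence for $n$ large $H_n(p)$ is supported in its first $d$ block rows, so $\rho:=\operatorname{rank}(H_n(p))\le sd$ is a constant independent of $n$. By Weyl's inequality for singular values of a sum, $\sigma_{\ell+\rho}(H_n(f))\le\sigma_\ell(H_n(f-p))+\sigma_{\rho+1}(H_n(p))=\sigma_\ell(H_n(f-p))$ for all $\ell\ge1$, whence, using also the elementary Markov-type bound $\#\{\ell:\sigma_\ell>\delta\}\le\delta^{-1}\sum_\ell\sigma_\ell$,
\begin{align*}
\#\big\{\ell:\sigma_\ell(H_n(f))>\delta\big\}
&\le \rho+\#\big\{\ell:\sigma_\ell(H_n(f-p))>\delta\big\}
\le \rho+\tfrac1\delta\,\|H_n(f-p)\|_1\\
&\le sd+\tfrac{n}{2\pi\delta}\,\|f-p\|_{L^1}
< sd+\tfrac{n\varepsilon}{2\pi\delta}.
\end{align*}
Dividing by $rn$ and letting $n\to\infty$ gives $\limsup_n\tfrac1{rn}\#\{\ell:\sigma_\ell(H_n(f))>\delta\}\le\tfrac{\varepsilon}{2\pi r\delta}$; since $\varepsilon>0$ was arbitrary the $\limsup$ is $0$, and since $\delta>0$ was arbitrary, $\{H_n(f)\}_n$ is weakly clustered at $0$, i.e. $\{H_n(f)\}_n\sim_\sigma 0$.

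I expect the main obstacle to be the Hankel trace-norm estimate: the naive bound $\|H_n(g)\|_1\le n\sum_k\|\hat g_k\|_1$ is useless because $\sum_k\|\hat g_k\|_1$ need not be finite for a generic $L^1$ function, so one genuinely needs the integral (rank-one kernel) representation above — the same mechanism underlying Theorem~\ref{thm:bound_toeplitz}, the only novelty being that the Hankel kernel $[\mathrm e^{-\iota(i+j-1)\theta}]_{i,j}=\mathrm e^{\iota\theta}\,u(\theta)u(\theta)^{T}$ involves $u u^{T}$ rather than $uu^{*}$, yet is still rank one with trace norm $n$. The remaining steps — density of trigonometric polynomials in $L^1$, the bounded rank of $H_n(p)$, Weyl's inequality, and the counting bound — are routine.
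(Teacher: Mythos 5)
Your argument is correct, and it is worth noting that the paper itself does not prove Theorem~\ref{thm:fasino-tilli-d=1}: it is cited from Fasino--Tilli \cite{MR1740439}. Your proof is therefore a genuine reconstruction rather than a match to an in-paper argument, but it is the standard one, and it is consistent in spirit with the discussion in Remark~\ref{hankel_continuous} (which handles the continuous case via $L^\infty$ approximation and the spectral-norm bound). What you add is exactly what is needed to pass from continuous $f$ to $L^1$ $f$: replace the $L^\infty$/spectral-norm control by $L^1$/trace-norm control, and replace the ``bounded rank + small norm'' decomposition by a Weyl-plus-Markov counting argument.

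Two small remarks on economy. First, the Schatten $1$-norm estimate $\|H_n(g)\|_1\le \tfrac{n}{2\pi}\|g\|_{L^1}$ that you derive from the rank-one integral kernel is precisely Theorem~\ref{thm:bound_hankel} with $p=1$, which the paper already states (for all $1\le p\le\infty$), so you could simply invoke it rather than re-prove it; the computation $M_n(\theta)=\mathrm e^{\iota\theta}\big(u(\theta)u(\theta)^{T}\big)\otimes g(\theta)$ with $\|u u^{T}\|_1=n$ and Kronecker multiplicativity is nonetheless a clean self-contained derivation and is indeed the mechanism behind that theorem. Second, your rank bound can be tightened to $\rho\le rd$ with $r=\min\{s,t\}$ since the nonzero part of $H_n(p)$ lies in an anti-triangular block corner of side $d$, constraining both the block rows and the block columns; this is immaterial since any bound uniform in $n$ suffices. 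The Weyl step $\sigma_{\ell+\rho}(H_n(f))\le\sigma_\ell(H_n(f-p))$ and the counting $\#\{\ell:\sigma_\ell(H_n(f-p))>\delta\}\le\delta^{-1}\|H_n(f-p)\|_1$ then deliver $o(n)$ outliers after letting first $n\to\infty$ and then $\varepsilon\to 0$, which is weak clustering at $0$ and hence $\{H_n(f)\}_n\sim_\sigma 0$. No gap.
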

	
	\begin{theorem}[\cite{MR1740439}]\label{thm:bound_hankel}
		If \(n \in \mathbb{N}, 1 \leq p \leq \infty\) and \(f \in L^p([-\pi, \pi], {\C^{s \times t}})\), then
		\begin{equation}
			\|H_n(f)\|_p \leq n^{1/p} \frac{\|f\|_{L^p}} { (2\pi)^{1/p}}.
		\end{equation}
	\end{theorem}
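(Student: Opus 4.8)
The plan is to reduce the statement to the Toeplitz bound of Theorem~\ref{thm:bound_toeplitz} by observing that, in the unilevel setting, a Hankel matrix is just a Toeplitz matrix read with the block-rows in reverse order, up to a unimodular modulation of the generating function. Concretely, let $J_n\in\mathbb{C}^{n\times n}$ be the flip matrix, $[J_n]_{ij}=1$ if $i+j=n+1$ and $0$ otherwise, and set
\[
  h(\theta) := \E^{\iota n\theta}\, f(-\theta)\in L^p([-\pi,\pi],\C^{s\times t}).
\]
The heart of the argument is the identity $H_n(f) = (J_n\otimes I_s)\, T_n(h)$.

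To check it I would compare block entries. From \eqref{eq:hankel_def-d=1}, the $(i,j)$ block of $H_n(f)$ equals $\sum_{k=1}^{2n-1}[K_n^{(k)}]_{ij}\,\hat f_k=\hat f_{i+j-1}$, since $[K_n^{(k)}]_{ij}=1$ precisely when $k=i+j-1$ and $i+j-1$ ranges in $\{1,\dots,2n-1\}$ for $i,j\in\{1,\dots,n\}$. Left multiplication by $J_n\otimes I_s$ exchanges block-row $i$ with block-row $n+1-i$, so the $(i,j)$ block of $(J_n\otimes I_s)H_n(f)$ is $\hat f_{(n+1-i)+j-1}=\hat f_{\,n-(i-j)}$. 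On the other hand, the change of variable $\theta\mapsto-\theta$ in \eqref{fourier-toeplitz} gives, for every $m\in\mathbb{Z}$,
\[
  \hat h_m=\frac{1}{2\pi}\int_{-\pi}^{\pi}\E^{\iota n\theta}f(-\theta)\,\E^{-\iota m\theta}\dd\theta=\frac{1}{2\pi}\int_{-\pi}^{\pi}f(\psi)\,\E^{-\iota(n-m)\psi}\dd\psi=\hat f_{\,n-m},
\]
hence the $(i,j)$ block of $T_n(h)$ is $\hat h_{i-j}=\hat f_{\,n-(i-j)}$, matching the previous computation. (Equivalently, $J_nK_n^{(k)}$ is the $n\times n$ matrix with ones on the $(n-k)$-th diagonal, and summing over $k=1,\dots,2n-1$ reconstitutes $T_n(h)$.)

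Once the identity is in hand, the estimate follows in two lines. The matrix $J_n\otimes I_s$ is a permutation, hence unitary, so by unitary invariance of the Schatten norms $\|H_n(f)\|_p=\|T_n(h)\|_p$ for every $p\in[1,\infty]$. Moreover $\|h\|_{L^p}=\|f\|_{L^p}$, because $|\E^{\iota n\theta}|=1$ and $\theta\mapsto-\theta$ is a measure-preserving involution of $[-\pi,\pi]$; in particular $h\in L^p\subseteq L^1$, so $T_n(h)$ is well defined. Applying Theorem~\ref{thm:bound_toeplitz} to $T_n(h)$ then gives
\[
  \|H_n(f)\|_p=\|T_n(h)\|_p\le n^{1/p}\,\frac{\|h\|_{L^p}}{(2\pi)^{1/p}}=n^{1/p}\,\frac{\|f\|_{L^p}}{(2\pi)^{1/p}}.
\]

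I do not expect a genuine obstacle: the argument is essentially index bookkeeping plus one substitution, and the only structural input — that the flip $J_n$ turns the Hankel pattern into a Toeplitz one up to a unimodular factor — is specific to the unilevel case and would break for genuine multilevel Hankel structures. A self-contained alternative, bypassing Theorem~\ref{thm:bound_toeplitz}, would be to establish the endpoints $p=1$ and $p=\infty$ directly and then interpolate via Riesz--Thorin for Schatten classes, but this route is more laborious and offers no advantage here.
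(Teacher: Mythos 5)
Your argument is correct and complete. The paper itself does not supply a proof of Theorem~\ref{thm:bound_hankel}: it is quoted from Fasino and Tilli \cite{MR1740439} as a known result, so there is no in-paper proof to compare against. Your reduction of $H_n(f)$ to a Toeplitz matrix via the flip $J_n$ and the modulated, reflected symbol $h(\theta)=\E^{\iota n\theta}f(-\theta)$ is verified by the index computation: the $(i,j)$ block of $(J_n\otimes I_s)T_n(h)$ is $\hat h_{(n+1-i)-j}=\hat f_{n-((n+1-i)-j)}=\hat f_{i+j-1}$, exactly the $(i,j)$ block of $H_n(f)$; unitary invariance of the Schatten norms and the measure-preserving change of variables then transfer Theorem~\ref{thm:bound_toeplitz} to the Hankel case.

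Two small remarks for context. First, the identity ``Hankel equals flipped Toeplitz up to a unimodular modulation'' is a classical observation, and you correctly flag that it is specific to the unilevel case: a $d$-level Hankel pattern for $d\ge 2$ cannot be brought to multilevel Toeplitz form by a single tensor-flip, which is precisely why the cited reference — whose title concerns \emph{multilevel} Hankel matrices — develops a direct argument in the spirit of the linear positive operator framework (cf.\ \cite{Se-Ti-LPO}) rather than your reduction. Second, the norm identity $\|h\|_{L^p}=\|f\|_{L^p}$ tacitly uses that the $L^p$ norm for matrix-valued symbols is built on a unitarily invariant pointwise norm (so it is absolutely homogeneous under scalar multiplication by $\E^{\iota n\theta}$) and that $\theta\mapsto-\theta$ preserves Lebesgue measure on $[-\pi,\pi]$; both are standard and match the conventions of Theorem~\ref{thm:bound_toeplitz}, so no gap arises. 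Your proof is a clean, self-contained alternative to the cited one, adequate for the unilevel setting used in this paper.
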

	
	\begin{remark}\label{hankel_continuous}
		{Theorem} \ref{thm:bound_hankel} is also saying that, if $f$ is continuous, the zero clustering described by {Theorem} \ref{thm:fasino-tilli-d=1} is strong.
		In fact, let $f \in C([-\pi,\pi], {\C^{s \times t}})$: if we fix a small $\epsilon > 0$ and consider a trigonometric polynomial $p_\epsilon$, with a certain degree $m_\epsilon$, such that $\| f - p_\epsilon \|_{L^\infty} < \epsilon$, then by linearity of the Hankel operator
		\begin{equation}
			H_n(f)=H_n(p_\epsilon)+H_n(f-p_\epsilon).
		\end{equation}
		{Theorem \ref{thm:bound_hankel} implies} that $\|H_n(f-p_\epsilon)\| \le \epsilon$, and by construction $H_n(p_\epsilon)$ has a rank bounded by a constant $r_\epsilon$ determined by $\epsilon$, but independent of the size of $H_n(f)$. More precisely, $r_\epsilon$ is bounded by the degree of $p_\epsilon$ and this ensures the strong cluster of {$\{H_n(f)\}_n$ at} $0$.
	\end{remark}	
	
	{Next, we briefly outline the main operative properties of the GLT spaces. The full constructive definition can be found in \cite{GLT-blocks-1-dim} and \cite{MR4449208}; for our purposes it is enough to know that} a unilevel $(s,t)$-block GLT sequence $\{A_n\}_n$ is a {$s\times t$}-block matrix-sequence equipped with {an essentially unique} measurable function $\kappa : [0,1] \times [-\pi,\pi] \to \mathbb{C}^{s \times t}$, called GLT symbol, {and we write} $\{A_n\}_n \sim_{\mathrm{GLT}} \kappa$. {Below} we list the {main working} properties.

    \begin{glt} \label{glt1-distributions}
		If $\{A_n\}_n \sim_{\mathrm{GLT}} \kappa$, then $\{A_n\}_n \sim_{\sigma} \kappa$. If moreover each $A_n$ is Hermitian, then $\{A_n\}_n \sim_{\lambda} \kappa$.
    \end{glt}
	\begin{glt} \label{glt2-buildingblocks}
	    The following properties hold.
        \begin{itemize}
		  \item $\{T_n(f)\}_n \sim_{\mathrm{GLT}} \kappa(x,\theta) = f(\theta)$, with $f\in L^1([-\pi,\pi], \C^{s \times t})$.
		  \item $\{Z_n\}_n \sim_{\mathrm{GLT}} \kappa(x,\theta) = 0$ if and only if $\{Z_n\}_n \sim_{\sigma} 0$.
        \end{itemize}
	\end{glt}
    \begin{glt} \label{glt3-algebra}
        Suppose that $\{A_n\}_n \sim_{\mathrm{GLT}} \kappa$ and $\{B_n\}_n \sim_{\mathrm{GLT}} \xi$. Then the following properties hold.
    	\begin{enumerate}
    		\item $\{A_n^*\}_n \sim_{\mathrm{GLT}} \kappa^*$, {where $\cdot^*$ denotes the conjugate transpose}.
    		\item $\{\alpha A_n + \beta B_n\}_n \sim_{\mathrm{GLT}} \alpha \kappa + \beta \xi$ for all $\alpha, \beta \in \mathbb{C}$, if $\kappa+\xi$ is defined.
    		\item $\{A_n B_n\}_n \sim_{\mathrm{GLT}} \kappa \xi$, if $\kappa \xi$ is defined.
    		\item If all the singular values of $\kappa$ are non-zero a.e., then $\{A_n^{\dagger}\}_n \sim_{\mathrm{GLT}} \kappa^{\dagger}$, {where $\cdot^\dagger$ denotes the Moore-Penrose inverse}.
    		\item If $\{A_n\}_n \sim_{\mathrm{GLT}} \kappa$ and each $A_n$ is Hermitian, then $\{f(A_n)\}_n \sim_{\mathrm{GLT}} f(\kappa)$ for every continuous function $f : \mathbb{C} \to \mathbb{C}$.
    	\end{enumerate}
    \end{glt}
To be more precise, in axiom \textbf{GLT \ref{glt3-algebra}}, item 1 and item 4, $\{A_n\}_n$ is unilevel $(s,t)$-block GLT sequence and $\{A_n^*\}_n$ and $\{A_n^{\dagger}\}_n$ are both unilevel $(t,s)$-block GLT sequences. Similarly, in axiom \textbf{GLT \ref{glt3-algebra}}, item 3, $\{A_n\}_n$ unilevel $(s,t)$-block GLT sequence and $\{B_n\}_n$
unilevel $(t,v)$-block GLT sequence imply $\{A_nB_n\}_n$ unilevel $(s,v)$-block GLT sequence. Furthermore, when $s=t$ the corresponding GLT space is a GLT $\ast$-algebra.

	\section{Structured matrices with block unilevel Toeplitz blocks}\label{sec:Toeplitz_block}
	
	{In this section we consider} matrices of the form (\ref{eq:A_general}) in which
	\begin{itemize}
		\item the diagonal blocks $A_{ii}$, {for $i=1,\dots, \nu$,} are $sn_i\times tn_i$ Toeplitz matrices {$T_{n_i}(f_{i,i})$}, {following Definition \ref{def:toeplitz}};
		\item the off-diagonal blocks $A_{ij}$, {for $i\neq j$}, are $sn_{i}\times tn_{j}$ rectangular matrices $T_{n_i,n_j}(f_{i,j})$, according to {Definition~\ref{def:rectangular_toeplitz}};
	\end{itemize}
    where $f_{i,j}$, $i,j=1,\dots,\nu$, are {given} functions {belonging to $L^1([-\pi,\pi],{\C^{s\times t}})$}. More explicitly, the matrix $A_n$ has the form
	\begin{small}
		\begin{equation} \label{eq:A_Toeplitz}
			A_n=\left[
			\begin{tikzpicture}[baseline=(m.center)]
				\matrix (m) [matrix of math nodes,column sep=0.15em,row sep=0.15em] {
					|[draw,dashed, minimum width=2cm, minimum height=2cm]| T_{n_1}(f_{1,1}) & |[draw,dashed, minimum width=2.4cm, minimum height=2cm]| T_{n_1,n_2}(f_{1,2}) & \cdots & |[draw,dashed, minimum width=1.6cm, minimum height=2cm]| T_{n_1,n_{\nu}}(f_{1,{\nu}}) \\
					|[draw,dashed, minimum width=2cm, minimum height=2.4cm]| T_{n_2,n_1}(f_{2,1}) & |[draw,dashed, minimum width=2.4cm, minimum height=2.4cm]| T_{n_2}(f_{2,2}) & \cdots & |[draw,dashed, minimum width=1.6cm, minimum height=2.4cm]| T_{n_2,n_{\nu}}(f_{2,{\nu}}) \\
					\vdots & \vdots & \ddots & \vdots \\
					|[draw,dashed, minimum width=2cm, minimum height=1.6cm]| T_{n_{\nu},n_1}(f_{{\nu},1}) & |[draw,dashed,minimum width=2.4cm, minimum height=1.6cm]| T_{n_{\nu},n_2}(f_{{\nu},2}) & \cdots & |[draw,dashed, minimum width=1.6cm, minimum height=1.6cm]| T_{n_{\nu}}(f_{\nu,\nu}) \\
				};
			\end{tikzpicture}
			\right].
		\end{equation}
	\end{small}

	{We report some useful results on this type of structures, proven and discussed in \cite{prequel}}.
	
	\begin{theorem}[\cite{prequel}]\label{equal-blocks-basic}
		Assume that {a matrix} $A_n$ {structured} as in (\ref{eq:A_Toeplitz}) is given, with $n_1=n_2=\cdots=n_\nu=\frac{n}{\nu}$, {and set} $n(\nu)=\frac{n}{\nu}$. For any positive integer $\mu$ define the following permutation matrix
		\begin{displaymath}
			\Pi_\mu=\Pi_{n,\nu,\mu} = \begin{bmatrix}
				I_\nu \otimes \mathbf{e}_{1}^T \\
				I_\nu \otimes \mathbf{e}_{2}^T \\
				\vdots \\
				I_\nu \otimes \mathbf{e}_{n(\nu)}^T
			\end{bmatrix} \otimes I_\mu,
		\end{displaymath}
such that
		\begin{displaymath}
			T_n(F)=\Pi_s A_n \Pi_t^T,
		\end{displaymath}
		where $F=\left(f_{i,j}\right)_{i,j=1}^\nu$. With the previous assumptions we have
		\begin{displaymath}
			\{A_{n}\}_{{n}}\sim_\sigma~{F}.
		\end{displaymath}
		{If in addition $F$ is Hermitian-valued (which implies $s=t$)}, then
		\begin{displaymath}
			\{A_{n}\}_{{n}}\sim_\lambda~{F}.
		\end{displaymath}
	\end{theorem}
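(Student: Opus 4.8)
The plan is to reduce the whole statement to the matrix-valued Toeplitz distribution result of Theorem~\ref{thm:szego-d=1}, via the permutation identity $T_n(F)=\Pi_s A_n\Pi_t^{T}$ announced above. Concretely, I would: (i) verify that this identity holds with the ``perfect shuffle'' permutations $\Pi_s$ and $\Pi_t$; (ii) observe that $F=(f_{i,j})_{i,j=1}^{\nu}$ belongs to $L^1([-\pi,\pi],\C^{\nu s\times\nu t})$ and apply Theorem~\ref{thm:szego-d=1} to the Toeplitz matrix-sequence $\{T_n(F)\}_n$ (with the obvious reparametrization of the matrix order, $n\mapsto n(\nu)=n/\nu$); (iii) transfer the singular value distribution — and, in the Hermitian case, the eigenvalue distribution — from $\{T_n(F)\}_n$ to $\{A_n\}_n$, using that $\Pi_s$ and $\Pi_t$ are unitary.

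For step (i), note first that the $n\times n$ factor appearing in the definition of $\Pi_\mu$ is a $0$--$1$ matrix with exactly one entry equal to $1$ in each row and in each column, hence orthogonal; tensoring with $I_\mu$ keeps it orthogonal, so $\Pi_\mu^{T}=\Pi_\mu^{-1}$. The key observation is that $T_n(F)$ and $A_n$ are built from the \emph{same} scalars — the entries of the Fourier coefficients $\hat f_{p,q,k}$ — only arranged according to two different orderings of the index set: in $A_n$ one groups first by the block index $(p,q)\in\{1,\dots,\nu\}^2$, then, inside the block $T_{n(\nu)}(f_{p,q})$, by the Toeplitz index, then by the inner $s\times t$ index; in $T_n(F)$, whose Toeplitz block of index $i-j$ is $\hat F_{i-j}=\bigl(\hat f_{p,q,\,i-j}\bigr)_{p,q=1}^{\nu}$, one groups first by the Toeplitz index, then by $(p,q)$, then by the inner index. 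Since $\Pi_s$ (acting on the rows) and $\Pi_t$ (acting on the columns) are exactly the permutations interchanging the first two levels of this ordering — tensored with $I_s$, resp.\ $I_t$, so as to leave the inner blocks intact — a direct index chase gives $T_n(F)=\Pi_s A_n\Pi_t^{T}$, equivalently $A_n=\Pi_s^{T}T_n(F)\Pi_t$. I expect this bookkeeping, in particular in the rectangular case $s\neq t$ where $\Pi_s\neq\Pi_t$ and the row and column relabelings must be tracked separately, to be the only genuinely laborious step.

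Granting the identity, the singular value claim follows at once: since each $f_{i,j}\in L^1([-\pi,\pi],\C^{s\times t})$, the assembled symbol $F$ lies in $L^1([-\pi,\pi],\C^{\nu s\times\nu t})$, so Theorem~\ref{thm:szego-d=1} gives $\{T_n(F)\}_n\sim_\sigma F$. As $\Pi_s$ and $\Pi_t$ are unitary, $T_n(F)$ and $A_n$ have the same singular values (and the same number of them, namely $rn$ with $r=\min\{s,t\}$), so the limit relation \eqref{eq:distribution_sv} — whose right-hand side is already expressed through the $\nu r$ singular values of $F(\boldsymbol{\theta})\in\C^{\nu s\times\nu t}$ — transfers verbatim from $\{T_n(F)\}_n$ to $\{A_n\}_n$; hence $\{A_n\}_n\sim_\sigma F$.

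Finally, assume $F$ is Hermitian-valued. Then $\nu s=\nu t$ forces $s=t$, so $\Pi_s=\Pi_t=:\Pi$ and the identity becomes the unitary similarity $A_n=\Pi^{T}T_n(F)\Pi=\Pi^{-1}T_n(F)\Pi$. Moreover $\hat F_{-k}=\hat F_k^{*}$, so each $T_n(F)$ is Hermitian and Theorem~\ref{thm:szego-d=1} yields $\{T_n(F)\}_n\sim_\lambda F$; since a unitary similarity preserves the eigenvalues, the same transfer argument as above, now applied to \eqref{eq:distribution_eig}, gives $\{A_n\}_n\sim_\lambda F$.
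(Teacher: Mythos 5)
The paper cites this theorem from \cite{prequel} and does not reproduce a proof, but the statement itself supplies the key permutation identity $T_{n(\nu)}(F)=\Pi_s A_n\Pi_t^{T}$, which pins down the intended argument; your proposal follows it exactly: verify the shuffle identity, apply Theorem~\ref{thm:szego-d=1} to the assembled $\nu s\times\nu t$ symbol $F$, and transfer the singular value (and, for Hermitian $F$, eigenvalue) distribution through the orthogonal matrices $\Pi_s,\Pi_t$. Your reasoning is correct, including the reparametrization $n\mapsto n(\nu)$ and the observation that Hermitian $F$ forces $s=t$ so that the identity becomes a unitary similarity.
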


	\begin{theorem}[\cite{prequel}]\label{almost equal-blocks-basic}
		Assume that $A_n$ is {a matrix structured} as in (\ref{eq:A_Toeplitz}), with $n_j=n(\nu)+o(n)$, $j=1,\ldots,\nu$, {and set} $n(\nu)=\frac{n}{\nu}$,  $F=\left(f_{i,j}\right)_{i,j=1}^\nu$. With the previous assumptions we have
		\[
		\{A_{n}\}_{{n}}\sim_\sigma~{F}.
		\]
		{If in addition $F$ is Hermitian-valued (which implies $s=t$), then}
		\[
		\{A_{n}\}_{{n}}\sim_\lambda~{F}.
		\]
	\end{theorem}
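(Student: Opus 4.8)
The plan is to deduce both claims from Theorem~\ref{equal-blocks-basic} by embedding $A_n$ into a genuine ``equal block'' structure of essentially the same size. Since $\nu$ is fixed and $n_j = n(\nu)+o(n)$ for every $j$, the integer $N:=\max_{1\le j\le\nu}n_j$ satisfies $N=n(\nu)+o(n)$; hence $\tilde n:=\nu N$ obeys $\nu\mid\tilde n$, $\tilde n = n+o(n)$, and $\tilde n-n=\sum_{i=1}^{\nu}(N-n_i)=o(n)$. Let $\tilde A_{\tilde n}$ be the matrix of the form~\eqref{eq:A_Toeplitz} with all block sizes equal to $N$, i.e.\ with diagonal blocks $T_N(f_{i,i})$ and off-diagonal blocks $T_N(f_{i,j})$. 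By Theorem~\ref{equal-blocks-basic} (used with index $\tilde n$, along the subsequence $\tilde n(n)\to\infty$) we have $\{\tilde A_{\tilde n}\}_n\sim_\sigma F$, and moreover $\{\tilde A_{\tilde n}\}_n\sim_\lambda F$ whenever $F$ is Hermitian-valued; in that case $s=t$ and $\tilde A_{\tilde n}$ is Hermitian, because $f_{j,i}=f_{i,j}^{*}$ forces $T_N(f_{i,j})^{*}=T_N(f_{j,i})$.

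Next I would compare $A_n$ with $\tilde A_{\tilde n}$. Pad each block $T_{n_i,n_j}(f_{i,j})$ of $A_n$ with zero rows and columns up to size $sN\times tN$; this produces a matrix $\hat A_n$ of size $s\tilde n\times t\tilde n$ that, after a permutation of rows and of columns gathering the genuine entries, equals $\operatorname{diag}(A_n,0)$. Hence the multiset of singular values of $\hat A_n$ is that of $A_n$ together with $r(\tilde n-n)=o(n)$ extra zeros (and, when $s=t$, the analogous statement holds for the eigenvalues); since $n/\tilde n\to1$, a direct manipulation of the Ces\`aro averages in~\eqref{eq:distribution_sv} and~\eqref{eq:distribution_eig} gives $\{A_n\}_n\sim_\sigma g$ (resp.\ $\sim_\lambda g$) iff $\{\hat A_n\}_n\sim_\sigma g$ (resp.\ $\sim_\lambda g$). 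The crucial observation is then that $R_n:=\tilde A_{\tilde n}-\hat A_n$ has small rank: $T_{n_i,n_j}(f_{i,j})$ is exactly the leading $sn_i\times tn_j$ submatrix of $T_N(f_{i,j})$, so the $(i,j)$ block of $R_n$ vanishes on that corner and is supported on at most $s(N-n_i)$ rows and $t(N-n_j)$ columns; summing over the $\nu^2$ blocks yields $\operatorname{rank}(R_n)\le\nu(s+t)\sum_i(N-n_i)=\nu(s+t)(\tilde n-n)=o(\tilde n)$. Writing $\hat A_n=\tilde A_{\tilde n}-R_n$ and using that a perturbation of rank $o(\tilde n)$ does not affect the singular value distribution (a standard consequence of Weyl's interlacing inequalities for singular values; see also Theorem~\ref{thm:zero-characterization}), we get $\{\hat A_n\}_n\sim_\sigma F$, hence $\{A_n\}_n\sim_\sigma F$. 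As an alternative for the singular value part one may note that $A_n$ is the compression $P^{*}\tilde A_{\tilde n}Q$ by sub-identity selection matrices and invoke the extradimensional Theorem~\ref{Extradimensional} (after a transposition, if needed, so that $s\ge t$), the ratio hypothesis being $n/\tilde n\to1$.

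For the spectral statement, assume $F$ is Hermitian-valued, so $s=t=r$. Then $A_n$ is Hermitian (because $T_{n_i,n_j}(f_{i,j})^{*}=T_{n_j,n_i}(f_{j,i})$), hence so is $\hat A_n$, and therefore $R_n=\tilde A_{\tilde n}-\hat A_n$ is a Hermitian perturbation of rank $o(\tilde n)$; since such perturbations displace only $O(\operatorname{rank}R_n)$ eigenvalues (Weyl monotonicity/interlacing inequalities), from $\{\tilde A_{\tilde n}\}_n\sim_\lambda F$ we obtain $\{\hat A_n\}_n\sim_\lambda F$ and then $\{A_n\}_n\sim_\lambda F$ by the zero-padding reduction of the previous paragraph. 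I expect the only real obstacle to be the bookkeeping that enlarging the unequal blocks to the common size $N$ costs merely rank $o(n)$: everything hinges on the nesting identity between $T_{n_i,n_j}(f_{i,j})$ and $T_N(f_{i,j})$ together with $\sum_i(N-n_i)=\tilde n-n=o(n)$. The eigenvalue part is also the only step where one cannot simply quote Theorem~\ref{Extradimensional} (which addresses singular values only) and must instead argue through a Hermitian low-rank perturbation.
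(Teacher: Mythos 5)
Your proof is correct. The key ideas — embed $A_n$ into a genuinely equal-block matrix $\tilde A_{\tilde n}$ with block size $N=\max_j n_j$, invoke Theorem~\ref{equal-blocks-basic} on that structure, and reduce back to $A_n$ by showing the discrepancy is asymptotically negligible — are sound, and the bookkeeping (the nesting of $T_{n_i,n_j}(f_{i,j})$ inside $T_N(f_{i,j})$, the rank bound $\operatorname{rank}(R_n)\le\nu(s+t)(\tilde n-n)=o(\tilde n)$, and the Ces\`aro-average argument showing that appending $o(n)$ zero singular values or eigenvalues does not change the limit) is carried out carefully. You are also right that the Hermitian low-rank perturbation argument is what is needed on the eigenvalue side, since Theorem~\ref{Extradimensional} is a singular-value tool only, and that the implication $f_{j,i}=f_{i,j}^*\Rightarrow T_N(f_{i,j})^*=T_N(f_{j,i})$ guarantees $\tilde A_{\tilde n}$ is Hermitian whenever $F$ is.

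In comparison with the paper: the proof from~\cite{prequel}, as described in Section~3, passes through the auxiliary matrices $\widetilde A_n$, $\widehat A_n$ and the extradimensional compression machinery of Theorem~\ref{Extradimensional}, i.e.\ it reduces $A_n$ to a strictly smaller equal-block matrix by selection, whereas you primarily go the dual way (enlarge to $N=\max_j n_j$, pad with zeros, and absorb the discrepancy as a low-rank perturbation), mentioning the extradimensional compression $A_n=P^*\tilde A_{\tilde n}Q$ only as an alternative. The two routes are essentially equivalent — the extradimensional theorem is itself proved by exactly this kind of low-rank/zero-padding manipulation — but your primary argument is more self-contained and, because it produces an explicit Hermitian low-rank perturbation $R_n$, it handles the eigenvalue case and the singular value case in a single framework, whereas the compression route requires a separate Hermitian argument for $\sim_\lambda$ in any case. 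One stylistic remark: after establishing that $\{R_n\}_n$ is zero-distributed via Theorem~\ref{thm:zero-characterization}, it would be cleaner to invoke the GLT axioms (\textbf{GLT~\ref{glt2-buildingblocks}} and \textbf{GLT~\ref{glt3-algebra}}) rather than appealing informally to Weyl interlacing, since the GLT framework is precisely what the paper uses for this kind of stability-under-zero-distributed-perturbation step.
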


    \begin{theorem}[\cite{prequel}]\label{th:general}
		Suppose that $A_n$ is {a matrix structured} as in (\ref{eq:A_Toeplitz}) with $\nu,n_1,\ldots,n_\nu$ satisfying {Assumptions \ref{assump1} and \ref{assump2}, in which we assume $c_j\in\mathbb{Q}^+$, so that $c_j=\frac{\alpha_j}{\beta_j}$ with $\alpha_j,\beta_j\in\mathbb{N}^+$ for $j=1,\ldots,\nu$. Let
		$F=\left(E_{i,j}\right)_{i,j=1}^\nu$, where
		\[
		E_{j,k} =
		\begin{cases}\label{block_symbols}
			\begin{array}{l l l l}
				I_{m_j}\otimes f_{j,j} & & \text{if } j=k, \\
				& & \\
				\left[
				\begin{array}{c|c}
					I_{m_j}\otimes f_{j,k}& O_{sm_j\times t(m_k-m_j)} \\
				\end{array}
				\right] &  &\text{if } j\neq k, & \text{and } m_k\ge m_j, \\
				& & \\
				\left[
				\begin{array}{c}
					I_{m_k}\otimes f_{j,k}\\
					\hline
					O_{s(m_j-m_k)\times tm_k}
				\end{array} \right]& & \text{if } j\neq k, & \text{and } m_j\ge m_k, \\
			\end{array}
		\end{cases}
		\]
        with $c_j=\frac{m_j}{m}$, $m={\rm lcm}(\beta_1,\ldots,\beta_\nu)$ and $m_j\in\mathbb{N}^+$ computed accordingly}. Then, we have
		\begin{displaymath}
			\{A_{n}\}_{{n}}\sim_\sigma~{F}.
		\end{displaymath}
		If in addition $A_n$ is Hermitian, then
		\begin{displaymath}
			\{A_{n}\}_{{n}}\sim_\lambda~{F}.
		\end{displaymath}
	\end{theorem}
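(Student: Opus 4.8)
The plan is to prove the statement first in the \emph{commensurable} case, in which $n=mN$ and $n_j=m_jN$ for every $j$ (so $N\to\infty$ as $n\to\infty$), and then to remove this restriction by the extradimensional principle.

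\emph{Step 1 (reduction to a block-Toeplitz matrix via a permutation).} In the commensurable case $A_{mN}$ is a $\nu\times\nu$ block matrix whose $(i,j)$ block has size $sm_iN\times tm_jN$. I would refine this into an $m\times m$ block partition with all blocks of size $sN\times tN$, splitting the $i$-th block-row into $m_i$ strips $a=1,\dots,m_i$ and the $j$-th block-column into $m_j$ strips $b=1,\dots,m_j$, and ordering the $m=\sum_i m_i$ strips lexicographically by the pair $(i,a)$. A short computation on Fourier coefficients then shows that the $\big((i,a),(j,b)\big)$ strip of $A_{mN}$ is $T_N\!\big(\E^{\iota(b-a)N\theta}f_{i,j}\big)$: it equals $T_N(f_{i,j})$ when $a=b$ and a ``shifted Toeplitz'' matrix when $a\ne b$. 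Next, I would invoke the permutation identity of Theorem~\ref{equal-blocks-basic} with $\nu$ and $n/\nu$ replaced by $m$ and $N$: it says that the $m\times m$ grid $\big[\delta_{ab}T_N(f_{i,j})\big]$ coincides, after a fixed renumbering of rows and columns, with $T_N(F)$, \emph{provided} the $(a,b)$ sub-block of the $(i,j)$ block of $F$ is $\delta_{ab}f_{i,j}$. Checking this against the definition of $E_{j,k}$ is a short case analysis: for $j=k$, $E_{j,j}=I_{m_j}\otimes f_{j,j}$ has $(a,b)$ sub-block $\delta_{ab}f_{j,j}$; for $j\ne k$, the Kronecker factor $I_{\min(m_j,m_k)}\otimes f_{j,k}$ together with the zero padding makes the $(a,b)$ sub-block equal to $f_{j,k}$ exactly when $a=b$ (which then forces $a=b\le\min(m_j,m_k)$) and $0$ otherwise. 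So, up to the said renumbering,
\begin{equation}\label{eq:plan-decomp}
	A_{mN}=T_N(F)+W_N,
\end{equation}
with $W_N$ the block matrix carrying the shifted-Toeplitz blocks $T_N\!\big(\E^{\iota(b-a)N\theta}f_{i,j}\big)$ in the positions with $a\ne b$ and zeros elsewhere.

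\emph{Step 2 (the cross term is zero-distributed; conclusion of the commensurable case).} Every nonzero block of $W_N$ has the form $T_N(\E^{\iota\ell N\theta}g)$ with $\ell=b-a\ne0$ and $g=f_{i,j}\in L^1([-\pi,\pi],\C^{s\times t})$. I would show $\{T_N(\E^{\iota\ell N\theta}g)\}_N\sim_\sigma0$ as follows: approximating $g$ in $L^1$ by a trigonometric polynomial $p_\varepsilon$ of degree $d_\varepsilon$, the nonzero entries of $T_N(\E^{\iota\ell N\theta}p_\varepsilon)$ lie in a corner block of size at most $d_\varepsilon$ (the matrix being identically zero for large $N$ if $|\ell|\ge2$), so its rank is bounded uniformly in $N$, while by Theorem~\ref{thm:bound_toeplitz} $\|T_N(\E^{\iota\ell N\theta}(g-p_\varepsilon))\|_1\le N\|g-p_\varepsilon\|_{L^1}/(2\pi)$; letting $\varepsilon\to0$ exhibits $\{T_N(\E^{\iota\ell N\theta}p_\varepsilon)\}_N$ as an approximating class of sequences with vanishing (Schatten-$1$, relative to size) error for $\{T_N(\E^{\iota\ell N\theta}g)\}_N$, hence the latter is $\sim_\sigma 0$. (Alternatively, for $|\ell|=1$ this matrix becomes, after reversing a block ordering, a Hankel matrix, so Theorem~\ref{thm:fasino-tilli-d=1}, and Remark~\ref{hankel_continuous} for continuous $g$, apply directly.) Writing each nonzero block of $W_N$ as $R+E$ with $\operatorname{rank}R=o(N)$ and $\|E\|\to0$ (Theorem~\ref{thm:zero-characterization}) and assembling over the constant number ($\le m^2-m$) of such blocks, I get $W_N=R_N+E_N$ with $\operatorname{rank}R_N=o(N)$, $\|E_N\|\to0$, so $\{W_N\}_N\sim_\sigma0$ by Theorem~\ref{thm:zero-characterization}. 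Since $\{T_N(F)\}_N\sim_\sigma F$ by Theorem~\ref{thm:szego-d=1}, combining \eqref{eq:plan-decomp} with the permutation-invariance of singular values and the standard rank-plus-small-norm perturbation lemma for $\sim_\sigma$ gives $\{A_{mN}\}_N\sim_\sigma F$.

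\emph{Step 3 (general block sizes; the Hermitian case).} For general $n_j$ satisfying Assumption~\ref{assump2}, set $N=\max_j\lceil n_j/m_j\rceil$; since $n_j/n\to c_j=m_j/m$, one gets $m_jN\ge n_j$ for all $j$ and $mN=n+o(n)$. By the nesting of Toeplitz matrices, $A_n$ is obtained from $A_{mN}$ by keeping, in the $i$-th block-row, its first $sn_i$ rows and, in the $j$-th block-column, its first $tn_j$ columns, i.e.\ $A_n=P^*A_{mN}Q$ with $P,Q$ built from columns of identities ($P^*P=I$, $Q^*Q=I$) whose column/row ratios tend to $1$; Theorem~\ref{Extradimensional} (applied to $A_{mN}$ or to its transpose) then transfers $\{A_{mN}\}_N\sim_\sigma F$ into $\{A_n\}_n\sim_\sigma F$. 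If in addition $A_n$ is Hermitian, then $s=t$, $f_{i,i}$ is Hermitian-valued and $f_{j,i}=f_{i,j}^*$, so $F$ is Hermitian-valued and $T_N(F)$, $A_{mN}$ are Hermitian; taking the renumbering in \eqref{eq:plan-decomp} to act identically on rows and columns makes $W_N$ Hermitian, hence $\sim_\lambda 0$ as well, and the eigenvalue part of Theorem~\ref{thm:szego-d=1} with the Hermitian perturbation lemma yields $\{A_{mN}\}_N\sim_\lambda F$; taking $Q=P$ above, $A_n=P^*A_{mN}P$ is a principal submatrix of co-size $o(n)$, and Cauchy interlacing upgrades this to $\{A_n\}_n\sim_\lambda F$.

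\emph{Main obstacle.} I expect the hard part to be Step~2: proving the zero-distribution of the shifted-Toeplitz cross blocks $T_N(\E^{\iota\ell N\theta}g)$ for merely integrable $g$ (continuous $g$ being easy, cf.\ Remark~\ref{hankel_continuous}) with a rank-plus-norm control uniform enough that the $O(1)$-fold assembly $W_N$ retains the $o(n)$ rank bound — precisely the point where the Hankel estimates and the approximating-class-of-sequences tools of \cite{prequel,pre-prequel} are essential. A secondary care point is performing Steps~1 and~3 compatibly with the Hermitian structure (identical renumbering of rows and columns, Cauchy interlacing instead of the general extradimensional principle).
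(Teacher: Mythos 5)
Your proposal follows essentially the same route as the paper's proof (which is given in \cite{prequel}), modulo a minor reorganization. The paper likewise invokes Theorem~\ref{Extradimensional} first to reduce to the commensurable case $n_j=c_jn$, then passes through two auxiliary matrices $\widetilde A_n$ and $\widehat A_n$: the first squares the off-diagonal rectangular blocks by zero padding, the second splits each square block into $m_j$ copies of $T_{n(m)}(\cdot)$; the residuals are shown to be zero-distributed, and $\widehat A_n$ is permuted into $T_{n(m)}(F)$ via Theorem~\ref{equal-blocks-basic}. Your Step~1 fuses the two auxiliary constructions into a single $m\times m$ grid refinement and identifies $\widehat A_n$ (equivalently, $D_N$) with the $a=b$ strips and the total residual $A_{mN}-\widehat A_{mN}$ with the $a\ne b$ strips $W_N$. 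Your key identity — that the $(a,b)$ strip equals $T_N(\E^{\iota(b-a)N\theta}f_{i,j})$ — is exactly the mechanism behind the paper's Hankel characterization of the residual (cf.\ the reference to \cite[Lemma~3.2]{prequel} in the proof of Theorem~\ref{proper_cluster}); for $|b-a|=1$ the two views coincide after reversing a block ordering, as you note. So the decomposition is the same up to whether one splits the correction in two stages or handles it in one.

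The one step that needs more care is the conclusion of Step~2. After splitting $T_N(\E^{\iota\ell N\theta}g)=T_N(\E^{\iota\ell N\theta}p_\varepsilon)+T_N(\E^{\iota\ell N\theta}(g-p_\varepsilon))$, you have, for each \emph{fixed} $\varepsilon$, a low-rank part (rank $\le O(d_\varepsilon)$) plus a part with $\|\cdot\|_1\le N\|g-p_\varepsilon\|_{L^1}/(2\pi)$. This does \emph{not} give the decomposition of Theorem~\ref{thm:zero-characterization} for a fixed $\varepsilon$, since the spectral norm of the second part need not tend to $0$, nor does Theorem~\ref{thm:zero-char2} apply, since $\|\cdot\|_1=O(N)$, not $o(N)$. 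One must either run a genuine a.c.s.\ argument (as you gesture at: the a.c.s.\ limit of zero-distributed sequences is zero-distributed), or make the singular-value counting explicit — the number of singular values of $T_N(\E^{\iota\ell N\theta}g)$ exceeding $\delta>0$ is at most $O(d_\varepsilon)+N\|g-p_\varepsilon\|_{L^1}/(2\pi\delta)$, and letting $N\to\infty$ then $\varepsilon\to0$ gives the $o(N)$ bound. Either way the conclusion is correct; just be explicit that Theorem~\ref{thm:zero-characterization} is applied only \emph{after} zero-distribution of each block has been established by the two-parameter argument, not as part of it. Your Hermitian handling in Step~3 (choosing $Q=P$ and upgrading via Cauchy interlacing) is a valid and self-contained alternative to the paper's appeal to the GLT axioms.
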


	{The latter theorem is proved in \cite{prequel} by introducing two} auxiliary matrices $\widetilde{A}_n$, $\widehat{A}_n$ {designed to neglect terms in $A_n$ that lead to zero-distributed sequences. Following a similar approach,} we set
	\begin{small}
		\begin{equation} \label{eq:tilde A_Toeplitz}
			\widetilde A_n=\left[
			\begin{tikzpicture}[baseline=(m.center)]
				\matrix (m) [matrix of math nodes,column sep=0.15em,row sep=0.15em] {
					|[draw,dashed, minimum width=2cm, minimum height=2cm]| T_{n_1}(f_{1,1}) & |[draw,dashed, minimum width=2.4cm, minimum height=2cm]| \widetilde T_{n_1,n_2} & \cdots & |[draw,dashed, minimum width=1.6cm, minimum height=2cm]| \widetilde T_{n_1,n_{\nu}} \\
					|[draw,dashed, minimum width=2cm, minimum height=2.4cm]| \widetilde T_{n_2,n_1} & |[draw,dashed, minimum width=2.4cm, minimum height=2.4cm]| T_{n_2}(f_{2,2}) & \cdots & |[draw,dashed, minimum width=1.6cm, minimum height=2.4cm]| \widetilde T_{n_2,n_{\nu}} \\
					\vdots & \vdots & \ddots & \vdots \\
					|[draw,dashed, minimum width=2cm, minimum height=1.6cm]| \widetilde T_{n_{\nu},n_1} & |[draw,dashed,minimum width=2.4cm, minimum height=1.6cm]| \widetilde T_{n_{\nu},n_2} & \cdots & |[draw,dashed, minimum width=1.6cm, minimum height=1.6cm]| T_{n_{\nu}}(f_{\nu,\nu}) \\
				};
			\end{tikzpicture}
			\right],
		\end{equation}
	\end{small}
{where the off-diagonal blocks are defined as
    \begin{alignat*}{2}
        \widetilde T_{n_i,n_j} &= \left[\begin{array}{c}
			T_{n_j}(f_{i,j}) \\[2pt]
			\hline
			O_{s(n_i-n_j)\times tn_j}
		\end{array} \right]
        && \quad\text{if } n_i > n_j, \\[5pt]
        \widetilde T_{n_i,n_j} &= \Big[
		\begin{array}{c|c}
			T_{n_i}(f_{i,j}) & O_{sn_i\times t(n_j-n_i)} \\
		\end{array}
		\Big]
        && \quad\text{if } n_i < n_j, \\[5pt]
        \widetilde T_{n_i,n_j} &= T_{n_i}(f_{i,j})
		        && \quad\text{if } n_i = n_j.
    \end{alignat*}
Now, we assume that the sizes $n_1,\ldots,n_\nu$ can be written as ${n_j=m_j n_{m}+o(n_{m})}$, with $m$ a fixed positive integer, and define}
	\begin{small}
		\begin{equation} \label{eq:hat A_Toeplitz}
			\widehat A_n=\left[
			\begin{tikzpicture}[baseline=(m.center)]
				\matrix (m) [matrix of math nodes,column sep=0.15em,row sep=0.15em] {
					|[draw,dashed, minimum width=2cm, minimum height=2cm]| \widehat T_{n_1} & |[draw,dashed, minimum width=2.4cm, minimum height=2cm]| \widehat T_{n_1,n_2} & \cdots & |[draw,dashed, minimum width=1.6cm, minimum height=2cm]| \widehat T_{n_1,n_{\nu}} \\
					|[draw,dashed, minimum width=2cm, minimum height=2.4cm]| \widehat T_{n_2,n_1} & |[draw,dashed, minimum width=2.4cm, minimum height=2.4cm]| \widehat T_{n_2} & \cdots & |[draw,dashed, minimum width=1.6cm, minimum height=2.4cm]| \widehat T_{n_2,n_{\nu}} \\
					\vdots & \vdots & \ddots & \vdots \\
					|[draw,dashed, minimum width=2cm, minimum height=1.6cm]| \widehat T_{n_{\nu},n_1} & |[draw,dashed,minimum width=2.4cm, minimum height=1.6cm]| \widehat T_{n_{\nu},n_2} & \cdots & |[draw,dashed, minimum width=1.6cm, minimum height=1.6cm]| \widehat T_{n_{\nu}} \\
				};
			\end{tikzpicture}
			\right],
		\end{equation}
	\end{small}
{which in turn is based on the structure of $\widetilde A_n$. { Essentially, considering the block $(j,k)$ in (\ref{eq:tilde A_Toeplitz}) with $n_j \le n_k$, the matrix $T_{n_{j}}(f)$ in the considered block is essentially replaced by $m_j$ copies of $T_{n_{m}}(f)$, that is,
\[
\left[I_{m_j-1} \otimes T_{n_{m}}(f)\right] \oplus X_{n_{m},j,k}
\]
and $X_{n_{m},j,k}$ is a matrix of size $n_m + o(n_m)$, obtained by neglecting, if the sign of the $o(n_m)$ term is negative, or adding, if the sign of the $o(n_m)$ term is positive, the last $o(n_m)$ rows and columns in $T_{n_{m}}(f)$. Similarly, considering the block $(j,k)$ in (\ref{eq:tilde A_Toeplitz}) with $n_k \le n_j$, the matrix $T_{n_{k}}(f)$ in the considered block is essentially replaced by $m_k$ copies of $T_{n_{m}}(f)$, that is,
\[
\left[I_{m_k-1} \otimes T_{n_{m}}(f)\right] \oplus X_{n_{m},j,k}
\]
and $X_{n_{m},j,k}$ is a matrix of size $n_m + o(n_m)$, obtained by neglecting, if the sign of the $o(n_m)$ term is negative, or adding, if the sign of the $o(n_m)$ term is positive, the last $o(n_m)$ rows and columns $o(n_m)$ in $T_{n_{m}}(f)$.
}}\\
In \cite{prequel}, it is proven that
	\begin{align}\label{1st zero-distr}
		\{A_{n}-\widetilde A_n\}_n&\sim_\sigma 0, \\ \label{2nd zero-distr}
		\{\widetilde A_{n}-\widehat A_n\}_n&\sim_\sigma 0.
	\end{align}
	{In particular,} $\{A_{n}-\widetilde A_n\}_{{n}}$ {and $\{\widetilde A_{n}-\widehat A_n\}_{{n}}$} can be expressed {respectively} as the sum of $\nu^2-\nu$ {and $\nu(m^2-m)$} zero-distributed sequences, with {$m$ as given in Theorem \ref{th:general}}.





	\section{Approximation and preconditioning}\label{sec:appr-prec}
	
    {Here we propose a strategy to construct a preconditioner $S_n$ for a structure $A_n$ as in \eqref{eq:A_Toeplitz}. We start from \( \widehat{A}_n \) defined in \eqref{eq:hat A_Toeplitz} and replace some terms with suitable approximations:} each Toeplitz block {$T_{n_{m}}(f_{i,j})$} is substituted with a matrix {\( S_{n_{m}}(f_{i,j}) \)} and each diagonal remainder  $X_{n_{m},j,k}$ with some { $R_{n_{m},j,k}$}, {so that zero-distributed sequences are obtained by subtraction}. To be more precise, {consider matrices $S_{n_{m}}(f_{i,j})$ and $R_{n_{m},j,k}$ chosen so that they satisfy the conditions
    \begin{align*}
        \{S_{n_{m}}(f_{i,j}) - T_{n_{m}}(f_{i,j})\}_{n_{m}} &\sim_{\sigma}~{0}, \\
         \{X_{n_{m},j,k} - R_{n_{m},j,k}\}_{n_{m}} &\sim_\sigma~0.
    \end{align*}
    Assuming once again that ${n_j = m_j n_{m} + o(n_j)}$, with $m$ a fixed positive integer, we define
	\begin{small}
		\begin{equation} \label{eq:S_n}
			{S_n}=\left[
			\begin{tikzpicture}[baseline=(m.center)]
				\matrix (m) [matrix of math nodes,column sep=0.15em,row sep=0.15em] {
					|[draw,dashed, minimum width=2cm, minimum height=2cm]|  {\widehat S_{n_1}} & |[draw,dashed, minimum width=2.4cm, minimum height=2cm]|  {\widehat S_{n_1,n_2}} & \cdots & |[draw,dashed, minimum width=1.6cm, minimum height=2cm]| {\widehat S_{n_1,n_{\nu}}} \\
					|[draw,dashed, minimum width=2cm, minimum height=2.4cm]| {\widehat S_{n_2,n_1}} & |[draw,dashed, minimum width=2.4cm, minimum height=2.4cm]| {\widehat S_{n_2}} & \cdots & |[draw,dashed, minimum width=1.6cm, minimum height=2.4cm]| {\widehat S_{n_2,n_{\nu}}} \\
					\vdots & \vdots & \ddots & \vdots \\
					|[draw,dashed, minimum width=2cm, minimum height=1.6cm]| {\widehat S_{n_{\nu},n_1}} & |[draw,dashed,minimum width=2.4cm, minimum height=1.6cm]| {\widehat S_{n_{\nu},n_2}} & \cdots & |[draw,dashed, minimum width=1.6cm, minimum height=1.6cm]| {\widehat S_{n_{\nu}}} \\
				};
			\end{tikzpicture}
			\right],
		\end{equation}
	\end{small}
    
    in which the blocks are given by $\widehat S_{n_i} = S_{n_i}(f_{i,i})$ and
    \begin{alignat*}{2}
        \widehat S_{n_i,n_j} &= \left[ \begin{array}{c}
			S_{n_j}(f_{i,j}) \\[2pt]
			\hline
			O_{s(n_i-n_j)\times tn_j}
		\end{array} \right]
        && \quad\text{if } n_i > n_j, \\[5pt]
        \widehat S_{n_i,n_j} &= \Big[
		\begin{array}{c|c}
			S_{n_i}(f_{i,j}) & O_{sn\times t(n_j-n_i)} \\
		\end{array}
		\Big]
        && \quad\text{if } n_i < n_j, \\[5pt]
        \widehat S_{n_i,n_j} &=
			S_{n_i}(f_{i,j}) && \quad\text{if } n_i = n_j,
    \end{alignat*}
    where, if $n_j \le n_k$
	\begin{equation}\label{approximation_S1}
		{S_{n_j(f_{i,j})} = \left[I_{m_j-1} \otimes S_{n_{m}}(f_{i,j})\right] \oplus R_{n_{m},j,k}},
	\end{equation}
    and, if $n_k \le n_j$,
    \begin{equation}\label{approximation_S2}
		{S_{n_j(f_{i,j})} = \left[I_{m_k-1} \otimes S_{n_{m}}(f_{i,j})\right] \oplus R_{n_{m},j,k}},
	\end{equation}
and ${R_{n_{m},j,k}}$ is a matrix of size ${n_m + o(n_m)}$ obtained by neglecting or adding ${o(n_m)}$ rows and columns in $S_{n_{m}}(f_{i,j})$.

    To demonstrate the approximation quality of \( S_n \), we aim to prove that
	\begin{equation}\label{0-distr}
		\{S_n - A_n\}_n \sim_\sigma~0,
	\end{equation}
	and also to establish appropriate conditions under which
	\begin{equation}\label{1-cluster}
		\{S_n^{\dagger}A_n\}_n \sim_{\sigma}~{1}, \quad \text{or} \quad \{A_nS_n^{\dagger}\}_n \sim_{\sigma}~{1}.
	\end{equation}
	
	\begin{theorem}\label{cluster_difference}
		{Given \( A_n \) as in \eqref{eq:A_Toeplitz}, suppose that the sizes \( n_j \), \( j = 1, \ldots, \nu \), satisfy Assumptions \ref{assump1} and \ref{assump2}, in which we assume $c_j=\frac{\alpha_j}{\beta_j}$ with $\alpha_j,\beta_j\in\mathbb{N}^+$ for $j=1,\ldots,\nu$. Then, it holds
        \begin{equation*}
            \{S_n - A_n\}_n \sim_{\sigma}~{0},
        \end{equation*}
        where \( {S}_n \) is defined in (\ref{eq:S_n}). If moreover $S_n$ and $A_n$ are Hermitian for all $n$, then
        \begin{equation*}
            \{S_n - A_n\}_n \sim_{\lambda}~{0}.
        \end{equation*}}
	\end{theorem}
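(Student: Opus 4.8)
The strategy is to decompose the difference $S_n - A_n$ into a sum of zero-distributed sequences and then invoke the standard stability results for the $\sim_\sigma$ relation under sums of zero-distributed perturbations. Concretely, I would write
$S_n - A_n = (S_n - \widehat A_n) + (\widehat A_n - \widetilde A_n) + (\widetilde A_n - A_n)$,
where $\widetilde A_n$ and $\widehat A_n$ are the auxiliary matrices from \eqref{eq:tilde A_Toeplitz} and \eqref{eq:hat A_Toeplitz}. By \eqref{1st zero-distr} and \eqref{2nd zero-distr}, the last two summands are each a finite sum ($\nu^2-\nu$ and $\nu(m^2-m)$ terms respectively, with $m$ as in Theorem \ref{th:general}) of zero-distributed sequences, hence zero-distributed themselves. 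So the whole proof reduces to showing $\{S_n - \widehat A_n\}_n \sim_\sigma 0$.

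For the remaining term, I would exploit the block structure of $S_n$ in \eqref{eq:S_n} versus $\widehat A_n$ in \eqref{eq:hat A_Toeplitz}: the two matrices have exactly the same block-partition geometry, and $S_n - \widehat A_n$ is again a block matrix whose $(i,j)$ block is obtained by subtracting the corresponding blocks. Inside each such block one finds (up to the trivial zero padding, which does not affect rank or norm) a direct sum of $m_j-1$ (or $m_k-1$) copies of $S_{n_m}(f_{i,j}) - T_{n_m}(f_{i,j})$ together with a single copy of $R_{n_m,j,k} - X_{n_m,j,k}$. By hypothesis both $\{S_{n_m}(f_{i,j}) - T_{n_m}(f_{i,j})\}_{n_m}\sim_\sigma 0$ and $\{R_{n_m,j,k} - X_{n_m,j,k}\}_{n_m}\sim_\sigma 0$, so via Theorem \ref{thm:zero-characterization} each summand splits as $R_{n_m}+N_{n_m}$ with $\operatorname{rank}(R_{n_m})=o(n_m)$ and $\|N_{n_m}\|\to 0$. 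Taking direct sums of a number of blocks that is bounded independently of $n$ (namely at most $m_j\le m$ per block, and $\nu^2$ blocks), the ranks add to $o(n)$ and the spectral norms stay uniformly small, so $S_n - \widehat A_n$ itself decomposes as low-rank-plus-small-norm, i.e. $\{S_n-\widehat A_n\}_n\sim_\sigma 0$ again by Theorem \ref{thm:zero-characterization}. One then concludes $\{S_n-A_n\}_n\sim_\sigma 0$ by the fact that the sum of finitely many zero-distributed matrix-sequences (of the same sizes) is zero-distributed.

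For the Hermitian addendum, once we know $\{S_n-A_n\}_n\sim_\sigma 0$ and each $S_n-A_n$ is Hermitian, the zero singular value distribution forces the zero eigenvalue distribution: this is the classical fact that for Hermitian matrix-sequences $\sim_\sigma 0$ implies $\sim_\lambda 0$, which follows because $|\lambda_j(S_n-A_n)| = \sigma_j(S_n-A_n)$ and a symmetric real measure supported (in the Weyl sense) at $0$ in absolute value must be the Dirac delta at $0$. Alternatively, one can phrase the whole argument in GLT language: by GLT \ref{glt2-buildingblocks} the zero-distributed blocks are GLT with symbol $0$, assembling them block-by-block (after the permutation of Theorem \ref{equal-blocks-basic}) keeps the symbol $0$, and then GLT \ref{glt1-distributions} gives both $\sim_\sigma 0$ and, in the Hermitian case, $\sim_\lambda 0$ at once.

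\textbf{Main obstacle.}
The delicate point is the bookkeeping in the term $\{S_n - \widehat A_n\}_n$: one must check that the padding-induced discrepancies (the $o(n_m)$ extra or missing rows/columns encoded in $X_{n_m,j,k}$ and $R_{n_m,j,k}$) are genuinely absorbed into the $o(n)$-rank part, and that the number of direct-summands and blocks being combined does \emph{not} grow with $n$ — it is controlled by $\nu$ and $m$, which are fixed by Assumptions \ref{assump1}--\ref{assump2}. Once this uniform boundedness is in place, the rank-plus-norm estimates are additive and the conclusion is immediate; but getting the indexing of the blocks $(j,k)$, the role of $n_m$, and the two cases $n_j\le n_k$ versus $n_k\le n_j$ consistent with \eqref{approximation_S1}--\eqref{approximation_S2} is where the care is needed.
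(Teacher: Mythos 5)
Your proof plan is correct and follows essentially the same route as the paper's argument: decompose $S_n - A_n$ through the intermediaries $\widetilde A_n$ and $\widehat A_n$, invoke \eqref{1st zero-distr}--\eqref{2nd zero-distr} for the latter two differences, and then use the characterization of Theorem~\ref{thm:zero-characterization} block-by-block (splitting each block difference into a low-rank part plus a small-norm part, and exploiting the fact that the number of blocks and of direct summands per block is bounded by constants depending only on $\nu$ and $m$) to conclude $\{S_n - \widehat A_n\}_n \sim_\sigma 0$. The Hermitian addendum is also handled in the same spirit: both your ``direct'' argument via $|\lambda_j| = \sigma_j$ for Hermitian matrices and your GLT alternative are valid, and the paper uses the GLT route.

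The one organizational difference is that the paper first invokes the extradimensional Theorem~\ref{Extradimensional} to reduce without loss of generality to $n_i = c_i n$, which turns $\widehat A_n$ and $S_n$ into the clean block-Toeplitz forms \eqref{eq:A_Toeplitz-equal}--\eqref{eq:S_Toeplitz-equal} with no remainder pieces, and then applies the rank-plus-norm splitting to the $\nu^2$ block differences $S_{n(m)}(E_{i,j}) - T_{n(m)}(E_{i,j})$. You instead work directly with the general block structure, carrying along the remainder terms $X_{n_m,j,k}$ and $R_{n_m,j,k}$ and invoking the hypothesis $\{X_{n_m,j,k} - R_{n_m,j,k}\}_{n_m}\sim_\sigma 0$. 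Both are correct; the paper's reduction buys cleaner indexing at the cost of appealing to Theorem~\ref{Extradimensional}, while your version avoids that appeal but incurs exactly the bookkeeping you flag as the ``main obstacle.'' Either way the conclusion follows by the same additivity of ranks and spectral norms over a bounded number of summands.
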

	\begin{proof}
		Employing the extra-dimensional approach outlined in Theorem~\ref{Extradimensional}, we can assume, without loss of generality, that the size of each block is $n_i=c_i n$. {Using the notation from Theorem \ref{block_symbols}, where $c_i=\frac{m_i}{m}$ with $m={\rm lcm}(\beta_1,\ldots,\beta_\nu)$, the matrices $\widehat A_n$ in \eqref{eq:hat A_Toeplitz} and $S_n$ take} the following straightforward forms
		\begin{small}
			\begin{equation}\label{eq:A_Toeplitz-equal}
				\widehat A_n=\left[
				\begin{tikzpicture}[baseline=(m.center)]
					\matrix (m) [matrix of math nodes,column sep=0.15em,row sep=0.15em] {
						|[draw,dashed, minimum width=2cm, minimum height=2cm]| T_{n(m)}(E_{1,1}) & |[draw,dashed, minimum width=2.4cm, minimum height=2cm]| T_{n(m)}(E_{1,2}) & \cdots & |[draw,dashed, minimum width=1.6cm, minimum height=2cm]| T_{n(m)}(E_{1,{\nu}}) \\
						|[draw,dashed, minimum width=2cm, minimum height=2.4cm]| T_{n(m)}(E_{2,1}) & |[draw,dashed, minimum width=2.4cm, minimum height=2.4cm]| T_{n(m)}(E_{2,2}) & \cdots & |[draw,dashed, minimum width=1.6cm, minimum height=2.4cm]| T_{n(m)}(E_{2,{\nu}}) \\
						\vdots & \vdots & \ddots & \vdots \\
						|[draw,dashed, minimum width=2cm, minimum height=1.6cm]| T_{n(m)}(E_{{\nu},1}) & |[draw,dashed,minimum width=2.4cm, minimum height=1.6cm]| T_{n(m)}(E_{{\nu},2}) & \cdots & |[draw,dashed, minimum width=1.6cm, minimum height=1.6cm]| T_{n(m)}(E_{\nu,\nu}) \\
					};
				\end{tikzpicture}
				\right]
			\end{equation}
		\end{small}
		\noindent and
		\begin{small}
			\begin{equation}\label{eq:S_Toeplitz-equal}
				S_n=\left[\begin{tikzpicture}[baseline=(m.center)]
					\matrix (m) [matrix of math nodes,column sep=0.15em,row sep=0.15em] {
						|[draw,dashed, minimum width=2cm, minimum height=2cm]| S_{n(m)}(E_{1,1}) & |[draw,dashed, minimum width=2.4cm, minimum height=2cm]| S_{n(m)}(E_{1,2}) & \cdots & |[draw,dashed, minimum width=1.6cm, minimum height=2cm]| S_{n(m)}(E_{1,{\nu}}) \\
						|[draw,dashed, minimum width=2cm, minimum height=2.4cm]| S_{n(m)}(E_{2,1}) & |[draw,dashed, minimum width=2.4cm, minimum height=2.4cm]| S_{n(m)}(E_{2,2}) & \cdots & |[draw,dashed, minimum width=1.6cm, minimum height=2.4cm]| S_{n(m)}(E_{2,{\nu}}) \\
						\vdots & \vdots & \ddots & \vdots \\
						|[draw,dashed, minimum width=2cm, minimum height=1.6cm]| S_{n(m)}(E_{{\nu},1}) & |[draw,dashed,minimum width=2.4cm, minimum height=1.6cm]| S_{n(m)}(E_{{\nu},2}) & \cdots & |[draw,dashed, minimum width=1.6cm, minimum height=1.6cm]| S_{n(m)}(E_{\nu,\nu}) \\
					};
				\end{tikzpicture}
				\right],
			\end{equation}
		\end{small}
		where $n(m)=\frac{n}{m}$ and where the functions $E_{j,k}$, $j,k=1,\ldots,\nu$, are those defined explicitly in Theorem \ref{th:general}. Next, we break down the difference $S_n - A_n$ into two parts
		\[
		S_n - A_n = (S_n - \widehat{A}_n) + (\widehat{A}_n - A_n).
		\]
		If both $\{S_n - \widehat{A}_n\}_n$ and $\{\widehat{A}_n - A_n\}_n$ are zero-distributed in terms of singular values, then we conclude that $\{S_n - A_n\}_n \sim_\sigma 0$. By recalling (\ref{1st zero-distr}) and (\ref{2nd zero-distr}), we already know that
		\[
		\{\widehat{A}_n - A_n\}_n \sim_\sigma 0.
		\]
		Thus we need to prove that $\{S_n - \widehat{A}_n\}_n \sim_\sigma 0$.
		By construction, we have $\{L_{n(m)}(E_{i,j})\}_n := \{S_{n(m)}(E_{i,j}) - T_{n(m)}(E_{i,j})\}_n \sim_{\sigma}~{0}$. Using Theorem~\ref{thm:zero-characterization}, for each sequence $\{L_{n(m)}(E_{i,j})\}_n$ there exist matrix-sequences  $\{R_{n(m)}^{(i,j)}\}_n$ and $\{N_{n(m)}^{(i,j)}\}_n$ such that
		\[
		L_{n(m)}(E_{i,j}) = R_{n(m)}^{(i,j)} + N_{n(m)}^{(i,j)},
		\]
		where
		\[
		{\operatorname{rank} \big(R_{n(m)}^{(i,j)}\big)} = o(n(m)) \quad \text{and} \quad \lim_{n \to \infty} \|N_{n(m)}^{(i,j)}\| = 0.
		\]
		Next, we define
		\[
		R_n = \begin{bmatrix}
			R_{n(m)}^{(1,1)} & \dots & R_{n(m)}^{(1,\nu)} \\
			\vdots & \ddots & \vdots \\
			R_{n(m)}^{(\nu,1)} & \dots & R_{n(m)}^{(\nu,\nu)}
		\end{bmatrix}, \quad
		{N_{n}} = \begin{bmatrix}
			N_{n(m)}^{(1,1)} & \dots & N_{n(m)}^{(1,\nu)} \\
			\vdots & \ddots & \vdots \\
			N_{n(m)}^{(\nu,1)} & \dots & N_{n(m)}^{(\nu,\nu)}
		\end{bmatrix}.
		\]
		Hence
		\[
		S_n - \widehat{A}_n = R_n + N_n.
		\]
		We prove the theorem using {the elementary block decompositions} $R_n=\sum_{i,j=1}^{\nu}El_{i,j} \otimes R_{n(m)}^{(i,j)}$ and $N_n=\sum_{i,j=1}^{\nu}El_{i,j} \otimes N_{n(m)}^{(i,j)}$, with $El_{i,j} \in \mathbb{C}^{\nu \times \nu}$ as the $(i,j)$-elemental matrix, with all zero entries except one in position $(i,j)$.
		Indeed, note that the number of blocks $\nu^2$ is fixed by hypothesis and by sublinearity. {Hence,}
		\[
		\operatorname{rank}(R_n) \leq \sum_{i,j=1}^\nu \operatorname{rank}(R_{n(m)}^{(i,j)}),
		\]
		and
		\[
		\lim_{n(m) \to \infty} \frac{\operatorname{rank}(R_{n(m)}^{(i,j)})}{n(m)} = 0.
		\]
		Therefore
		\[
		\lim_{n \to \infty} \frac{\operatorname{rank}(R_n)}{n} = 0.
		\]
		In a similar way, we have
		\[
		\lim_{n \to \infty} \|N_n\| \leq \lim_{n(m) \to \infty} \sum_{i,j=1}^\nu \|N_n(m)^{(i,j)}\|=0,
		\]
		which implies
		\[
		\lim_{n \to \infty} \|N_n\| = 0.
		\]
		By combining the properties of $R_n$ and $N_n$, we conclude
		\[
		\{S_n - \widehat{A}_n\}_n \sim_\sigma~{0}.
		\]
		Given that $\{\widehat{A}_n - A_n\}_n$ and $\{S_n - \widehat{A}_n\}_n$ are zero-distributed in terms of singular values, it follows that
		\[
		\{S_n - A_n\}_n = \{(S_n - \widehat{A}_n) + (\widehat{A}_n - A_n)\}_n \sim_\sigma~{0}.
		\]
		If $\{S_n\}_n$ and $\{A_n\}_n$ are also Hermitian, then the claim on the eigenvalues clustering follows by axioms \textbf{GLT \ref{glt1-distributions}} and \textbf{GLT \ref{glt3-algebra}}, since $\{S_n - A_n\}_n$ is a zero-distributed Hermitian matrix-sequence.
	\end{proof}
	We also have a strong clustering result based on the continuity of the spectral symbol of $A_n$ and the strong clustering of the blocks of $\widehat{A}_n$ and $S_n$.
	
	\begin{theorem}\label{proper_cluster}
		Suppose $A_n$, $\widehat{A}_n$, and $S_n$ have the block structure {described} in detail in Theorem \ref{cluster_difference}. Assume further that $\{\widehat{T}_{n^{'}}-S_{n^{'}}\}_{n^{'}}$ and $\{X_{n^{'}}-R_{n^{'}}\}_{n^{'}}$ have a strong cluster {at} $0$ in the singular values sense, where $X_{n'}$ and $R_{n'}$ are the remainders described in (\ref{eq:hat A_Toeplitz}), (\ref{approximation_S1}) and (\ref{approximation_S2}),  and
		\begin{equation}
			\{A_n\}_n \sim_{\sigma}~{F},
		\end{equation}
		with $F$ continuous. Then $\{S_n - A_n\}_n$ has a strong {singular value cluster at} 0.
	\end{theorem}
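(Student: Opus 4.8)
The plan is to mirror the decomposition already used in the proof of Theorem~\ref{cluster_difference}, writing
\[
S_n - A_n = (S_n - \widehat A_n) + (\widehat A_n - A_n),
\]
but now upgrading every ``weak'' statement to a ``strong'' one by invoking Remark~\ref{hankel_continuous} and Theorem~\ref{thm:bound_hankel} in place of the bare zero-distribution results. First I would recall from \cite{prequel} that both $\{A_n-\widehat A_n\}_n$ and $\{\widehat A_n-\widetilde A_n\}_n$ (equivalently $\{\widehat A_n - A_n\}_n$) are finite sums --- $\nu^2-\nu$ and $\nu(m^2-m)$ terms, respectively --- of zero-distributed sequences each of which is, up to permutations and zero-padding, a Hankel matrix $H_{n(m)}(f_{i,j})$ or a difference $X_{n(m)} - \widehat T_{n(m)}$ of the kind described after \eqref{eq:hat A_Toeplitz}. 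Since $F$ is continuous, all the entries $f_{i,j}$ are continuous (the blocks of a continuous matrix-valued function are continuous componentwise), so Remark~\ref{hankel_continuous} applies to each Hankel summand and gives a \emph{strong} cluster at $0$; the remainder pieces $X_{n(m)} - \widehat T_{n(m)}$ are low-rank corrections (they differ from a genuine Toeplitz block in only $o(n(m))$ rows/columns, but more is true: they are literally bounded-rank perturbations once the block sizes are taken equal to $c_i n$ via Theorem~\ref{Extradimensional}), hence also strongly clustered at $0$.

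Next I would combine these finitely many strong clusters. The key structural fact is that a strong cluster at $0$ in the singular-value sense is exactly the statement that $Z_n = R_n + N_n$ with $\operatorname{rank}(R_n) = O(1)$ and $\|N_n\|\to 0$ (the strong-cluster analogue of Theorem~\ref{thm:zero-characterization}, which follows from the min-max characterization of singular values). Writing each summand in this form and adding, the ranks add (still $O(1)$, since there are $\nu^2-\nu+\nu(m^2-m)$ of them, a number fixed independently of $n$) and the small-norm parts add (norm still $\to 0$, by the triangle inequality and the fixed number of terms). Embedding these $n(m)\times n(m)$ block-level decompositions into the full $sn\times tn$ matrix via the elemental matrices $El_{i,j}$, exactly as in the proof of Theorem~\ref{cluster_difference}, keeps the rank bounded by a constant and the spectral norm vanishing, because $\operatorname{rank}(El_{i,j}\otimes R) = \operatorname{rank}(R)$ and $\|El_{i,j}\otimes N\| = \|N\|$. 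This yields $\{\widehat A_n - A_n\}_n$ with a strong cluster at $0$.

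For the term $\{S_n - \widehat A_n\}_n$ I would argue identically: by hypothesis $\{\widehat T_{n'} - S_{n'}\}_{n'}$ and $\{X_{n'} - R_{n'}\}_{n'}$ have strong clusters at $0$, and $S_n - \widehat A_n$ is (after the extra-dimensional reduction to equal block sizes) a sum of $\nu^2$ blocks each of the form $S_{n(m)}(E_{i,j}) - T_{n(m)}(E_{i,j})$, which in turn decomposes according to \eqref{approximation_S1}--\eqref{approximation_S2} into $m_j-1$ copies of $\widehat T_{n(m)} - S_{n(m)}$ plus one copy of $X_{n(m)} - R_{n(m)}$; all of these are strongly clustered at $0$ by assumption, and their number is again fixed independently of $n$. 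Applying the same rank-plus-small-norm bookkeeping gives a strong cluster at $0$ for $\{S_n - \widehat A_n\}_n$. Finally, the sum of two sequences each with a strong cluster at $0$ has a strong cluster at $0$ (same rank-addition/norm-addition argument), so $\{S_n - A_n\}_n$ has a strong singular value cluster at $0$, as claimed.

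The main obstacle I anticipate is bookkeeping rather than anything deep: one must be careful that the \emph{number} of zero-distributed/strongly-clustered summands appearing in the expansions of $A_n - \widehat A_n$, $\widehat A_n - \widetilde A_n$ (or $\widehat A_n - A_n$), and $S_n - \widehat A_n$ is genuinely independent of $n$ --- this is where Assumption~\ref{assump1} ($\nu$ fixed) and the fixed value of $m = \operatorname{lcm}(\beta_1,\dots,\beta_\nu)$ are essential --- and that the zero-padding and permutation operations used to realize each block as a (padded) Hankel matrix or a bounded-rank correction neither inflate the rank beyond $O(1)$ nor the norm of the small part. A secondary point to handle cleanly is justifying the strong-cluster version of Theorem~\ref{thm:zero-characterization}; this is standard (interlacing/min--max), and it is implicit already in Remark~\ref{hankel_continuous}, so I would simply state it and cite the interlacing inequalities.
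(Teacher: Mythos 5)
Your overall decomposition $S_n-A_n=(S_n-\widehat A_n)+(\widehat A_n-A_n)$ and the idea of upgrading each zero-distributed summand to a strong cluster is exactly the paper's strategy, and your treatment of the first-stage (Hankel) discrepancy via Remark~\ref{hankel_continuous} is correct. However, there is a genuine gap in your handling of the second-stage discrepancy $\widetilde A_n-\widehat A_n$, where the Toeplitz block $T_{n_j}(f)$ is replaced by a block-diagonal matrix $I_{m_j}\otimes T_{n(m)}(f)$. You assert that after the extra-dimensional reduction these ``remainder pieces'' become \emph{literal bounded-rank perturbations}; this is false for a general continuous $f$. The discrepancy consists of the off-block-diagonal blocks of $T_{n_j}(f)$, which are $n(m)\times n(m)$ Toeplitz matrices built from the \emph{shifted} Fourier coefficients $\hat f_{k\pm\ell\,n(m)}$. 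These are full matrices whose rank grows with $n$ whenever $f$ is not a trigonometric polynomial, so they are \emph{not} $O(1)$-rank. The continuity hypothesis on $F$ is needed precisely here: one must approximate $f$ by a trigonometric polynomial $p_\epsilon$ with $\|f-p_\epsilon\|_{L^\infty}<\epsilon$, so that $T_{n_j}(p_\epsilon)-\widehat T_{n_j}(p_\epsilon)$ has rank bounded by a constant depending only on $\deg p_\epsilon$, while $\|T_{n_j}(f-p_\epsilon)-\widehat T_{n_j}(f-p_\epsilon)\|\le 2\|f-p_\epsilon\|_{L^\infty}$ by Theorem~\ref{thm:bound_toeplitz}. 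This is the analogue, for the Toeplitz stage, of what Remark~\ref{hankel_continuous} does for the Hankel stage, and it cannot be bypassed.

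A related, secondary inaccuracy: the ``strong-cluster version of Theorem~\ref{thm:zero-characterization}'' you invoke should be $\epsilon$-quantified --- for every $\epsilon>0$ there is a splitting $Z_n=R_n^{(\epsilon)}+N_n^{(\epsilon)}$ with $\operatorname{rank}(R_n^{(\epsilon)})\le r_\epsilon$ (the bound depending on $\epsilon$ but not on $n$) and $\|N_n^{(\epsilon)}\|\le\epsilon$. Your formulation with a single splitting $Z_n=R_n+N_n$, $\operatorname{rank}(R_n)=O(1)$, $\|N_n\|\to 0$, is strictly stronger than strong clustering and is \emph{not} what the approximation argument provides: the rank of the low-rank part necessarily depends on the approximation quality $\epsilon$. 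Once you switch to the $\epsilon$-quantified form, your rank/norm bookkeeping over a fixed number of summands does go through, since the number of terms ($\nu^2-\nu+\nu(m^2-m)$ for $\widehat A_n-A_n$, plus $\nu^2$ for $S_n-\widehat A_n$) is independent of $n$. With the polynomial-approximation step inserted for the Toeplitz stage and the quantifier corrected, your argument matches the paper's.
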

	\begin{proof}
		The proof follows similar lines to the proof of Theorem~\ref{cluster_difference}. We aim to prove that $\{\widehat{A}_n -A_n\}_{{n}}$ and $\{S_n - \widehat{A}_n\}_{{n}}$ have a singular value cluster at $0$. First we show that \( \{K_n\}_{{n}} \), $ K_n= A_n - \widehat{A}_n$, is strongly clustered at $0$ in terms of singular values. This can be seen by inspecting the approximation process to obtain $\widehat{A}_n$ from $A_n$. Indeed, recall that \( \widehat{A}_n \) is constructed from \( A_n \) through two approximation stages:
		\begin{enumerate}
			\item The approximation of off diagonal blocks by adjusting off diagonal Toeplitz block sizes resulting in \( \widetilde{A}_n \);
			\item The approximation process of square Toeplitz matrices to form \( \widehat{A}_n \) from \( \widetilde{A}_n\).
		\end{enumerate}
		Therefore, if $K_n^{(1)}$ is the discrepancy given by the first step and $K_n^{(2)}$ the discrepancy given by the second step, then
		\begin{displaymath}
			K_n = A_n - \widehat{A}_n = (A_n - \widetilde{A}_n) + (\widetilde{A}_n - \widehat{A}_n) = K_n^{(1)} + K_n^{(2)}.
		\end{displaymath}
		The approximation processes are described in detail in (\ref{eq:tilde A_Toeplitz}) and (\ref{eq:hat A_Toeplitz}). Consider the first stage: the difference between the original and adjusted blocks is
		\begin{displaymath}
			K_{n_i,n_j}^{(1)} = T_{n_i,n_j}(f_{i,j}) - \widetilde{T}_{n_i,n_j}(f_{i,j}).
		\end{displaymath}
		
		The discrepancy \( K_{n_i,n_j}^{(1)} \) is non-zero exclusively in the sections of the blocks where zeros have been inserted. It was observed in the proof of \cite[Lemma~3.2]{prequel} that these sections correspond either to the expressions $(Y_{n_i} \otimes I_s) X_{n_i,n_j}(f_{i,j})$ or $X_{n_i,n_j}(f_{i,j}) (Y_{n_j} \otimes I_t)$. Here, \( Y_{n_i} \) and \( Y_{n_j} \) are block exchange matrices, while \( X_{n_i,n_j}(f_{i,j}) \) represents a submatrix of the rectangular Hankel matrix \( H_{(n_i-n_j),n_j}(f_{i,j}) \) or \( H_{n_i,(n_j-n_i)}(f_{i,j}) \).
		Applying Remark \ref{hankel_continuous} 
        and the continuity of $f_{i,j}$, we deduce that the singular values of matrix-sequences of form $\{K_{n_i,n_j}^{(1)}\}_{{n}}$ strongly cluster at $0$. Therefore, \( \{K_n\}_{{n}} \) has also a strong cluster at $0$ using the elementary block decomposition as used in Theorem \ref{cluster_difference}.

		During the second phase, the blocks \( T_{n_j}(f) \) within \( \widetilde{A}_n \) are approximated by employing \( {m_j} \) copies of smaller Toeplitz matrices \( {T_{n_m}(f)} \) related to the same symbol. This estimation relies on the hypothesis \( {n_j = m_j n_{m} + o(n_m)} \). Further explanation of this approximation is provided in (\ref{eq:hat A_Toeplitz}).

        {Since} $f$ is a fixed trigonometric polynomial of degree $r$, {the} discrepancies are limited to small corner sections between Toeplitz blocks, resulting in a low-rank matrix dependent on $r$ rather than $n$. Assume {that} $f$ is continuous and, for a given $\epsilon > 0$, consider a trigonometric polynomial $p_\epsilon$ with $\| f - p_{\epsilon} \|_{L^\infty} < \epsilon$. Given that $\widehat{T}_{n_j}(f)$ is a block diagonal matrix with Toeplitz blocks, the linearity of Toeplitz operators implies
		\begin{align}\label{K2-approx}
			K_{n_j}^{(2)} &= T_{n_j}(f) - \widehat{T}_{n_j}(f)=\\
			&=\left[T_{n_j}(p_{\epsilon}) - \widehat{T}_{n_j}(p_{\epsilon})  \right] + \left[T_{n_j}(f -p_{\epsilon}) - \widehat{T}_{n_j}(f - p_{\epsilon})  \right].
		\end{align}
		
		Applying the preceding remark and Theorem~\ref{thm:bound_toeplitz}, (\ref{K2-approx}) takes the form
		\begin{equation}
			K_{n_j}^{(2)}=R_{n_j}^{(\epsilon)}+N_{n_j}^{(\epsilon)},   
		\end{equation}
where the rank of $R_{n_j}^{(\epsilon)}$ depends solely on $\epsilon$, with $\lim_{\epsilon \to 0} \|N_{n_j}^{(\epsilon)}\| = 0$
and $\lim_{n_j \to \infty} \|N_{n_j}^{(\epsilon)}\| = \| f - p_{\epsilon} \|_{L^\infty}$. This suffices to see that $\{K_{n_j}^{(2)}\}_{{n}}$ has a strong singular value cluster at $0$ and consequently $\{K_n^{(2)}\}_{{n}}$ as well. Therefore, the discrepancy \( \{K_n\}_{{n}}\}_{{n}} \) is representable as a sum of matrix-sequences strongly clustered at $0$ in singular value.\\
		The proof that $\{S_n - \widehat{A}_n \}_n$ is strongly clustered at $0$ parallels that of Theorem~\ref{cluster_difference}. If $\widehat{A}_n$ and $S_n$ have block remainders, note that $\{S_n - \widehat{A}_n\}_{{n}}$ contains strongly zero-distributed blocks by hypothesis. Thus, we can use the arguments of elementary sum decomposition from Theorem~\ref{cluster_difference}.
	\end{proof}
	
	\begin{theorem}\label{cluster_inverse}
		Assume the same hypothesis and definitions of $A_n$ and $S_n$ as in Theorem~\ref{cluster_difference}. Furthermore, assume that $\{A_n\}_n \sim_{\sigma}~{F}$, where the structure of $F$ is described in Theorem~\ref{th:general},  and all singular values of $F$ are non-zero almost everywhere. The following implications hold:
		\begin{enumerate}
			\item If $s \ge t,$ then
			\begin{equation}
				\{S_n^{\dag}A_n\}_n \sim_{\sigma}~{1}.
			\end{equation}
			
			\item If $s \le t,$ then
			\begin{equation}
				\{A_nS_n^{\dag}\}_n \sim_{\sigma}~{1}.
			\end{equation}
		\end{enumerate}
		Furthermore, if $\{A_n\}_n$ is Hermitian matrix-sequence and $\{S_n \}_n$ is Hermitian and positive definite matrix-sequence, then $\{S_n^{-1}A_n\}_n \sim_{\lambda}~{1}$.
	\end{theorem}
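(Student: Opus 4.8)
The plan is to carry out the whole argument inside the (rectangular, block) GLT framework, reducing the three claims to the trivial distributional facts that a full-rank symbol $F$ satisfies $F^{\dagger}F=I$ (resp. $FF^{\dagger}=I$, $F^{-1/2}FF^{-1/2}=I$) almost everywhere. The key preliminary fact is Theorem~\ref{cluster_difference}: $\{S_n-A_n\}_n\sim_{\sigma}0$, i.e. $\{S_n-A_n\}_n$ is zero-distributed, hence $\{S_n-A_n\}_n\sim_{\mathrm{GLT}}0$ by \textbf{GLT \ref{glt2-buildingblocks}}. So it suffices to identify $\{A_n\}_n$ and $\{S_n\}_n$ as block GLT sequences with one common symbol of full rank a.e. To do this I would, exactly as in the proof of Theorem~\ref{cluster_difference}, invoke the extra-dimensional Theorem~\ref{Extradimensional} to reduce to the balanced case $n_i=c_in$, in which $\widehat A_n$ is the array \eqref{eq:A_Toeplitz-equal} of Toeplitz blocks $T_{n(m)}(E_{i,j})$ sharing the common inner order $n(m)=n/m$; collapsing this array by the permutation of Theorem~\ref{equal-blocks-basic}, adapted to the non-uniform pixel sizes $sm_i\times tm_j$, yields $\Pi_s\widehat A_n\Pi_t^{T}=T_{n(m)}(F)$ with $F=(E_{i,j})_{i,j=1}^{\nu}\in\C^{sm\times tm}$ the symbol of Theorem~\ref{th:general}. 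Hence $\{\Pi_s\widehat A_n\Pi_t^{T}\}_n\sim_{\mathrm{GLT}}F$ by \textbf{GLT \ref{glt2-buildingblocks}}, and writing $\Pi_sA_n\Pi_t^{T}=T_{n(m)}(F)+\Pi_s(A_n-\widehat A_n)\Pi_t^{T}$ and $\Pi_sS_n\Pi_t^{T}=T_{n(m)}(F)+\Pi_s(S_n-\widehat A_n)\Pi_t^{T}$, where both correction terms are zero-distributed (by \eqref{1st zero-distr}--\eqref{2nd zero-distr} and by the proof of Theorem~\ref{cluster_difference}) hence $\sim_{\mathrm{GLT}}0$, \textbf{GLT \ref{glt3-algebra}} gives $\{\Pi_sA_n\Pi_t^{T}\}_n\sim_{\mathrm{GLT}}F$ and $\{\Pi_sS_n\Pi_t^{T}\}_n\sim_{\mathrm{GLT}}F$.

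For the singular value claims I would now use that, by hypothesis, all singular values of $F$ are nonzero a.e. If $s\ge t$ then $F(\theta)\in\C^{sm\times tm}$ has full column rank $tm$ a.e., so $F^{\dagger}F=I_{tm}$ a.e.; \textbf{GLT \ref{glt3-algebra}} gives $\{(\Pi_sS_n\Pi_t^{T})^{\dagger}\}_n=\{\Pi_tS_n^{\dagger}\Pi_s^{T}\}_n\sim_{\mathrm{GLT}}F^{\dagger}$ and then $\{\Pi_tS_n^{\dagger}A_n\Pi_t^{T}\}_n=\{(\Pi_tS_n^{\dagger}\Pi_s^{T})(\Pi_sA_n\Pi_t^{T})\}_n\sim_{\mathrm{GLT}}F^{\dagger}F=I_{tm}$, whence $\sim_{\sigma}I_{tm}$ by \textbf{GLT \ref{glt1-distributions}}. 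Since a unitary conjugation does not change the singular values and all singular values of $I_{tm}$ equal $1$, this is $\{S_n^{\dagger}A_n\}_n\sim_{\sigma}1$. The case $s\le t$ is identical with $FF^{\dagger}=I_{sm}$ a.e., giving $\{A_nS_n^{\dagger}\}_n\sim_{\sigma}1$.

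For the eigenvalue claim, with $A_n$ Hermitian (so $s=t$) and $S_n$ Hermitian positive definite, the common symbol $F$ is Hermitian (essential uniqueness applied to $\{A_n\}_n=\{A_n^{*}\}_n$) and, since $S_n$ is positive definite, $F$ is Hermitian positive definite a.e. I would then pass to the Hermitian matrix $P_n:=S_n^{-1/2}A_nS_n^{-1/2}$, which is similar to $S_n^{-1}A_n$, hence has the same eigenvalues. Via \textbf{GLT \ref{glt3-algebra}} (functional calculus for a continuous branch of $\sqrt{\cdot}$ on the nonnegative reals, where the relevant spectra lie, followed by inversion of the resulting full-rank symbol) one obtains $\{S_n^{-1/2}\}_n\sim_{\mathrm{GLT}}F^{-1/2}$, and two further applications of \textbf{GLT \ref{glt3-algebra}} give $\{P_n\}_n\sim_{\mathrm{GLT}}F^{-1/2}FF^{-1/2}=I_{sm}$. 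As $P_n$ is Hermitian, \textbf{GLT \ref{glt1-distributions}} upgrades this to $\{P_n\}_n\sim_{\lambda}I_{sm}$, i.e. $\sim_{\lambda}1$, and by similarity $\{S_n^{-1}A_n\}_n\sim_{\lambda}1$.

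The main obstacle is the GLT identification in the first step, and in particular making the reduction to the balanced case $n_i=c_in$ fully legitimate: Theorem~\ref{Extradimensional} transfers the relation $\sim_\sigma$ but not $\sim_{\mathrm{GLT}}$, and the product $S_n^{\dagger}A_n$ is not literally a compression of its balanced counterpart, so one must argue at the level of the permuted, balanced structures and then reassemble using the zero-distributed corrections of \eqref{1st zero-distr}--\eqref{2nd zero-distr} and of Theorem~\ref{cluster_difference} — equivalently, one should check that the block structures $A_n$ and $S_n$ are (after the permutation) block GLT sequences for general $n_i$ as well, so that the GLT computation needs no reduction at all. Once $\{A_n\}_n,\{S_n\}_n\sim_{\mathrm{GLT}}F$ with $F$ full rank a.e. is in hand, the rest is a short, mechanical chain of GLT axioms; the only remaining care is the standard functional-calculus technicality for $\sqrt{\cdot}$ in the Hermitian case and the observation that $F^{\dagger}F$, $FF^{\dagger}$ and $F^{-1/2}FF^{-1/2}$ all equal the identity a.e. precisely because $F$ has full rank a.e.
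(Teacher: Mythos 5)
Your proposal follows essentially the same route as the paper's proof: reduce to balanced sizes via the extra-dimensional argument, collapse the block structure by the permutation of Theorem~\ref{equal-blocks-basic} to get $T_{n(m)}(F)$ plus zero-distributed corrections, identify the permuted $A_n$ and $S_n$ as GLT sequences with the common full-rank symbol $F$, and conclude via the GLT axioms (Moore--Penrose inverse for the singular value claim, and symmetrization through $S_n^{-1/2}A_nS_n^{-1/2}$ with functional calculus for the eigenvalue claim). The subtlety you flag about Theorem~\ref{Extradimensional} transferring $\sim_\sigma$ but not $\sim_{\mathrm{GLT}}$ is a fair concern, but the paper glosses over it in the same way, so your argument is at the same level of rigor as the original.
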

	\begin{proof}
		Suppose we have the first case $s\ge t$. The proof of the second case is equal. First, we assume again without loss of generality that $n_j=c_jn$, thus $\widehat{A}_n$ and $S_n$ have the structure detailed respectively in (\ref{eq:A_Toeplitz-equal}) and (\ref{eq:S_Toeplitz-equal}), by using the extra-dimensional argument.

		In Theorem~\ref{cluster_difference}, it was shown that
		\begin{equation}
			\{S_n-A_n\}_n \sim_{\sigma}~{0},
		\end{equation}
		and, naturally, we have $ \{\widehat{A}_n-A_n\}_n \sim_{\sigma}~{0}$ and $ \{\widehat{A}_n-S_n\}_n \sim_{\sigma}~{0}.$
		Thus, by using the characterization of zero-distributed sequences, we can write,
		\begin{align}\label{zero_perturbation}
			S_n&=\widehat{A}_n+R_n+Z_n, \\ \label{zero_perturbation 2}
			A_n&=\widehat{A}_n+\overline{R}_n+\overline{Z}_n,
		\end{align}
		where
		\begin{align}
			&\lim_{n \to \infty} \frac{R_n}{n}=\lim_{n \to \infty} \frac{\overline{R_n}}{n}=0\\
			&\lim_{n \to \infty} \|Z_n\| = \lim_{n \to \infty} \|\overline{Z}_n\| = 0.
		\end{align}
		Let $D_n= \Pi_s C_n \Pi_t^T$ and $G_n= \Pi_s A_n \Pi_t^T$. By applying Theorem \ref{equal-blocks-basic} and by exploiting (\ref{zero_perturbation})-(\ref{zero_perturbation 2}), we obtain
		\begin{align}
			&D_n=T_n(F)+\Pi_s R_n \Pi_t^T + \Pi_s Z_n \Pi_t^T,\\
			&G_n=T_n(F)+\Pi_s \overline{R}_n \Pi_t^T + \Pi_s \overline{Z}_n \Pi_t^T.
		\end{align}
		It is important to note that $\Pi_s R_n \Pi_t^T + \Pi_s Z_n \Pi_t^T$ and $\Pi_s \overline{R}_n \Pi_t^T + \Pi_s \overline{Z}_n \Pi_t^T$ are again zero-distributed. Since $\{T_n(F)\}_n \sim_{\rm GLT} F$, by axiom \textbf{GLT \ref{glt2-buildingblocks}}, using  Theorem~\ref{thm:zero-characterization} and axiom \textbf{GLT \ref{glt3-algebra}}, we also have
		\begin{equation}
			\{D_n\}_n \sim_{\mathrm{GLT}}~{F}\ \  \ \text{ and }\ \ \ \{G_n\}_n \sim_{\mathrm{GLT}}~{F}.
		\end{equation}
		By hypothesis, $F$ has non zero singular values almost everywhere, thus we can apply axiom \textbf{GLT \ref{glt3-algebra}} once again, obtaining
		\begin{equation}
			\{D_n^{\dag} G_n \}_n \sim_{\mathrm{GLT}}~{I_t}.
		\end{equation}
		By axiom \textbf{GLT \ref{glt1-distributions}} we have $$ \{D_n^{\dag} G_n \}_n \sim_{\sigma} I_t.$$
		The singular value distribution described by the identity matrix symbol signifies a (weak) clustering at $1$, thus we can write $\{D_n^{\dag} G_n \}_n \sim_{\sigma} 1.$

		Finally, $\{\Pi_tD_n^{\dag} G_n\Pi_t^T\}_n = \{S_n^{\dag} A_n\}_n \sim_{\sigma} 1$, this is what we wanted to prove.
		The claim on the eigenvalue distribution follows again by algebraic manipulation of $D_n$ and $G_n$ since the related matrix-sequences are of GLT type. Indeed if $S_n$ and $A_n$ are Hermitian and Hermitian positive definite, respectively, then the same character is inherited by $D_n$ and $G_n$, respectively. Therefore we can write
		\begin{equation}
			\{D_n^{-1} G_n \}_n =  \{D_n^{-\frac{1}{2}} G_n D_n^{-\frac{1}{2}} \}_n \sim_{\lambda}~{I_{t}}.
		\end{equation}
		By using axiom \textbf{GLT \ref{glt3-algebra}} and by invoking the fact {that} $\{D_n^{-\frac{1}{2}} G_n D_n^{-\frac{1}{2}} \}_n$, $\{D_n\}_n$ and $\{G_n\}_n$ are Hermitian sequences, by similarity arguments, we directly infer $\{S_n^{\dag} A_n\}_n \sim_{\lambda}~{1}$.
	\end{proof}
	
	\section{Numerical Tests}\label{sec:numerical_tests}
	The present section is divided into three parts. While the first two deal with various $\nu,s,t$, {and} polynomial generating functions, so that each block in (\ref{eq:A_Toeplitz}) is Toeplitz and banded, the examples in the third part concern dense matrices coming from the approximation of fractional operators. Furthermore, the last part {also contains} a two-level example coming from fractional operators in two dimensions, {thus} opening the door to generalizations of the findings of the previous sections to the multilevel setting: we emphasize that so far even the basic spectral analysis of matrix-sequences of the form (\ref{eq:A_general}) where the blocks show multilevel structures is an open problem not covered in \cite{pre-prequel,prequel,conj-blo-I}.
	
	\subsection{Matrix-sequences and example groups}\label{sec:matrix_sequences}
	In the present section, we define the matrix-sequences $\{A_n\}_n$ used in our numerical experiments. For each $n$, $A_n$ is a block structure with Toeplitz blocks as described in (\ref{eq:A_Toeplitz}). We use the same set of selected examples, for experimental reproducibility purposes. We categorize them into three distinct groups based on the block structure and the nature of the generating functions. These groups cover various configurations that satisfy the assumptions outlined at the beginning of Section \ref{sec:general_block}.
	
	\begin{enumerate}
		\item \textbf{Group 1: $2 \times 2$ block case with scalar-valued $f_{i,j}$}.\\
		We have $\nu=2$, $t=s=1$, where each Toeplitz block is generated by scalar-valued trigonometric polynomials $f_{i,j}$. The sizes of the Toeplitz blocks are selected as
		\begin{enumerate}
			\item $n_1 = 3\eta$, $n_2 = 2\eta$;
			\item $n_1 = 3\eta$, $n_2 = 2\eta + 20$;
			\item $n_1 = 3\eta$, $n_2 = 2\eta + \lceil{\sqrt{\eta}\,}\rceil$.
		\end{enumerate}
		
		\item \textbf{Group 2: $2 \times 2$ block case with rectangular matrix-valued $f_{i,j}$}.\\
		We have $\nu=2$, $t=1, s=2$, where each Toeplitz block is generated by matrix-valued trigonometric polynomials $f_{i,j}$. The sizes of the Toeplitz blocks are selected as
		\begin{enumerate}
			\item $n_1 = \eta$, $n_2 = 2\eta$;
			\item $n_1 = \eta$, $n_2 = 2\eta+2$;
			\item $n_1 = \eta$, $n_2 = 2\eta + \lceil{\sqrt{\eta}\,}\rceil$.
		\end{enumerate}
		
		\item \textbf{Group 3: $3 \times 3$ block case with square matrix-valued $f_{i,j}$}.\\
		We have $\nu=3$, $t=s=2$, where each Toeplitz block is generated by matrix-valued trigonometric polynomials $f_{i,j}$. The sizes of the Toeplitz blocks are
		\begin{displaymath}
			n_1 = \eta, \quad n_2 = \frac{\eta}{2}, \quad n_3 = 2\eta - 2.
		\end{displaymath}
	\end{enumerate}
	
	\paragraph{Group 1: $2 \times 2$ block case with scalar-valued $f_{i,j}$.}
	For the setting $\nu=2$, $t=1$, $s=1$, we consider the following trigonometric polynomials:
	\begin{equation}\label{gruppo1_t1s1}
		\begin{split}
			&f_{1,1}(t) =  2 - 2\cos(t); \quad f_{2,2}(t) =  2 - 2\cos(t) - 6\cos(2t); \\
			&f_{1,2}(t) =  1 - 2\cos(t); \quad f_{2,1}(t) = 1 - 2\cos(t).
		\end{split}
	\end{equation}
According to the notation in (\ref{eq:A_Toeplitz}), the block matrix $A_n$ is constructed as
	\begin{displaymath}
		A_n = \begin{bmatrix}
			T_{n_1}(f_{1,1}) & T_{n_1,n_2}(f_{1,2}) \\
			T_{n_2,n_1}(f_{2,1}) & T_{n_2}(f_{2,2})
		\end{bmatrix}.
	\end{displaymath}

	\paragraph{Group 2: $2 \times 2$ block case with rectangular matrix-valued $f_{i,j}$.}
	For the setting $\nu=2$, $s=1$, $t=2$, we consider the following rectangular matrix-valued trigonometric polynomials
	
	\begin{equation}\label{gruppo2_t2s1}
		\begin{split}
			&f_{1,1}(t) = \begin{pmatrix} 2 - 2\cos(t), & 4 + 6\cos(2t) \end{pmatrix};\\
			&f_{2,2}(t) = \begin{pmatrix} 3 + 2\cos(t), & 4 + 6\cos(t) - 2\cos(2t) \end{pmatrix}; \\
			&f_{1,2}(t) = \begin{pmatrix} 1 + {\rm e}^{\iota t}, & 1 - {\rm e}^{-\iota t} \end{pmatrix}; \quad f_{2,1}(t) = f_{1,2}(t).
		\end{split}
	\end{equation}
According to the notation in (\ref{eq:A_Toeplitz}), the block matrix $A_n$ is constructed as
	\begin{displaymath}
		A_n =
		\begin{bmatrix}
			T_{n_1}(f_{1,1}) & T_{n_1,n_2}(f_{1,2}) \\
			T_{n_2,n_1}(f_{2,1}) & T_{n_2}(f_{2,2})
		\end{bmatrix}.
	\end{displaymath}

	\paragraph{Group 3: $3 \times 3$ block case with square matrix-valued $f_{i,j}$}
	For the setting $\nu=3$, $t=2$, $s=2$, we consider matrix-valued functions $f_{i,j}$ and the associated Toeplitz matrices $T_{n_i,n_j}(f_{i,j})$. These functions are related to the discretization of differential operators. More in detail, we consider the 1D constant coefficient second-order differential equation
	
	\begin{small}
		\begin{equation}\label{FEM_problem}
			\begin{cases}
				u''(x) = \psi(x) & \text{on } (0,1), \\
				u(0) = u(1) = 0,
			\end{cases}
		\end{equation}
	\end{small}
	where $\psi(x) \in L^2\left(0,1\right)$.
	
	We select structures and functions from the following contexts:
	\begin{enumerate}
		\item \textbf{Finite Differences (FD) Discretization:} The FD discretization of (\ref{FEM_problem}) leads to the Toeplitz matrix generated by $f(\theta) = 2 - 2\cos\theta$. However, as discussed in \cite[Remark 1.3]{MR3543002}, the symbol is not unique and by viewing the matrix in $2\times 2$ blocks, the corresponding $2\times 2$ matrix-valued symbol is given by \cite{huckle}:
		\begin{displaymath}
			f^{[2]}(\theta) = \hat{f}_0^{[2]} + \hat{f}_{-1}^{[2]} {\rm e}^{-\iota\theta} + \hat{f}_1^{[2]} {\rm e}^{\iota\theta},
		\end{displaymath}
		where
		\begin{displaymath}
			\hat{f}_0^{[2]} = T_2(2 - 2\cos\theta), \quad \hat{f}_{-1}^{[2]} = -e_d e_1^T, \quad \hat{f}_1^{[2]} = (\hat{f}_{-1}^{[2]})^T = -e_1 e_d^T.
		\end{displaymath}
		
		\item \textbf{$\mathbb{Q}_2$ Lagrangian Finite Element Method (FEM) Approximation:} The scaled stiffness matrix is a Toeplitz matrix generated by ${f}_{\mathbb{Q}_2}(\theta)$ \cite{qp}:
		\begin{equation*}
			\begin{split}
				f_{\mathbb{Q}_2}(\theta) = \frac{1}{3} \left(
				\begin{bmatrix}
					16 & -8 \\
					-8 & 14
				\end{bmatrix} +
				\begin{bmatrix}
					0 & -8 \\
					0 & 1
				\end{bmatrix} {\rm e}^{\iota\theta} +
				\begin{bmatrix}
					0 & 0 \\
					-8 & 1
				\end{bmatrix} {\rm e}^{-\iota\theta}
				\right).
			\end{split}
		\end{equation*}

		\item \textbf{B-Spline Discretization:} We consider B-Spline discretizations for different degrees $p$ and regularities $k$. Specifically, we examine the pairs $(p,k) = (2,0)$ and $(3,1)$. The resulting structures $A_{p,k}$ are low-rank corrections of Toeplitz matrices generated by the following functions \cite{tom}:
		\begin{align*}
			&f^{(2,0)}(\theta) = \frac{1}{3} \left(
			\begin{bmatrix}
				4 & -2 \\
				-2 & 8
			\end{bmatrix} +
			\begin{bmatrix}
				0 & -2 \\
				0 & -2
			\end{bmatrix} {\rm e}^{\iota\theta} +
			\begin{bmatrix}
				0 & 0 \\
				-2 & -2
			\end{bmatrix} {\rm e}^{-\iota\theta}
			\right), \\
			&f^{(3,1)}(\theta) = \frac{1}{40} \left(
			\begin{bmatrix}
				48 & 0 \\
				0 & 48
			\end{bmatrix} +
			\begin{bmatrix}
				-15 & -15 \\
				-3 & -1
			\end{bmatrix} {\rm e}^{\iota\theta} +
			\begin{bmatrix}
				-15 & -3 \\
				-15 & -15
			\end{bmatrix} {\rm e}^{-\iota\theta}
			\right).
		\end{align*}
		
		\item \textbf{Multigrid Grid Transfer Operator:} We utilize the classical multigrid grid transfer operator derived from the geometric approach for finite differences and finite elements \cite{multi,MR4389580,braess,MR4284081,FRTBS}. More specifically, for degree 2, the optimal algebraic Two Grid Method (TGM) employs $P_{n,k}^{2}$ associated with
		\begin{displaymath}
			p_{\mathbb{Q}_2}(\theta) =
			\begin{bmatrix}
				\frac{3}{4} & \frac{3}{8} \\
				0 & 1
			\end{bmatrix} +
			\begin{bmatrix}
				0 & \frac{3}{8} \\
				0 & 0
			\end{bmatrix} {\rm e}^{\iota\theta} +
			\begin{bmatrix}
				\frac{3}{4} & -\frac{1}{8} \\
				1 & 0
			\end{bmatrix} {\rm e}^{-\iota\theta} +
			\begin{bmatrix}
				0 & -\frac{1}{8} \\
				0 & 0
			\end{bmatrix} {\rm e}^{-2\iota\theta}.
		\end{displaymath}
	\end{enumerate}
	
Combining the previous four functions, we construct the matrix-sequence $\{A_n\}_n$ for Group 3 as follows
	\begin{displaymath}
		A_n = \begin{bmatrix}
			T_{n_1}(f_{1,1}) & T_{n_1,n_2}(f_{1,2}) & T_{n_1,n_3}(f_{1,3}) \\
			T_{n_2,n_1}(f_{2,1}) & T_{n_2}(f_{2,2}) & T_{n_2,n_3}(f_{2,3}) \\
			T_{n_3,n_1}(f_{3,1}) & T_{n_3,n_2}(f_{3,2}) & T_{n_3}(f_{3,3})
		\end{bmatrix},
	\end{displaymath}
	where
	\begin{equation}\label{eq:example_PDE1}
		\begin{split}
			& f_{1,1}(\theta) = f_{\mathbb{Q}_2}(\theta), \quad f_{1,2}(\theta) = f_{2,1}(\theta) = f^{(2,0)}(\theta), \quad f_{1,3}(\theta) = f_{3,1}(\theta) = f^{(3,1)}(\theta); \\
			& f_{2,2}(\theta) = p_{\mathbb{Q}_2}^*(\theta) p_{\mathbb{Q}_2}(\theta) + p_{\mathbb{Q}_2}^*(\theta + \pi) p_{\mathbb{Q}_2}(\theta + \pi); \\
			& f_{2,3}(\theta) = f_{3,2}(\theta) = p_{\mathbb{Q}_2}(\theta); \quad f_{3,3}(\theta) = f^{[2]}(\theta).
		\end{split}
	\end{equation}
	
	\subsection{Numerical results on distribution, clustering, and efficiency of preconditioning}\label{sec:numerical_results}
	
	In this section, we provide numerical evidence supporting the clustering results discussed in Section \ref{sec:appr-prec}. We consider preconditioners belonging to the circulant class and of Strang type since the considered Toeplitz structures are banded, but also the Frobenius optimal choice is considered when the Strang type preconditioner is singular (see \cite{CN} and references therein). In order to visualize and verify our findings, we perform the following experiments.
	
	\begin{enumerate}
		\item \textbf{Zero clustering analysis}: we plot the singular values (or eigenvalues) of the difference between the matrix $A_n$ and $S_n$, and estimate the frequency of outliers. This experiment aims to show the clustering result found in Theorem \ref{cluster_difference}. Since we employ mostly continuous generating functions, we expect our results to show a strong clustering at $0$.
		
		\item \textbf{Clustering at $1$ of the preconditioned matrix-sequence}: we perform a similar analysis on the preconditioned matrix to assess the clustering at $1$.
		
		\item \textbf{Computational efficiency of preconditioned Krylov methods}: we evaluate the computational efficiency gains by comparing the number of iterations required for convergence in non-preconditioned versus preconditioned selected systems. In the square case, we solve linear systems of the form $A_n x = b$, and in the rectangular case, we tackle least squares problems $\min_x \|A_n x - b\|$, by using the preconditioned conjugate gradient method for the normal equations. We mostly use the GMRES method, which works in general when the matrix is not symmetric and the preconditioner is not positive definite, and we use the conjugate gradient method when the problem is symmetric and positive definite. In particular, the least square problems are solved using the normal equation method implemented with the (P)CGNE.
	\end{enumerate}

\subsubsection{Zero clustering analysis}
	
The following experiments show the zero clustering of the sequence $\{A_n - S_n\}_n $ by examining the singular values (or eigenvalues) of $A_n - S_n$. By the theory on strong clustering, we expect a converging phenomenon of most singular values (or eigenvalues) to $0$ and observe a small number of \lq\lq proper outliers'', i.e. outliers that never get close to the cluster.
	
	\paragraph{Methodology:} for each $n$, we compute the singular values of $A_n - S_n$. The singular value plot is effective for seeing the proper outliers outside the clustering region.
	
	\paragraph{Results:} as depicted in Figures from \ref{fig:zero_clustering_group1} to \ref{fig:zero_clustering_group3}, most singular values of $A_n - S_n$ are close to zero, with a negligible number of true outliers, i.e. outliers that do not seem to converge to the cluster and stay away from $0$. By inspecting the plot, these proper outliers seem to stay bounded in number and asymptotically constant. This is what we should expect from Theorem \ref{proper_cluster}.
	
	\begin{figure}
		\centering
		\includegraphics[width=.45\textwidth]{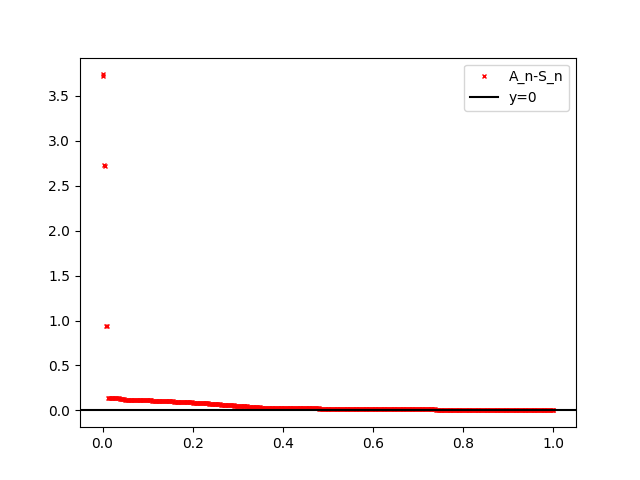}
		\includegraphics[width=.45\textwidth]{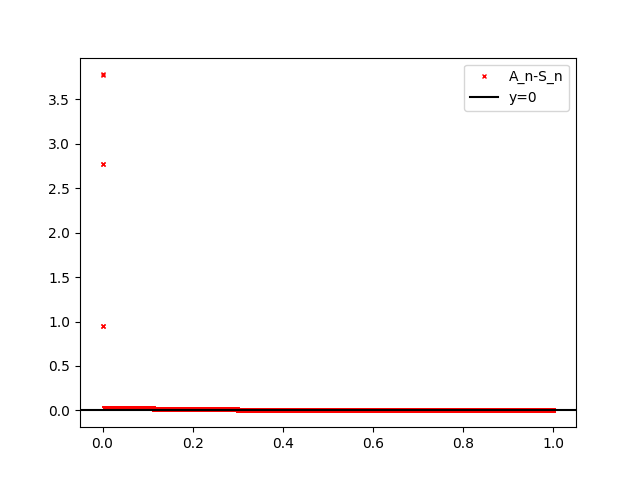} \\
             \includegraphics[width=.45\textwidth]{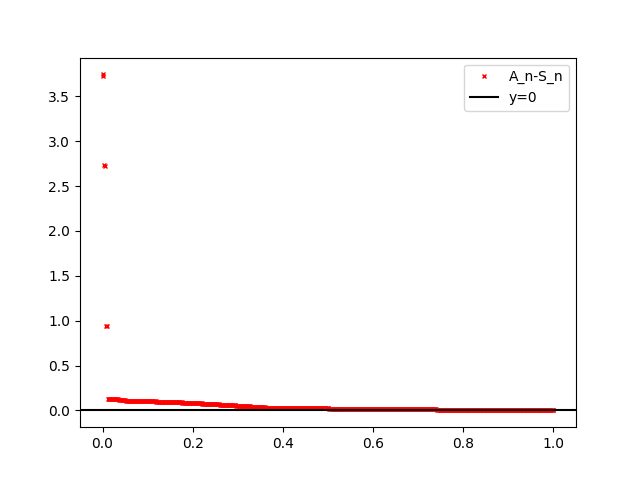}
        \includegraphics[width=.45\textwidth]{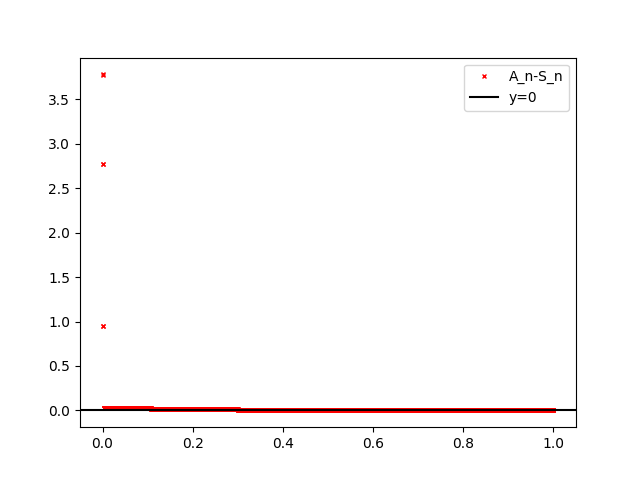} \\
       \includegraphics[width=.45\textwidth]{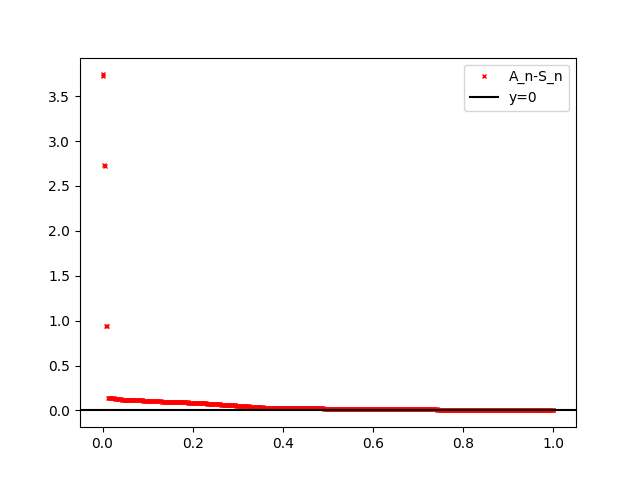}\includegraphics[width=.45\textwidth]{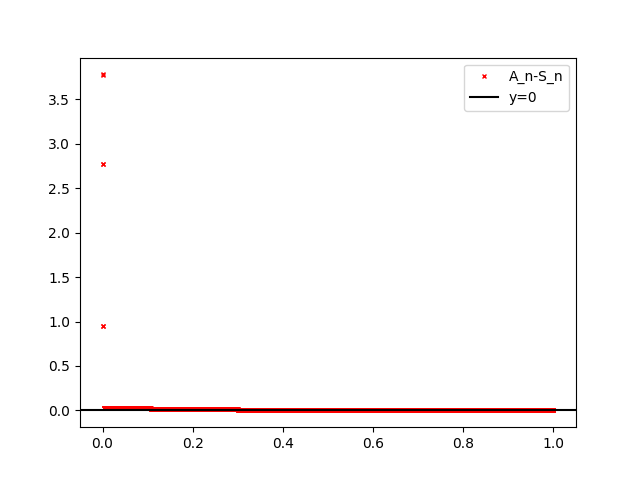}
		\caption{The cluster at $0$ of the singular values of $\{A_n-S_n\}_n$ for Group 1.
        The first row is case (a), the second row is case (b), and the third row is case (c). The first column is
        for $\eta=100$ while the second column is for $\eta=500$.}\label{fig:zero_clustering_group1}
	\end{figure}
		
	\begin{figure}
		\centering
        \includegraphics[width=.45\textwidth]{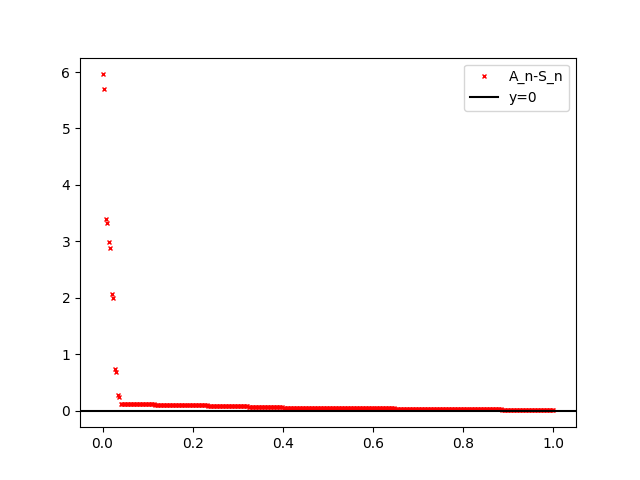}
        \includegraphics[width=.45\textwidth]{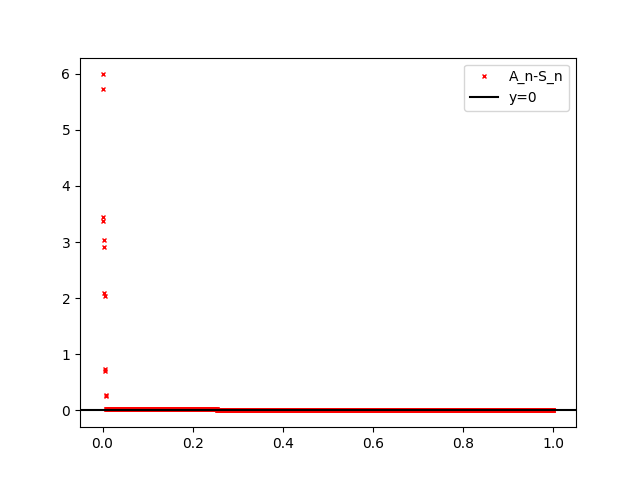} \\
		\includegraphics[width=.45\textwidth]{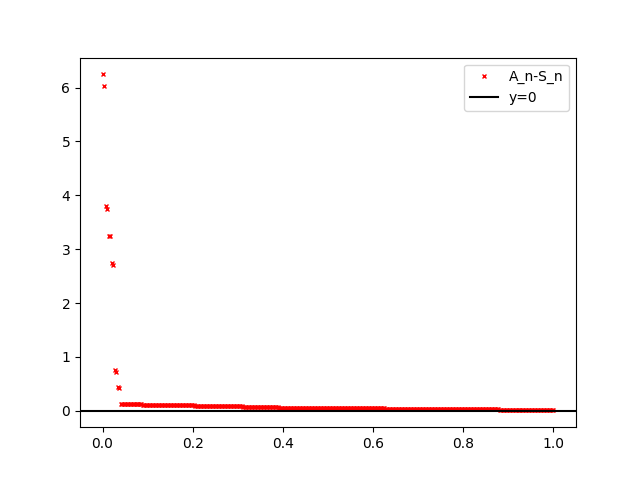}
		\includegraphics[width=.45\textwidth]{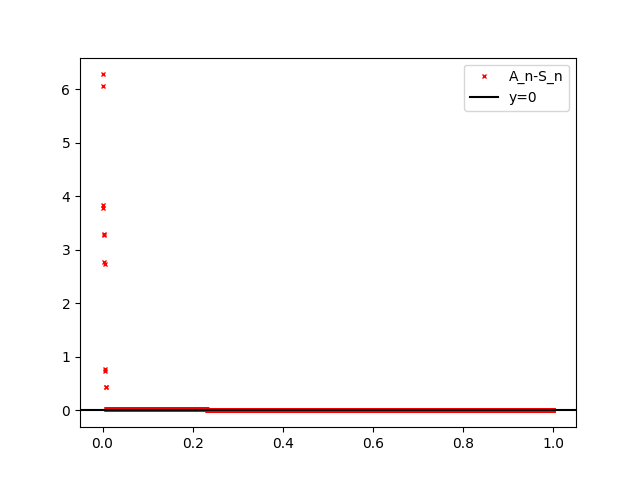} \\
        \includegraphics[width=.45\textwidth]{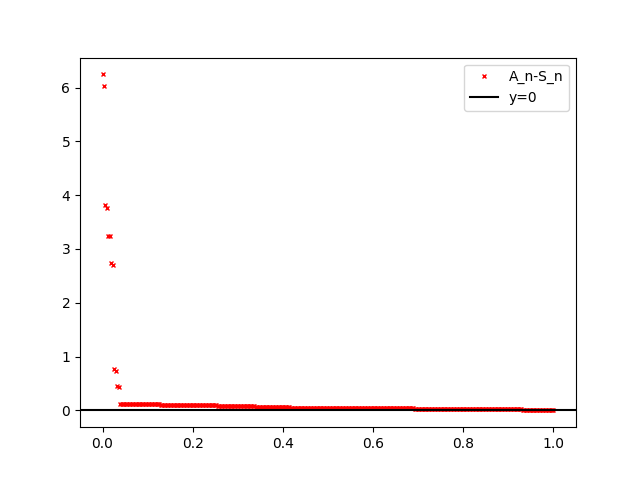}
		\includegraphics[width=.45\textwidth]{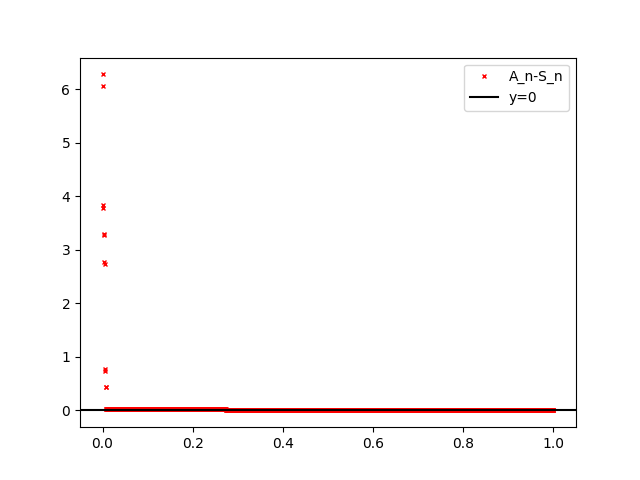}
		\caption{The cluster at $0$ of the singular values of $\{A_n-S_n\}_n$ for Group 2.
        The first row is case (a), the second row is case (b), and the third row is case (c). The first column is
        for $\eta=100$ while the second column is for $\eta=500$.}\label{fig:zero_clustering_group2}
	\end{figure}
	
	\begin{figure}
		\centering
		\includegraphics[width=.45\textwidth]{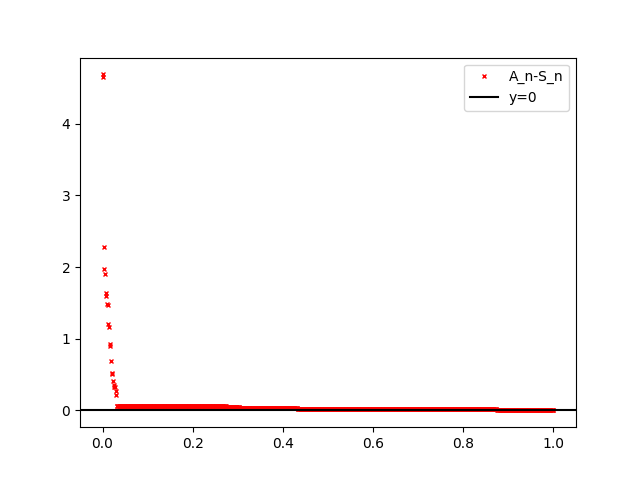}
		\includegraphics[width=.45\textwidth]{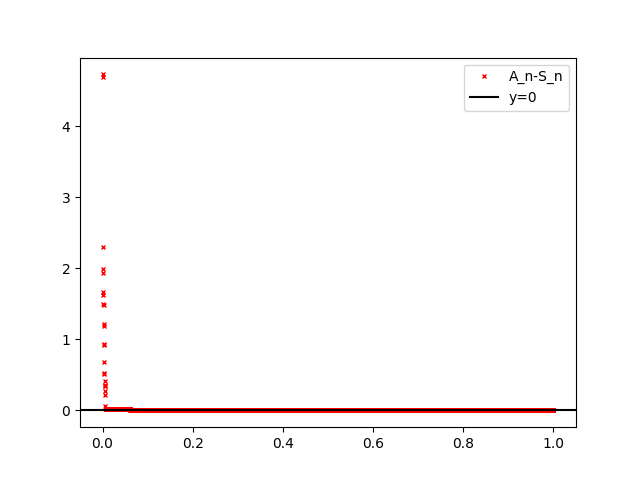}
		\caption{The cluster at $0$ of the singular values of $\{A_n-S_n\}_n$ for Group 3. The plot on the left is for $\eta=100$ while the one on the right is for $\eta=500$.}\label{fig:zero_clustering_group3}
	\end{figure}

	\subsubsection{Clustering at 1 of the preconditioned matrix-sequence}
	The second experiment
    assesses the clustering behavior at 1 of the preconditioned matrix-sequence $\{ S_n^{-1}A_n\}_n$ or
    $\{A_nS_n^{\dagger}\}_n$, by analyzing the singular values (or eigenvalues) of $S_n^{-1}A_n$ in the square case or of $A_nS_n^{\dagger}$ in the rectangular case. We compare them with {those} of the original matrix $A_n$.
	
 \paragraph{Methodology:} for each $n$, we compute singular values of the preconditioned matrix $S_n^{-1}A_n$ or $A_nS_n^{\dag}$. Similar to the previous experiment, we identify and quantify the frequency of outliers in the singular value distribution.
	
	\paragraph{Results:} Figures from \ref{fig:1_clustering_group1} to \ref{fig:1_clustering_group3} show that the singular values of the preconditioned matrix are tightly clustered around one, with very few outliers. The clustering becomes more pronounced as $n$ increases, consistent with the theoretical prediction of the clustering at $1$. This is confirmed by the results in Table \ref{tab:outliers} where the ratio between the eigenvalues (singular values) and the outliers are reported for all the considered cases. We observe that the outliers decaying to $0$ do not exceed double the number of outliers above $1$. This is reasonable by \cite{AxL,Krylov-book} since the problems we aim to solve are not exponentially ill-conditioned, and this clustering behavior allows us to solve the related linear system {quickly} by using Krylov solvers.
	
	\begin{figure}
		\centering
    \includegraphics[width=.45\textwidth]{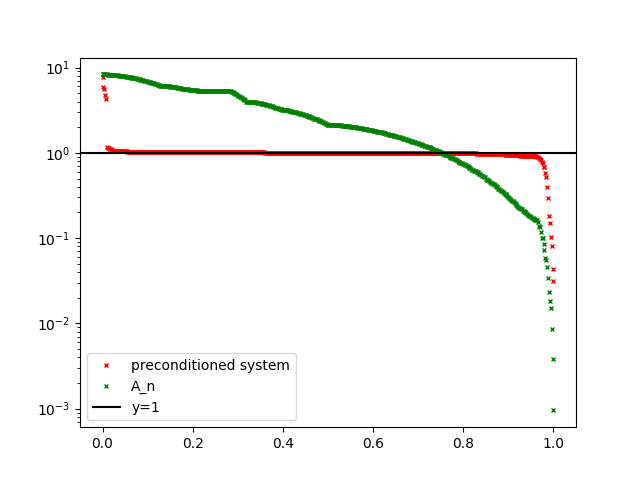}\includegraphics[width=.45\textwidth]{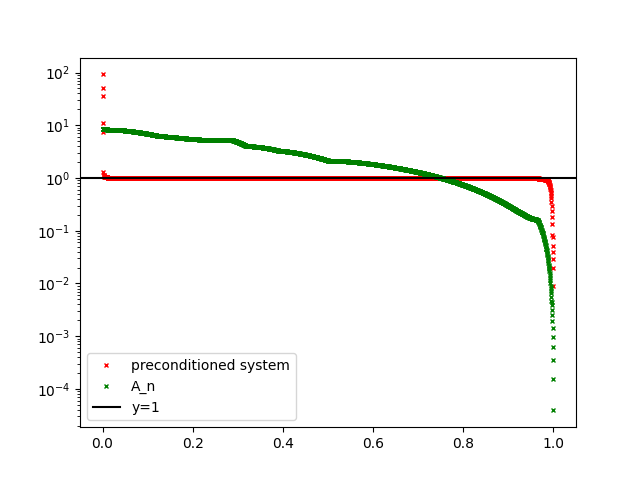} \\
    \includegraphics[width=.45\textwidth]{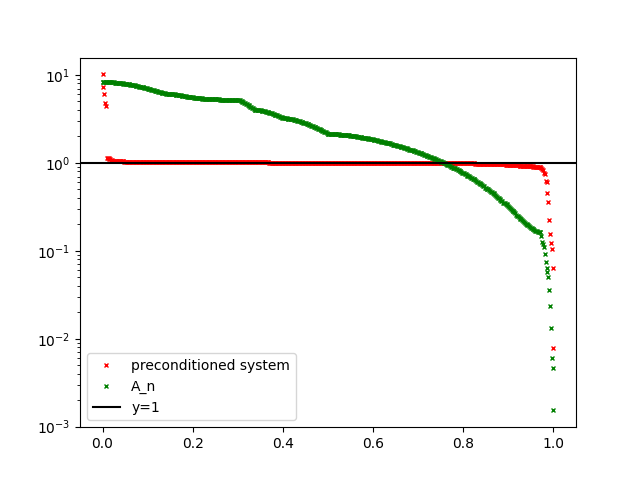}
\includegraphics[width=.45\textwidth]{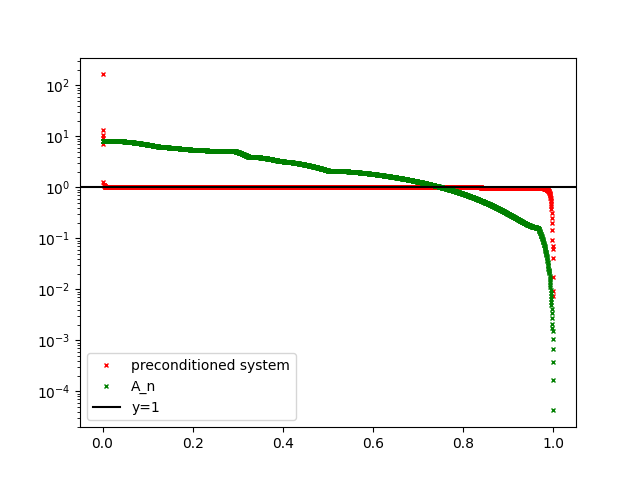} \\
\includegraphics[width=.45\textwidth]{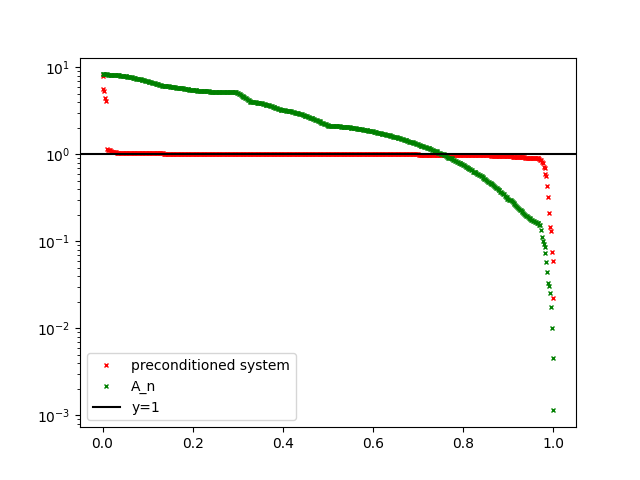}	\includegraphics[width=.45\textwidth]{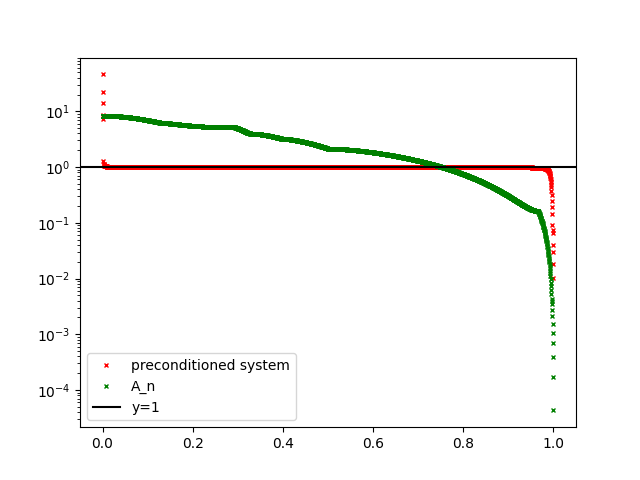}
		\caption{The cluster at $1$ of the singular values of $\{S_n^{-1}A_n\}_n$ for Group 1.
        The first row is case (a), the second row is case (b), and the third row is case (c). The first column is
        for $\eta=100$ while the second column is for $\eta=500$.}\label{fig:1_clustering_group1}
	\end{figure}
	
	\begin{figure}
		\centering		\includegraphics[width=.45\textwidth]{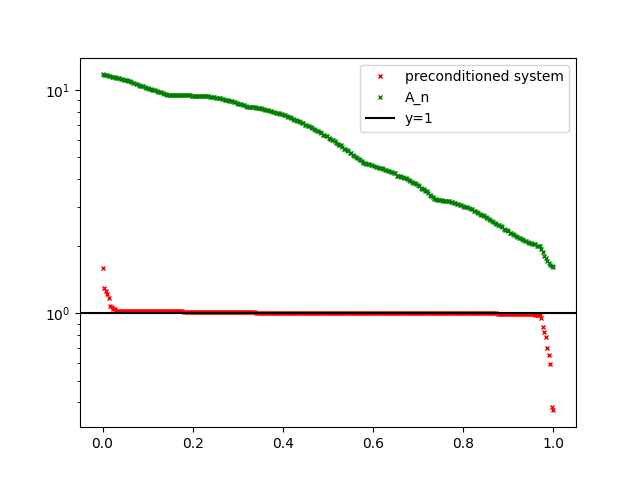}
	\includegraphics[width=.4\textwidth]{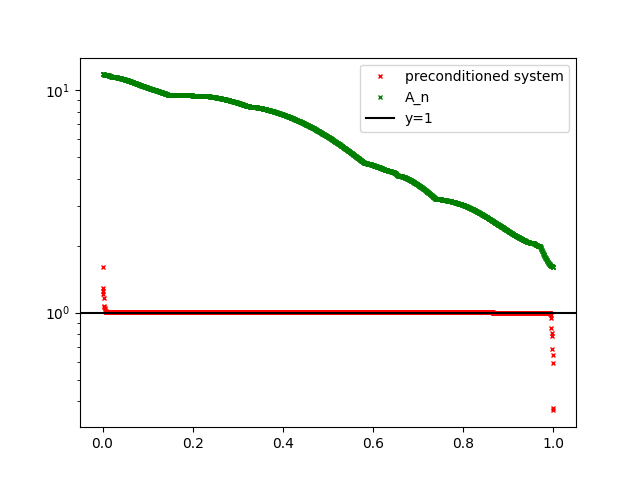}\\
    \includegraphics[width=.45\textwidth]{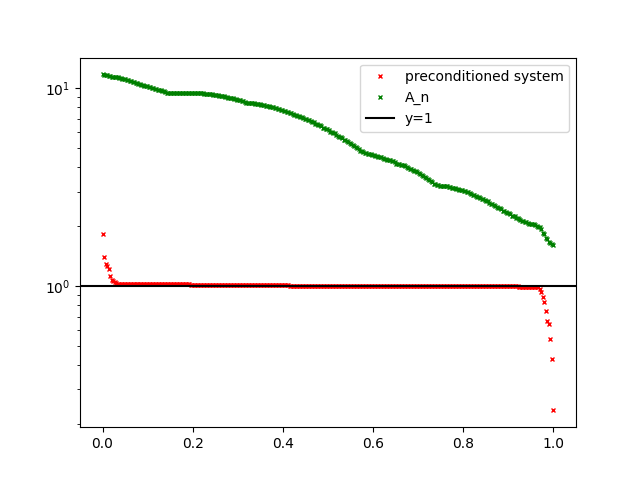}
    \includegraphics[width=.45\textwidth]{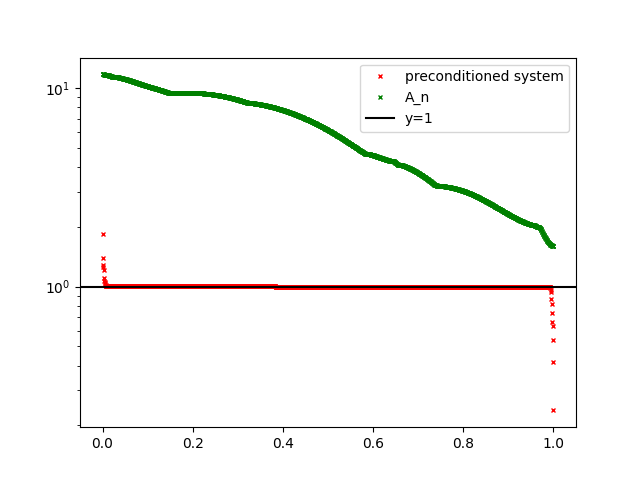}\\
    \includegraphics[width=.45\textwidth]{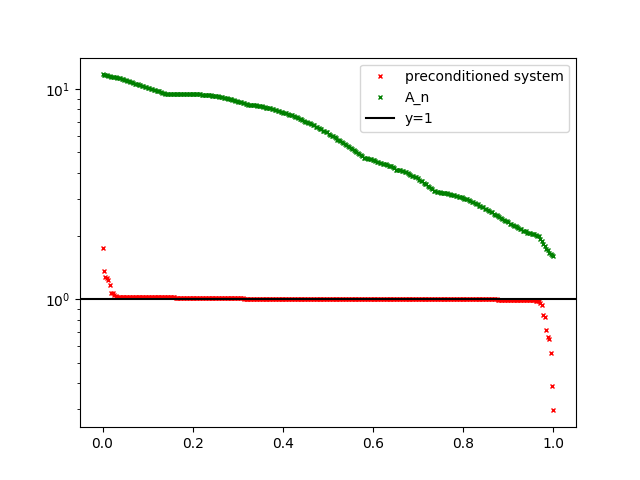}
    \includegraphics[width=.45\textwidth]{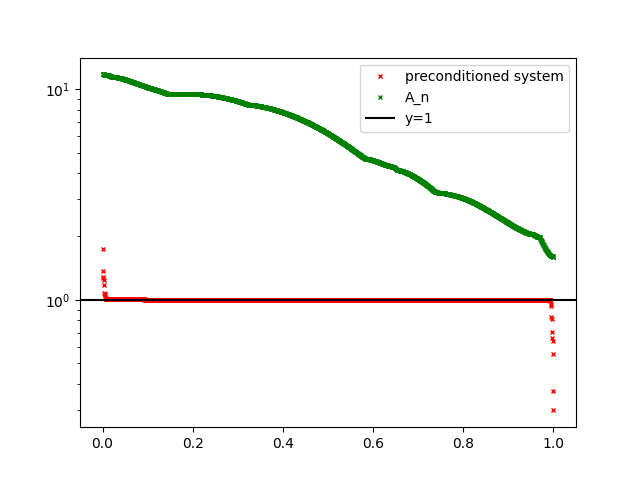}
		\caption{The cluster at $1$ of the singular values of $\{A_nS_n^{\dag}\}_n$ for Group 2.
        The first row is case (a), the second row is case (b), and the third row is case (c). The first column is
        for $\eta=100$ while the second column is for $\eta=500$.}\label{fig:1_clustering_group2}
	\end{figure}

        \begin{figure}
		\centering
        \includegraphics[width=.45\textwidth]{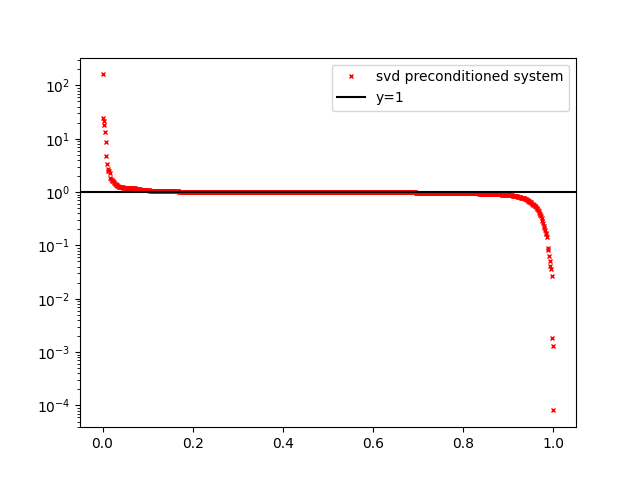}
        \includegraphics[width=.45\textwidth]{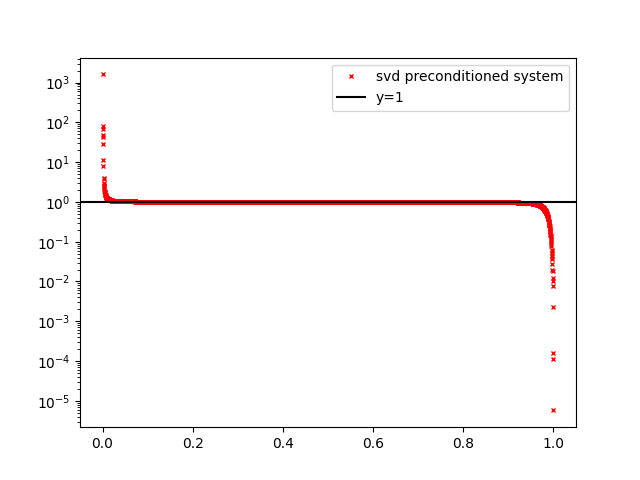}
		\caption{The cluster at $1$ of the singular values of $\{S_n^{-1}A_n\}_n$ for Group 3.
        The plot on the left is
        for $\eta=100$ while the one on the right is for $\eta=500$.}\label{fig:1_clustering_group3}
	\end{figure}
	
	\begin{table}[!h]
		\centering
		\begin{tabular}{|l|c|c|c|c|c|}
			\toprule
			Case & $\eta$ & Outliers Below & Ratio Below & Outliers Above & Ratio Above \\
			\midrule
            Group 1 &
			100 & 8 & 0.0160 & 5 & 0.010 \\
            case (a) &
			200 & 10 & 0.0100 & 5 & 0.005 \\
             &
			500 & 14 & 0.0056 & 5 & 0.002 \\
            \midrule
		 Group 1 &	100 & 8 & 0.0150 & 5 & 0.009 \\
		case (b) &	200 & 10 & 0.0090 & 5 & 0.005 \\
		&	500 & 14 & 0.0055 & 5 & 0.002 \\
        \midrule
	Group 1 &	100 & 8 & 0.0150 & 5 & 0.009 \\
        case (c)&	200 & 10 & 0.0090 & 5 & 0.005 \\
	&		500 & 14 & 0.0055 & 5 & 0.002 \\
    \bottomrule
    Group 2 & 100 & 9 & 0.030 & 8 & 0.027 \\
    case (a) & 200 & 9 & 0.015 & 8 & 0.013 \\
		&	500 & 9 & 0.006 & 7 & 0.005 \\
    \midrule
    Group 2 & 100 & 9 & 0.029 & 8 & 0.026 \\
    case (b) & 200 & 9 & 0.015 & 8 & 0.013 \\
		&	500 & 9 & 0.006 & 7 & 0.005 \\
    \midrule
    Group 2 &	100 & 9 & 0.029 & 8 & 0.026 \\
    case (c) &	200 & 9 & 0.015 & 8 & 0.013 \\
		&	500 & 9 & 0.006 & 7 & 0.005 \\
			\bottomrule
    	& 100 & 27 & 0.038 & 18 & 0.026 \\
    Group 3 & 200 & 35 & 0.025 & 17 & 0.012 \\
		& 500 & 53 & 0.015 & 23 & 0.006 \\
			\bottomrule
		\end{tabular}
		\caption{Outliers of the preconditioned matrix with respect to the cluster at $1$}
		\label{tab:outliers}
	\end{table}

\subsubsection{Computational efficiency of preconditioned Krylov methods}
The third experiment assesses the computational efficiency of preconditioned versus non-preconditioned {Krylov methods}. In particular, we test the preconditioned GMRES method and the PCG method. For rectangular matrices, we apply the (P)CGNE method for the normal equations to the underdetermined system. We compare the efficiency of the solution {to the} unpreconditioned system.
	
	\paragraph{Methodology:} We compare the behavior of the non-preconditioned system with that of the preconditioned one. The initial guess is always the zero vector. For (P)GMRES, the restarting is always set to 100 iterations.
    	In detail, the experiments we perform are as follows:
	\begin{enumerate}
		\item For group 1, we test (P)GMRES for all configurations of $\eta$.
		\item For group 2, we test (P)CGNE applied to the relative least-squares problem for case~(c).
		\item For group 3, we modify the original matrix. The matrix is symmetrized, then the diagonal block $2 \times 2$ is scaled by a factor of $1.78$ and the off-diagonal blocks by a factor of $0.7$. This allows us to test CG for a Hermitian positive definite system.
	\end{enumerate}
	In all experiments, we conventionally say that the method converges when the norm of the residual at the end of the step is less than $10^{-8}$ and the number of steps is less than $1000$. We choose the number of steps conventionally for all experiments: since we always choose $\eta=100,200,500$, a convergence of more than 1000 iterations is not in our interest because it means that the convergence remains linear and we do not have any improvement.
	
	We report the number of iterations of each experiment and the norm of the residual before the convergence goal.
	
	\paragraph{Results:} The tables \ref{tab:iteration_counts}--\ref{tab:iteration_counts_group3} show, in general, an improvement of the Krylov method when we apply the minimal {Frobenius} circulant preconditioner.
	
	In group 1 we have a symmetric indefinite matrix. The unpreconditioned method fails to converge within the desired number of iterations. Differently, the preconditioned GMRES using the preconditioner from the minimal {Frobenius} circulant cut the iteration number showing a fast convergence compared to the conditioning of the problem (see Table \ref{tab:iteration_counts}).
	
	The group 2 is well conditioned; the CGNE converges within the maximum number of iterations, but in Table \ref{tab:iteration_counts_group2_case_c} we note a drastic improvement by preconditioning it. This is reasonable since we observe that the singular values of both problems are uniformly bounded away from zero. This gives the minimal Frobenius preconditioner ideal properties similar to the one described in \cite{compression1,compression2}.
		
	The modified group 3 is positive definite and symmetric, so the CG behaves well and is predictable. Empirically, we see that the singular values of the preconditioned matrix do not decay significantly, and the larger singular values do not grow. The latter explains the fact that the number of iterations seems to remain constant, and we expect a superlinear convergence of the method, see Table \ref{tab:iteration_counts_group3}.

	In all experiments, the CG we applied has good and reasonable behavior. The {convergence} behavior of GMRES is theoretically more complex, but we see {again} good properties of the numerical results.
	
	\begin{table}[!h]
		\centering
		\begin{tabular}{|c|c|c|c|c|c|c|c|c|c|}
			\toprule
		Case	& $\eta$ & $d_n$ & iter. & Norm res. & iter. & Norm res. & $\lambda$ \\
			& & & GMRES & GMRES &  PGMRES & PGMRES &  \\
			\midrule
        & 100 & 500  & 1000 & 2.72e-02 & 23 & 1.47e-07 & 0.046 \\
        (a) & 200 & 1000 & 1000 & 3.43e-02 & 27 & 1.68e-07 & 0.027 \\
        & 500 & 2500 & 1000 & 3.14e-02 & 33 & 3.44e-07 & 0.013 \\
        \midrule
       & 100 & 520  & 1000 & 2.78e-02 & 21 & 1.82e-07 & 0.042 \\
       (b) & 200 & 1020 & 1000 & 3.43e-02 & 25 & 3.67e-07 & 0.025 \\
       & 500 & 2520 & 1000 & 3.21e-02 & 33 & 8.22e-07 & 0.013 \\
        \midrule
        & 100 & 510  & 1000 & 2.95e-02 & 22 & 1.38e-07 & 0.044 \\
        (c) & 200 & 1015 & 1000 & 2.43e-02 & 28 & 4.73e-08 & 0.028 \\
        & 500 & 2523 & 1000 & 3.21e-02 & 32 & 1.64e-07 & 0.013 \\
			\bottomrule
		\end{tabular}
		\caption{Group 1: number of iterations and norm of the residuals of (P)GMRES across various values of $\eta$, where $d_n$ is the size of the problem and $\lambda= \text{iter. PGMRES}/N$.}
		\label{tab:iteration_counts}
	\end{table}

	\begin{table}[h!]
		\centering
		\begin{tabular}{|c|c|c|c|c|c|c|}
			\toprule
			$\eta$ & $d_n$ & iter. CGNE & Res. CGNE & iter. PCGNE & Res. PCGNE & $\lambda$ \\
			\midrule
     100 & 310  & 57 & 1.59e-07 & 18 & 6.51e-08 & 0.058 \\
        200 & 615  & 57 & 2.09e-07 & 18 & 8.47e-08 & 0.029 \\
        500 & 1522 & 56 & 3.00e-07 & 18 & 8.35e-08 & 0.012 \\
			\bottomrule
		\end{tabular}
		\caption{Group 2, case (c): number of iterations and norm of the residuals of (P)CGNE across various values of $\eta$, where $d_n$ is the number of rows of $A_n$ and $\lambda= \text{iter. PCGNE}/N$.}
\label{tab:iteration_counts_group2_case_c}
	\end{table}
	
	\begin{table}[!h]
		\centering
		\begin{tabular}{|c|c|c|c|c|c|c|}
			\toprule
			$\eta$ & $d_n$ & iter. CG & Res. CG & iter. PCG & Res. PCG & $\lambda$ \\
			\midrule
			100 & 696 & 226 & 6.07e-07 &   23 & 6.51e-07 &0.15 \\
			200 & 1396 & 382 & 9.06e-07 & 27 & 3.29e-07 & 0.11 \\
			500 & 3496 & 496 & 1.44e-06 & 28 & 4.95e-07 & 0.07 \\
			\bottomrule
		\end{tabular}
		\caption{Group 3: number of iterations and norm of the residuals of (P)CG across various values of~$\eta$, where $d_n$ is the size of the problem and $\lambda= \text{iter. PCG}/N$.}
	\label{tab:iteration_counts_group3}
	\end{table}
		
	\begin{table}[!h]
		\centering
		\resizebox{\textwidth}{!}{
		\begin{tabular}{|c|c|c|c|c|c|c|c|c|}
			\toprule
			Case & $\eta$ & $\min \sigma(A_n)$ & $\min \sigma(M_n)$ &  $\max \sigma(A_n)$ &  $\max \sigma(M_n)$ & $\mu(A_n)$ & $\mu(M_n)$ \\
			\midrule
			& 100 & 9.53e-04 & 3.13e-02 & 8.31 & 7.69 & 8.72e+03 & 2.46e+02 \\
		(a) & 200 & 2.44e-04 & 1.67e-02 & 8.32 & 26.43 & 3.41e+04 &  1.59e+03 \\
		&	500 & 3.91e-05 & 9.10e-03 & 8.32 & 92.04  & 2.13e+05 & 1.01e+04 \\
        \midrule
        & 100 & 2.99e-04 & 2.30e-04 &  8.32 & 23.56 & 2.78e+04 & 1.03e+03 \\
	(b)	& 200 & 1.53e-03  & 7.94e-03 & 8.31 & 10.14 & 5.43e+03 &  1.28e+03 \\
	& 500 & 4.23e-05 & 7.30e-03 & 8.32 & 164.76 & 1.97e+05 & 2.26e+04 \\
    \midrule
     & 100 & 1.14e-03 & 2.23e-02 & 8.32 & 8.04 & 7.29e+03 & 3.60e+02  \\
    (c) & 200 & 2.46e-04 & 2.59e-03 & 8.32 & 26.49 & 3.38e+04 & 1.02e+04 \\
    & 500 & 4.30e-05 & 1.01e-02 & 8.32 & 45.94 & 1.93e+05 & 1.15e+04 \\
			\bottomrule
			\end{tabular}
		}
		\caption{Group 1: extreme singular values and conditioning. Here $M_n=S_n^{-1}A_n$.}
		\label{tab:singular_values_1a}
	\end{table}
	
	\begin{table}[!h]
		\centering
		\resizebox{\textwidth}{!}{
		\begin{tabular}{|c|c|c|c|c|c|c|c|c|}
			\toprule
			Case & $\eta$ & $\min \sigma(A_n)$ & $\min \sigma(M_n)$ &  $\max \sigma(A_n)$ &  $\max \sigma(M_n)$ & $\mu(A_n)$ & $\mu(M_n)$ \\
			\midrule
			& 100 & 1.610 & 0.36 &  11.74 & 1.62 & 7.28e+00 & 4.50e+00 \\
		(a) &	200 & 1.607 & 0.36   & 11.74 & 1.62 &  7.31e+00 & 4.51e+00 \\
		&	500 & 1.605 & 0.36  & 11.75 & 1.62 & 7.32e+00 & 4.53e+00 \\
        \midrule
    & 100 & 1.610 & 0.36 & 11.74 & 1.62   & 7.28e+00 & 4.50e+00 \\
	(b) &	200 & 1.607 & 0.36   & 11.74 & 1.62 & 7.31e+00 & 4.51e+00 \\
	& 500 & 1.605 & 0.36 & 11.75 & 1.62 & 7.32e+00 & 4.53e+00 \\
    \midrule
    & 100 & 1.610 & 0.34 & 11.74 & 1.62  & 7.28e+00 & 4.51e+00 \\
    (c) &	200 & 1.606 & 0.36  & 11.74 & 1.62 & 7.31e+00 & 4.51e+00 \\
    & 500 & 1.604 & 0.36 & 11.75 & 1.62 & 7.32e+00 & 4.53e+00 \\
			\bottomrule
			\end{tabular}}
		\caption{Group 2: extreme singular values and conditioning. Here $M_n=A_nS_n^{\dag}$.}
		\label{tab:singular_values_2a}
	\end{table}

	\begin{table}[!h]
		\centering
		\resizebox{\textwidth}{!}{
		\begin{tabular}{|c|c|c|c|c|c|c|c|}
			\toprule
			$\eta$ & $\min \sigma(A_n)$ & $\min \sigma(M_n)$ &  $\max \sigma(A_n)$ &  $\max \sigma(M_n)$ & $\mu(A_n)$ & $\mu(M_n)$ \\
			\midrule
			100& 1.47e-3 & 3.65e-2 & 11.62 & 17.22 & 7.90e+03 & 4.72e+02 \\
			200& 7.67e-4 & 3.65e-2 & 11.62 & 26.78 & 1.52e+04 & 7.33e+02 \\
			500& 5.62e-4 & 3.81e-2 & 11.61& 45.90 & 2.07e+04 & 1.21e+03 \\
			\bottomrule
			\end{tabular}}
		\caption{Group 3: extreme singular values and conditioning. Here $M_n=S_n^{-1}A_n$.}
		\label{tab:singular_values_3mod}
	\end{table}
	
For the (P)CG method we have a theoretical prediction result of the speed of the algorithm when we apply the preconditioner \cite{AxL} (see also \cite{Krylov-book} for similar results for other (preconditioned) Krylov methods).
\newpage
	\begin{theorem}\label{serra_outliers}
		Consider the function
		\begin{displaymath}
			k^*(a, b, \varepsilon) = \left\lceil \frac{\log 2 \varepsilon^{-1}}{\log \sigma} \right\rceil
		\end{displaymath}
		with $\sigma = \frac{\sqrt{b} - \sqrt{a}}{\sqrt{b} + \sqrt{a}}$.
		Suppose the spectrum $\Sigma_n$ of $P_n^{-1} A_n$ follows one of the three cases:
		\begin{enumerate}
			\item $\Sigma_n \subset [a, b]$ with at most $q$ outliers bigger than $b$;
			\item $\Sigma_n \subset [a, b]$ with at most $q$ outliers lower than $a$;
			\item $\Sigma_n \subset [a, b]$ with at most $2q$ outliers, $q$ bigger than $b$ and $q$ outliers lower than $a$.
		\end{enumerate}
 		{Then, to reach} a fixed accuracy $\varepsilon$ with the PCG method with preconditioner $P_n$, $N(\varepsilon)$ iterations are sufficient according to the cases below:
		\begin{enumerate}
			\item $N(\varepsilon) = q + k^*(a, b, \varepsilon)$;
			\item $N(\varepsilon) = q + k^*(a, b, \varepsilon^*)$, with $\varepsilon^* = \varepsilon \cdot O(\mu(n))$;
			\item $N(\varepsilon) = q + k^*(a, b, \varepsilon^*)$, with $\varepsilon^* = \varepsilon \cdot O(\mu(n))$.
		\end{enumerate}
		Here $\mu(n)$ denotes the spectral condition number of $P_n^{-1} A_n$ that is the standard Euclidean condition number of its symmetrized version
		$P_n^{-1/2} A_nP_n^{-1/2}$.
	\end{theorem}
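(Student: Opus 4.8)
The plan is to recognise Theorem~\ref{serra_outliers} as the Axelsson--Lindskog outlier estimate of \cite{AxL} and to reproduce its proof through the polynomial optimality of conjugate gradients. The first step is the reduction to a Hermitian positive definite problem: PCG applied to $A_nx=b$ with preconditioner $P_n$ generates, after the change of variable $\widehat x=P_n^{1/2}x$, exactly the iterates of plain CG applied to $\widehat A_n\widehat x=\widehat b$ with $\widehat A_n=P_n^{-1/2}A_nP_n^{-1/2}$, which is Hermitian positive definite and, being similar to $P_n^{-1}A_n$, has spectrum precisely $\Sigma_n$; moreover $\|\widehat x_*-\widehat x_k\|_{\widehat A_n}=\|x_*-x_k\|_{A_n}$. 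Hence the classical minimax bound applies: after $k$ steps the relative error, measured in the $\widehat A_n$-norm (equivalently in the $A_n$-energy norm), is at most $\min\{\max_{\lambda\in\Sigma_n}|p(\lambda)|:\deg p\le k,\ p(0)=1\}$. It then suffices, in each of the three cases, to exhibit one admissible polynomial $p$ of degree equal to the claimed iteration count with $\max_{\lambda\in\Sigma_n}|p(\lambda)|\le\varepsilon$.

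For case~1, with outliers $\mu_1,\dots,\mu_q>b$, I would take $p=p_{\mathrm{out}}\cdot\widehat{\mathcal C}_{k^*}$, where $p_{\mathrm{out}}(\lambda)=\prod_{i=1}^q(1-\lambda/\mu_i)$ and $\widehat{\mathcal C}_{k^*}$ is the Chebyshev polynomial of degree $k^*=k^*(a,b,\varepsilon)$ affinely mapped to $[a,b]$ and normalised by $\widehat{\mathcal C}_{k^*}(0)=1$. By construction $p(0)=1$ and $p$ vanishes at every $\mu_i$; on $[a,b]$ each factor $1-\lambda/\mu_i$ lies in $(0,1)$, so $|p_{\mathrm{out}}|\le1$ there, whence $\max_{\lambda\in\Sigma_n}|p(\lambda)|\le\max_{\lambda\in[a,b]}|\widehat{\mathcal C}_{k^*}(\lambda)|\le 2\sigma^{k^*}$, and the right-hand side is $\le\varepsilon$ precisely by the definition of $k^*$ (the standard shifted-Chebyshev estimate, with $\sigma=\tfrac{\sqrt b-\sqrt a}{\sqrt b+\sqrt a}\in(0,1)$). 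Since $\deg p=q+k^*$, the minimax bound gives $N(\varepsilon)=q+k^*(a,b,\varepsilon)$.

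Cases~2 and 3 follow the same template, the annihilating factor now carrying the roots $\nu_j<a$ (all of them in case~2, alongside the $q$ roots larger than $b$ in case~3). The main---and essentially only---delicate point is that for $\lambda\in[a,b]$ the factor $1-\lambda/\nu_j$ is negative with modulus $\lambda/\nu_j-1\le b/\nu_j$, so annihilating a small outlier \emph{amplifies} the polynomial on the cluster rather than damping it. Using $\nu_j\ge\min\Sigma_n$ and the fact that $q$ is bounded, I would bound this amplification by a factor $\Gamma$ of the order of a fixed power of $\mu(n)$, where $\mu(n)$ is the spectral condition number of $\widehat A_n$; to absorb it one requires the Chebyshev factor to reach the sharpened tolerance $\varepsilon^*$ with $\varepsilon/\varepsilon^*=\Gamma$, which---since $\log\Gamma=O(\log\mu(n))$---adds only the term $O\!\big(\log\mu(n)/\log\sigma^{-1}\big)$ to $k^*$. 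Collecting degrees---the number of annihilated outliers plus $k^*(a,b,\varepsilon^*)$---then yields the bounds stated in items~2 and 3. The remaining bookkeeping is routine: one checks $p(0)=1$ throughout and notes that the passage from the energy-norm estimate to the declared accuracy $\varepsilon$ is consistent, up to a condition-number factor absorbed in the $O(\cdot)$, with the residual-based stopping test adopted in the experiments.
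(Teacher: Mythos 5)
The paper does not supply a proof of this theorem: it is reproduced as a known convergence estimate attributed to Axelsson and Lindskog \cite{AxL} (see also \cite{Krylov-book}), and is invoked purely as a black box to interpret the observed iteration counts via the empirical formula (\ref{estimate}). Your reconstruction through the CG minimax bound, with a trial polynomial equal to an annihilating factor at the outliers times a shifted-and-normalised Chebyshev polynomial on $[a,b]$, is precisely the mechanism behind the cited estimate, so the approach is appropriate and the case-1 argument is essentially complete. In cases 2 and 3 you correctly isolate the crux: an annihilating factor $(1-\lambda/\nu_j)$ with $\nu_j<a$ has modulus up to $b/\nu_j - 1 = O(\mu(n))$ on $[a,b]$, so removing small outliers amplifies the polynomial on the bulk rather than damping it; you then tighten the Chebyshev tolerance to $\varepsilon/\Gamma$ with $\Gamma=O(\mu(n)^{q})$. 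Two remarks. First, the passage from $\Gamma=O(\mu(n)^{q})$ to an additive cost $O(\log\mu(n)/\log\sigma^{-1})$ tacitly uses that $q$ is a fixed constant independent of $n$; this should be stated, since it is exactly the assumption under which the theorem is useful. Second, your derivation naturally produces $\varepsilon^{*}=\varepsilon/\Gamma$, i.e.\ a \emph{smaller} effective tolerance and hence a \emph{larger} $k^{*}$, whereas the theorem as printed writes $\varepsilon^{*}=\varepsilon\cdot O(\mu(n))$; taken literally that would make $k^{*}$ shrink as the conditioning worsens, which contradicts both the physics of small outliers and the sign of the $\mu(n)$-dependence in (\ref{estimate}). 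The factor in the printed statement should therefore be read as $O(\mu(n)^{-1})$, and your version is the one consistent with the original reference.
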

Given the Theorem \ref{serra_outliers}, if we consider the empirical data of the spectrum of matrices and the numerical results of the Krylov method used, given the uncertainty of the cluster at $1$ set at $0.5$, the convergence tolerance set to $10^{-8}$ and the hyperparameter we set for the experiment, we could see {empirically} that PCG and PGMRES show a number of iterations of the form
	\begin{equation}\label{estimate}
		q-6.072\log\left(\frac{2}{\mu(n)}\right)+\hat{C}_{\eta},
	\end{equation}
	where $q$ is the number of outliers greater than $1$ and $\hat{C}_{\eta}$ is an asymptotically constant number (in our test between 30 and 40), while the condition number $\mu(n)$ is reported in tables \ref{tab:singular_values_1a}--\ref{tab:singular_values_3mod}.
    We expect this behavior to be maintained if we consider larger and larger matrices.
	

    Unfortunately, Theorem \ref{serra_outliers} does not provide an estimation of $\hat{C}_{\eta}$. Nevertheless, in our tests, PCG and PGMRES converge in a reasonable number of iterations, although Theorem \ref{serra_outliers} does not predict the related behavior.

\subsection{Numerical results from fractional diffusion equations}\label{ssec:fract}
In this section, by exploiting the findings in Section \ref{sec:Toeplitz_block} and Section \ref{sec:appr-prec}, we test further preconditioners for dense matrices of the form (\ref{eq:A_Toeplitz}), both in the unilevel and in the two-level setting.
According to (\ref{eq:A_Toeplitz}), in the unilevel setting, the matrix sizes are proportional to $n$, while in the two-level setting each block in (\ref{eq:A_Toeplitz}) is a two-level Toeplitz structure. Hence, the resulting matrix-sizes are proportional to $n^2$.
The preconditioners follow the structure described in (\ref{eq:hat A_Toeplitz}), with each nonzero block being replaced by a circulant counterpart.
Of course, the choice is computationally convenient since the overall inversion of the preconditioner can be done within $O(n^d\log n)$ arithmetic operations, where $d=1$ in the unilevel case and $d=2$ in the two-level case. For specific cases better selections can be considered, as the use of the $\tau$ algebra, when all the involved generating functions are even: in this respect, the reader is referred to \cite{linear} for a theoretical analysis motivating the superior performances of the $\tau$ algebra approximation with respect to the circulant approximation, and to \cite{fract1,fract2} for specific studies, when fractional differential equations are considered.

In Example \ref{exl:ex1_1d_positivePrec_sparseResidual} and Example \ref{exl:ex2_1d_nonPositivePrec_denseResidual} we consider the matrix-sequence $\{A_n\}_n$, where, for each $n$, $A_n$ is a matrix as in (\ref{eq:A_Toeplitz}) with $s=t=1$ and $\nu=2$. More in detail, for the unilevel examples, we choose
$n_2=2n$, $n_1=n$ so that the global matrix-size is $d_n=3n$. Furthermore, we set
\begin{displaymath}
	A_n=\begin{bmatrix}
		T_{n_1}(f_{1,1}) &  T_{n_1,n_2}(f_{1,2})\\
		T_{n_2,n_1}(f_{2,1}) &  T_{n_2}(f_{2,2})
	\end{bmatrix}
\end{displaymath}
and the preconditioner
\begin{displaymath}
	P_n=\begin{bmatrix}
		C_{n1}(T_{n_1}(f_{1,1})) &  C_{n1}(T_{n_1}(f_{1,2})) & \mathbf{0}_{n1}          \\
		C_{n1}(T_{n_1}(f_{2,1})) &  C_{n1}(T_{n_1}(f_{2,2})) & \mathbf{0}_{n1}          \\
		\mathbf{0}_{n1}          &  \mathbf{0}_{n1}          & C_{n1}(T_{n_1}(f_{2,2}))
	\end{bmatrix}
\end{displaymath}
whith $C_n(T_n(f))$ being the Frobenius optimal preconditioner for $T_n(f)$ \cite{T-Chan,compression1,compression2}. The preconditioner is block circulant so all operations can be performed within $O(n\log n)$ cost. {The zero blocks of the preconditioner form a special matrix-sequence which correspond to a \emph{zero-distributed} residual of $\{A_n\}_n$ after subtracting the part that essentially determines the spectral behavior as discussed in Theorem \ref{th:general}; see (\ref{eq:hat A_Toeplitz}).}

In the two-level setting, we choose the same proportions between $n_1$ and $n_2$. However, each block is a two-level Toeplitz structure constructed using tensor products so that the whole matrix has size $d_n=3n^2$.

A further feature that makes all the considered examples challenging is the fact that we use as diagonal blocks approximations of fractional differential operators so that the whole structures are dense with slowly decaying coefficients.

In the specific examples, we report the generating functions that we use and report the numerical tests and the visualizations, in connection with the theoretical findings.

\begin{example}\label{exl:ex1_1d_positivePrec_sparseResidual}
	Here we employ
	\begin{align*}
			f_{1,1}(\theta)&=-{\rm e}^{-i\theta}(1-{\rm e}^{i\theta})^{\alpha}-{\rm e}^{i\theta}(1-{\rm e}^{-i\theta})^{\alpha}, \qquad
			f_{1,2}(\theta)=-1+{\rm e}^{-i\theta},\\
			f_{2,1}(\theta)&=f_{1,2}(-\theta),
            \qquad
			f_{2,2}(\theta)=4-2\cos(\theta),
		\end{align*}
the function $f_{1,1}$ stemming from the discretization of a Fractional Diffusion Equation {with constant coefficients equal to $1$} using a first-order finite difference scheme, with fractional exponent $\alpha$ set to $1.5$.
To be precise, in \cite{fract1} it was shown that
\begin{align*}
f_{1,1}(\theta)= -2g_{1}^{(\alpha)}-(g_{0}^{(\alpha)}+g_{2}^{(\alpha)})({\rm e}^{-i \theta}+{\rm e}^{i \theta})-\sum_{k=2}^{\infty}g_{k+1}^{(\alpha)}
({\rm e}^{-k i \theta}+{\rm e}^{i k \theta}),
\end{align*}
 with
\begin{align}
 g_k^{(\alpha)}=(-1)^k\binom{\alpha}{k}=\frac{(-1)^k}{k!}\alpha(\alpha-1)\cdots(\alpha-k+1),\quad k= 0,1,\ldots. \label{defin_g_aplha_coeff}
\end{align}
The function is real and positive with a zero of order $\alpha$ at $0$, so the $T_{n1}(f_{1,1})$ is an ill-conditioned, dense, and positive definite real symmetric matrix with its minimum eigenvalue $\lambda_{\min}(T_n(f_{1,1}))\sim O(n^{-\alpha})$. In this example, both the main matrix $A_n$ and the preconditioner $P_n$ are real, symmetric, and positive definite. Thus the eigenvalues of $A_n$ and the eigenvalues of the preconditioned matrix $P_n^{-1}A_n$ are positive.

Figure \ref{fig:ex1_eigs_sym_a15_n100_200_400} shows that the eigenvalues of the preconditioned matrix are clustered at $1$, while the eigenvalues of the non-preconditioned matrix are scattered uniformly between the minimum of the minimal eigenvalue of the spectral symbol and the maximum of the maximal eigenvalue of the spectral symbol.
	
In Table \ref{table:ex1_eigs_sym_a15_n100_200_400} it is shown that $1$ is indeed a weak cluster for the preconditioned matrix-sequence $\{P_n^{-1}A_n\}_n$ as the percentage of eigenvalues lying outside the interval $[0.95, 1.05]$ approaches zero, as the matrix-size increases. In the same table, the iterations needed for solving a system with coefficient $A_n$ by employing (P)CG are presented. Again the results conform to the expectations, since the slight increase in the iteration count, probably of polylogarithmic type, can be explained by the moderate increase of the absolute number $N$ of the outliers.
	
	\begin{table}
			\centering
			\begin{tabular}{|c|c|c|c|c|c|}
					\toprule
					$n_1$ & $d_n$ & $N$   & $N/d_n$ & Iter. CG  & Iter. PCG \\
					\midrule
					$100$ & $300$ & $22$  & $0.073$ &   $>100$    & $15$  \\
					$200$ & $600$ & $28$  & $0.046$ &   $>100$    & $16$  \\
					$400$ & $1200$& $37$  & $0.030$ &   $>100$    & $18$  \\
					\bottomrule
				\end{tabular}
            \caption{Example \ref{exl:ex1_1d_positivePrec_sparseResidual}: For increasing matrix size $d_n\in\{300,600,1200\}$ the absolute $N$, and the relative $N/d_n$ number of the eigenvalues of $P_n^{-1}A_n$ lying outside the fixed interval $[0.95,1.05]$ are presented. The number of the iterations needed by the CG and PCG methods is also presented.}\label{table:ex1_eigs_sym_a15_n100_200_400}
	\end{table}
	
	\begin{figure}
			\centering		\includegraphics[width=.48\textwidth]{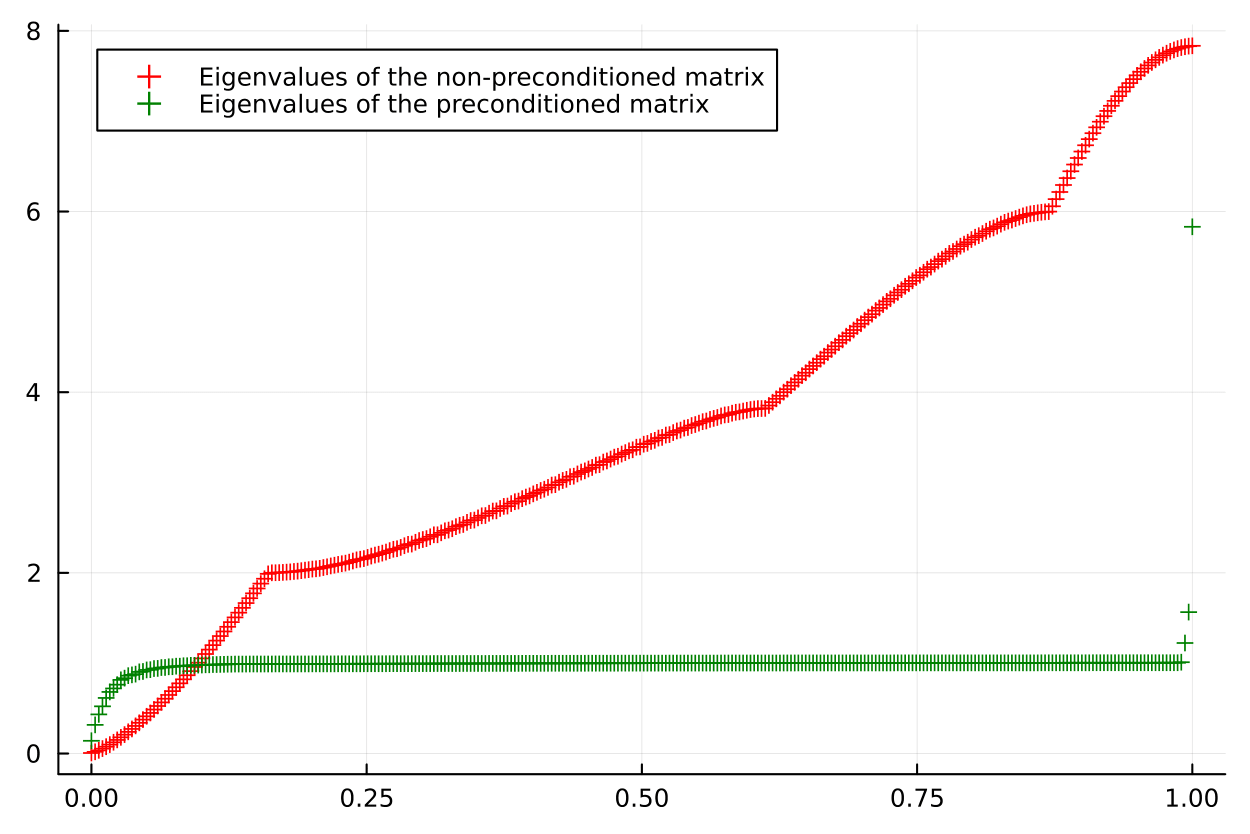}	
            \hspace{0.2cm}
            \includegraphics[width=.48\textwidth]{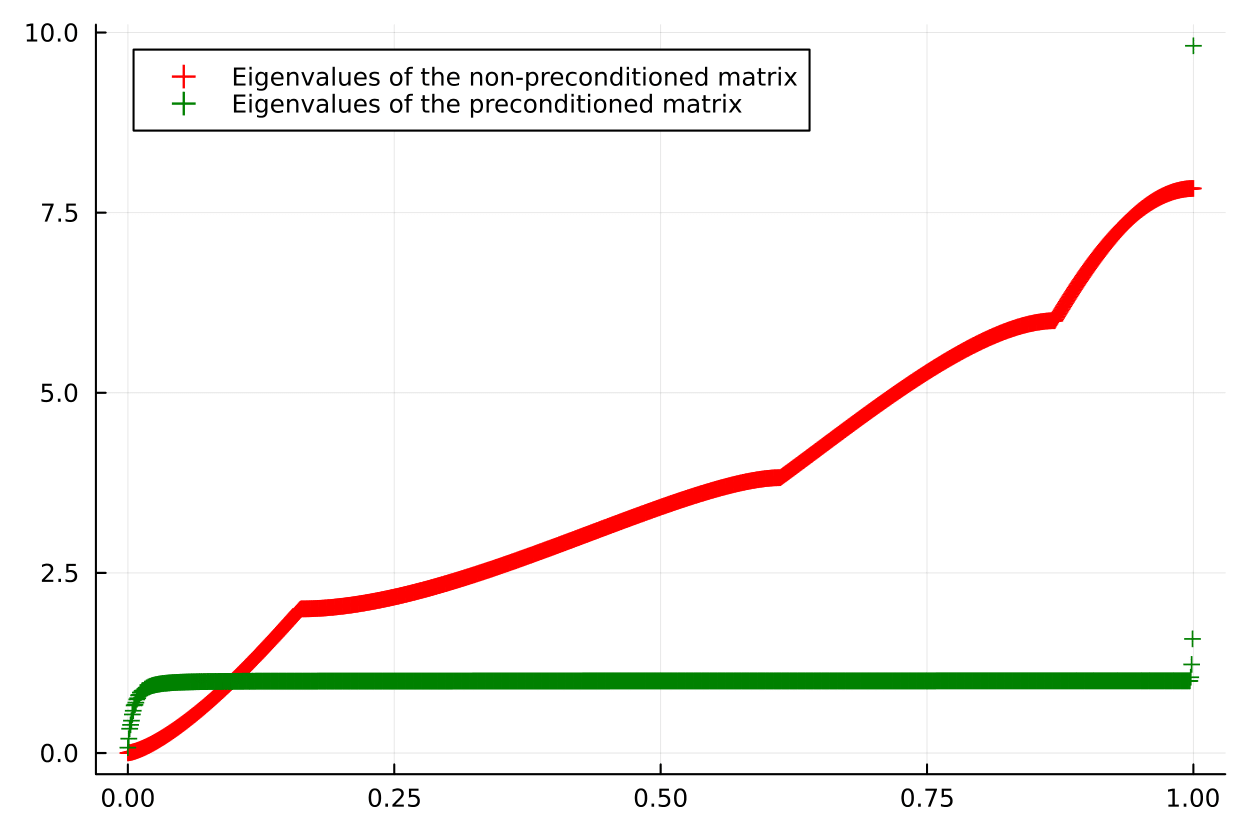}
			\caption{Example \ref{exl:ex1_1d_positivePrec_sparseResidual}: The eigenvalues of the non-preconditioned against the eigenvalues of the preconditioned matrix, for $n_1=100$ (left) and $n_1=400$ (right), arranged in non-decreasing order, where $n_2=2n_1$.}
            \label{fig:ex1_eigs_sym_a15_n100_200_400}
		\end{figure}
\end{example}

\begin{example}\label{exl:ex2_1d_nonPositivePrec_denseResidual}
	In this example we set
	\begin{align*}
			f_{1,1}(\theta)&=2-2\cos(\theta),\qquad
			f_{1,2}(\theta)=-{\rm e}^{-i\theta}(1-{\rm e}^{i\theta})^{\alpha}-{\rm e}^{i\theta}(1-{\rm e}^{-i\theta})^{\alpha}, \\
			f_{2,1}(\theta)&=f_{1,2}(-\theta),\qquad
			f_{2,2}(\theta)=4-2\cos(\theta),
		\end{align*}
	with $\alpha=1.7$. In contrast with the first case, both the coefficient matrix $A_n$ and the preconditioner have negative eigenvalues. As a consequence, the preconditioned matrix has complex eigenvalues. However, the eigenvalues of the preconditioned matrix still have a weak cluster at $1$ as evident in Figure \ref{fig:ex2_eigs_nonSym_a17_n100_200_400} and verified by the data in Table \ref{table:ex2_eigs_nonSym_a17_n100_200_400}. In the same table, the iterations needed for solving a system with coefficient matrix {$A_n$} - employing the  GMRES method and the PGMRES method - are also presented.
	\begin{table}
			\centering
			\begin{tabular}{|c|c|c|c|c|c|}
					\toprule
					$n_1$ & $d_n$ & $N$   & $N/d_n$ & Iter. GMRES  & Iter. PGMRES \\
					\midrule
					$100$ & $300$ & $31$  & $0.103$ &   $>100$    & $18$  \\
					$200$ & $600$ & $38$  & $0.063$ &   $>100$    & $21$  \\
					$400$ & $1200$& $50$  & $0.041$ &   $>100$    & $26$  \\
					\bottomrule
				\end{tabular}
                \caption{Example \ref{exl:ex2_1d_nonPositivePrec_denseResidual}: For increasing matrix size $d_n\in\{300,600,1200\}$ the absolute $N$, and the relative $N/d_n$ number of the eigenvalues of $P_n^{-1}A_n$ lying outside the fixed interval $[0.95,1.05]$ are presented.
                The number of the iterations needed by the GMRES and PGMRES methods is also presented. }\label{table:ex2_eigs_nonSym_a17_n100_200_400}
		\end{table}
	
	\begin{figure}
			\centering
			\includegraphics[width=.48\textwidth]{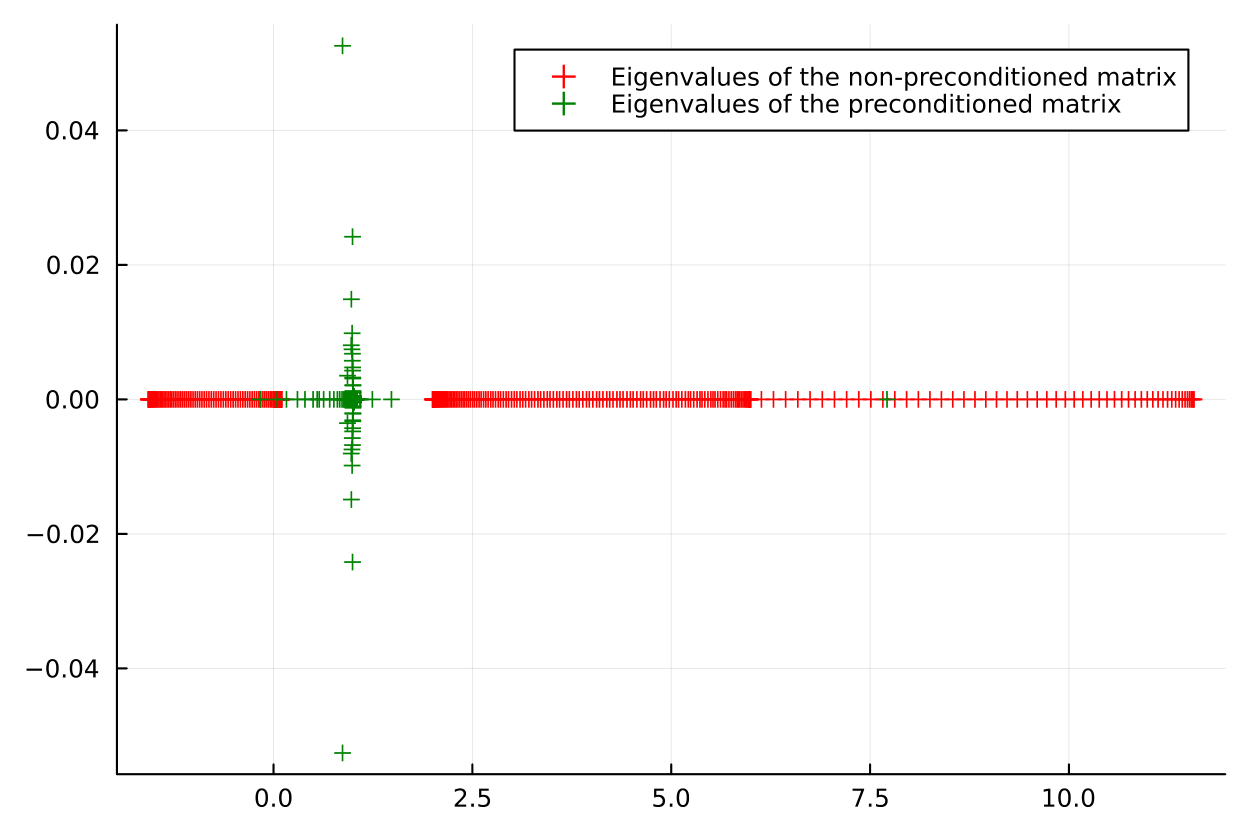}\hspace{0.2cm}
		\includegraphics[width=.48\textwidth]{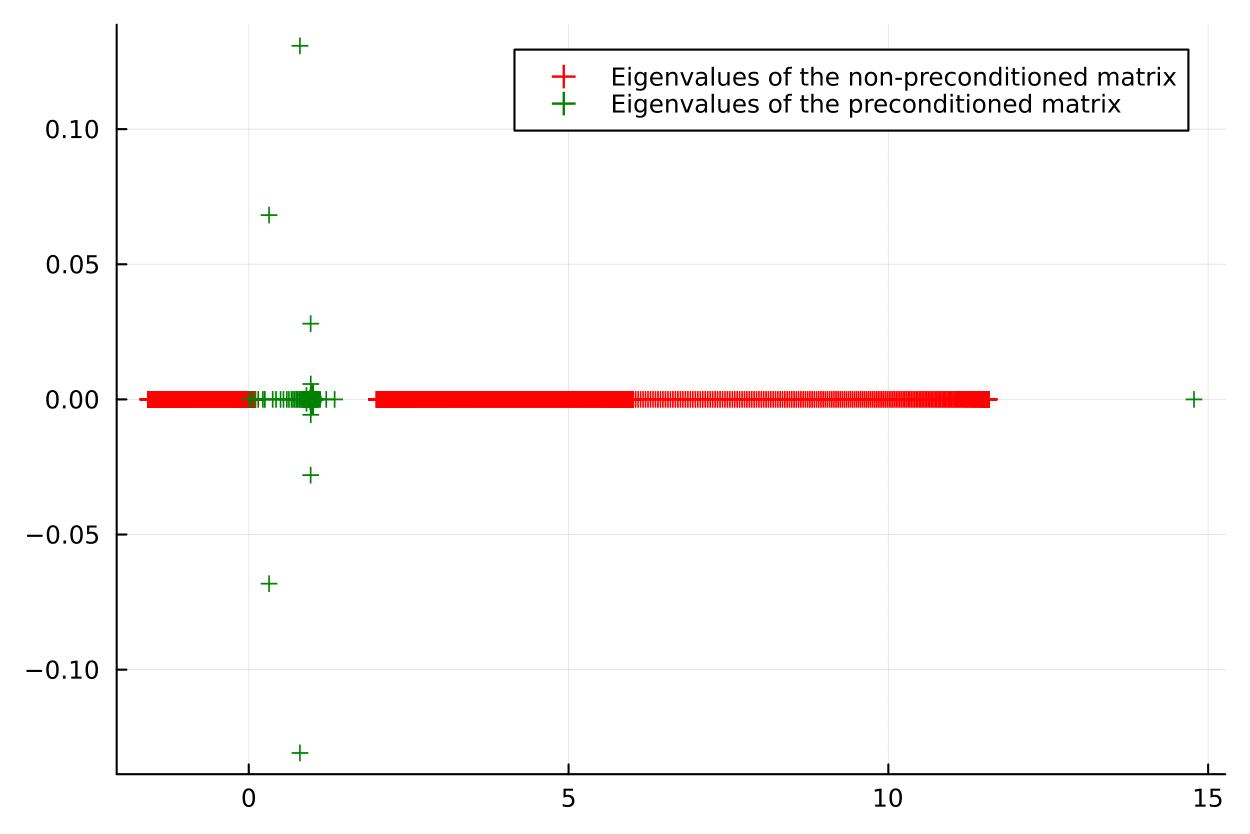}
		\caption{Example \ref{exl:ex2_1d_nonPositivePrec_denseResidual}: The eigenvalues of the non-preconditioned against the eigenvalues of the preconditioned matrix, for $n_1=100$ (left) and $n_1=400$ (right), arranged in non-decreasing order, where $n_2=2n_1$.}\label{fig:ex2_eigs_nonSym_a17_n100_200_400}
		\end{figure}
\end{example}

\begin{example}\label{ex3:ex1_2d_PositivePrec_denseResidual}
{
	In this example, we explore the findings of the previous sections in a two-level setting and each block of the main structure $A_n$ comes from the discretization of a fractional differential operator in dimension $2$. Specifically
	\begin{displaymath}
	A_n=\begin{bmatrix}
		A_{1,1} &  A_{1,2} \\
		A_{2,1} &  A_{2,2}
	\end{bmatrix},
    \end{displaymath}
with $A_{1,1}$ and $A_{2,2}$ stemming from the descritization of a two-level fractional differential equation with constant coefficients using a second-order finite difference scheme. More in detail
\begin{align*}
A_{1,1}&= 4( {I}_n \otimes T_n(q_{\alpha}(\theta_1)) + T_n(q_\beta(\theta_2)) \otimes  {I}_n  ),\\
A_{2,2}&=    {I}_{2n} \otimes T_n(q_{\alpha}(\theta_1)) + T_{2n}(q_\beta(\theta_2)) \otimes  {I}_n,
\end{align*}
with $q_{\gamma}(\theta)=w_{\gamma}(\theta)+w_{\gamma}(-\theta)$, $w_\gamma(\theta)=-\left(\frac{2-\gamma(1-{\rm e}^{-i\theta})}{2}\right)\left(1-{\rm e}^{i\theta}\right)^\gamma$ and $q_{\gamma}(\theta)$ owing the Fourier expansion

\begin{align*}
q_{\gamma}(\theta)&= -2w_{1}^{(\gamma)}-(w_{0}^{(\gamma)}+w_{2}^{(\gamma)})({\rm e}^{-i \theta}+{\rm e}^{i \theta})-\sum_{k=2}^{\infty}w_{k+1}^{(\gamma)}({\rm e}^{-k i \theta}+{\rm e}^{k i \theta}),\\
w_0^{(\gamma)}&=\frac{\gamma}{2}g_0^{(\gamma)},\quad   w_k^{(\gamma)}=\frac{\gamma}{2}g_k^{(\gamma)}+\frac{2-\gamma}{2}g_{k-1}^{(\gamma)}, \quad k\geq 1,
\end{align*}
and $g_k^{(\gamma)}$ as in \eqref{defin_g_aplha_coeff}. Thus the two diagonal blocks of $A_n$ are the two-level Toeplitz matrices
with generating functions $f_{i,i}(\theta_1,\theta_2)= c_i( q_{\alpha}(\theta_1)+q_{\beta}(\theta_2))$ for $i=1,2$ and $c_1=4,\:c_2=1$ and discretization orders $(n,n)$ and $(n,2n)$, respectively. We mention that these functions are real positive and even with a zero of order $\min(\alpha,\beta)$ at zero, and the matrices are real symmetric and ill-conditioned with their minimum eigenvalue
$\lambda_{min}T_{(n_1,n_2)}^{(2)}(f_{i,i})\sim O(n^{-\min(\alpha,\beta)})$ (see \cite{moghaderi171} for a detailed analysis on the matter). For both submatrices was set $\alpha=1.5$ and $\beta=1.8$. Finally $A_{1,2}$ was set to be the $n^2\times 2n^2$ rectangular upper part of a two-level Laplacian
matrix, while $A_{2,1}=A_{1,2}^T$. The assembled $A_n\in \mathbb{R}^{3n^2\times3n^2}$ is real, symmetric, and positive definite.
\newline
The preconditioner was set to be
\begin{displaymath}
	P_n=\begin{bmatrix}
		\tau_n(f_{1,1}) &  \tau_n(f_{2,1}) & \mathbf{0}_{n}          \\
		\tau_n(f_{2,1}) &  \tau_n(f_{2,2}) & \mathbf{0}_{n}          \\
		\mathbf{0}_{n} &  \mathbf{0}_{n} & \tau_n(f_{2,2})
	\end{bmatrix},
\end{displaymath}
with the nonzero blocks selected from the $\tau$ algebra. In \cite{fract2} it was shown that $\tau$ preconditioners are effective for two-dimensional Toeplitz-like systems whenever the generating function is real positive and even with zeros of noninteger order. More precisely,
\begin{align*}
\tau_n(f(\theta_1,\theta_2)) = \left(\mathbb{S}_{n} \otimes \mathbb{S}_{n}\right)D\left(\mathbb{S}_{n} \otimes \mathbb{S}_{n}\right),
\end{align*}
$D$ being the diagonal matrix with the sampling of $f$ over an $n\times n$ grid of its domain and $\mathbb{S}_{n}$ the discrete sine transform matrix defined by
\begin{align*}
[\mathbb{S}_n]_{i,j}&=\sqrt{\frac{2}{n+1}}\sin{\left(i\theta_j\right)},  \qquad i,j=1,\ldots, n.
\end{align*}
Since the Laplacian belongs to the $\tau$ algebra the $\tau_n(f_{2,1})$ is the Laplacian itself. All operations can be performed in $O(n\log n)$ cost. The preconditioner is real symmetric and positive definite as the main matrix and all eigenvalues are real.
As in the previous examples, Figure \ref{fig:ex3_2d_eigs_nonSym_a11_b18_n1200_2700_4800} and Table \ref{table:ex3_2d_eigs_nonSym_a11_b18_n1200_2700_4800} shown that $1$ is a weak cluster for the preconditioned matrix-sequence $\{P_n^{-1}A_n\}_n$ as the percentage of eigenvalues lying outside the interval $[0.95, 1.05]$ approaches zero, as the matrix-size increases.
}

	\begin{table}
			\centering
			\begin{tabular}{|c|c|c|c|c|c|}
					\toprule
					$n$ & $d_n$ & $N$   & $N/d_n$ & Iterations  & Iterations \\
					    &       &       &         & for CG      & for PCG \\
					\midrule
					$20$ & $1200$ & $106$  & $0.088$ &   $>120$    & $14$   \\
					$30$ & $2700$ & $162$  & $0.060$ &   $>169$    & $16$   \\
					$40$ & $4800$ & $217$  & $0.045$ &   $>215$    & $18$   \\
					\bottomrule
			\end{tabular}
            			\caption{ Example \ref{ex3:ex1_2d_PositivePrec_denseResidual}: For increasing matrix-size $d_n\in\{1200,2700,4800\}$ the absolute $N$, and the relative $N/d_n$ number of the eigenvalues of $P_n^{-1}A_n$ lying outside the interval $[0.95,1.05]$ are presented. The number of the iterations of CG and PCG methods is also presented. }\label{table:ex3_2d_eigs_nonSym_a11_b18_n1200_2700_4800}
		\end{table}
	
	\begin{figure}
			\centering
			\includegraphics[width=.48\textwidth]{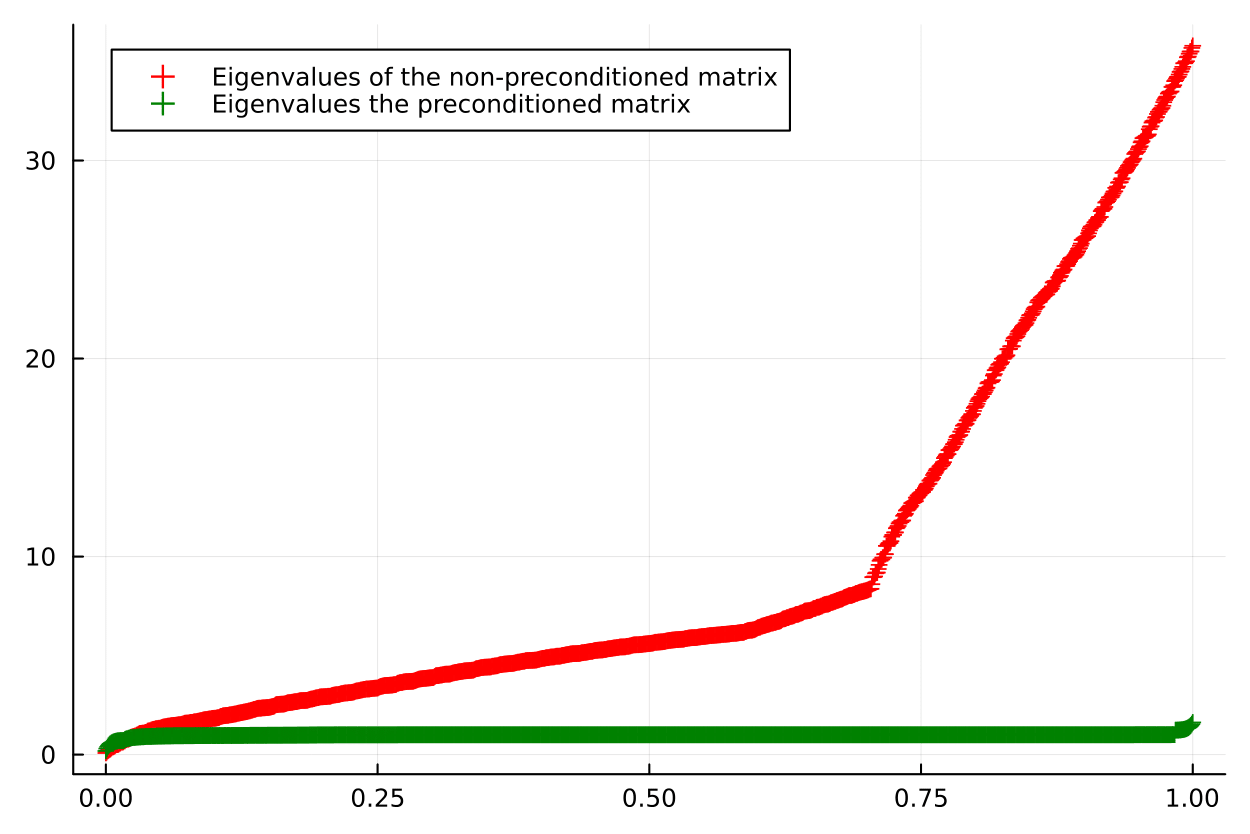}
            \hspace{0.2cm}
			\includegraphics[width=.48\textwidth]{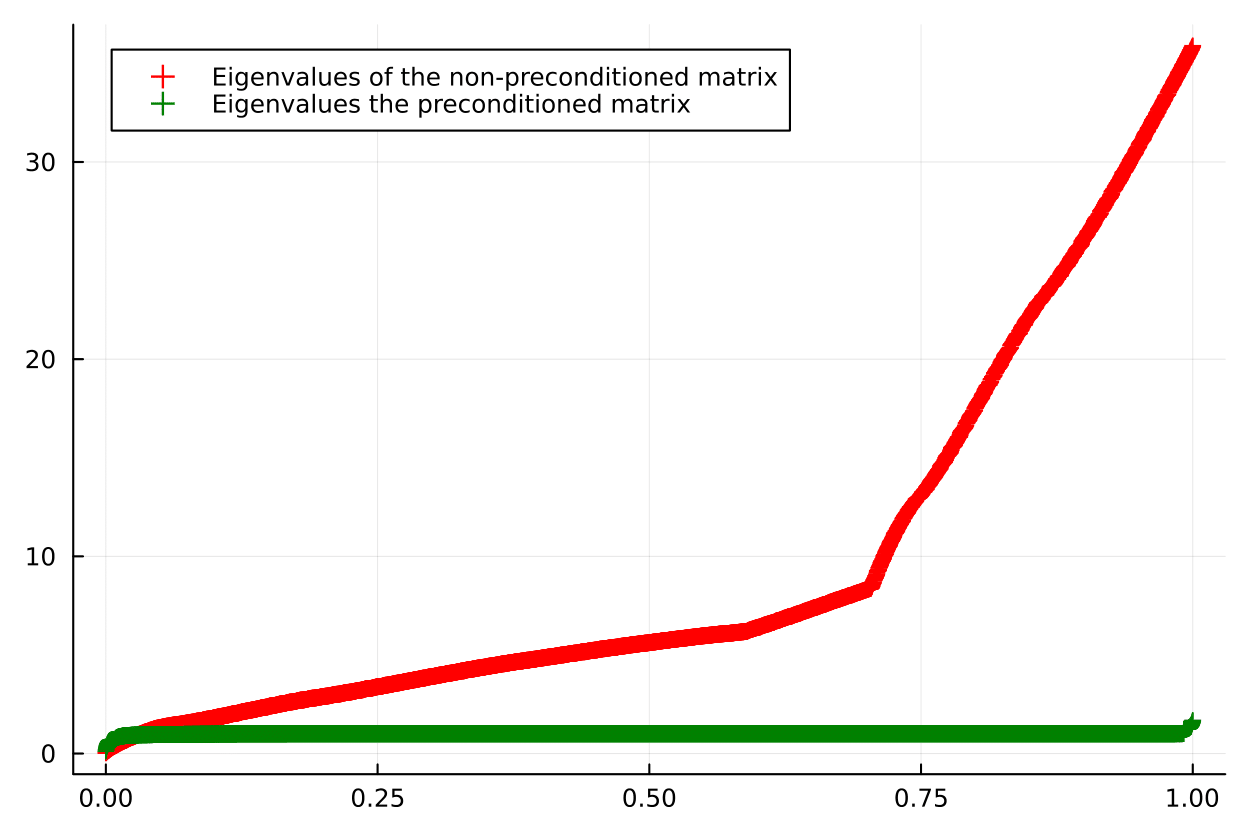}
			\caption{
            Example \ref{ex3:ex1_2d_PositivePrec_denseResidual}: The eigenvalues of the non-preconditioned against the eigenvalues of the preconditioned matrix in the two-dimensional case, for matrix sixe $n=1200$ (left) and $n=4800$ (right), arranged in non-decreasing order. }\label{fig:ex3_2d_eigs_nonSym_a11_b18_n1200_2700_4800}
		\end{figure}
\end{example}

\section{Conclusions}\label{sec:fin}
We have provided a general framework for designing efficient preconditioning strategies for large linear systems whose coefficient matrices are extracted from matrix-sequences in block form. We have dealt in detail the case where the matrix-sequences associated with the blocks are of block unilevel Toeplitz type, proving clustering results for the resulting preconditioned matrix-sequences. Several numerical tests have been presented and critically discussed.

When thinking of future steps, the primary open challenge lies in generalizing the results to the multilevel setting, enabling the treatment of approximations for multidimensional differential or fractional differential problems with variable coefficients: in this direction, the numerical experiments in Section \ref{ssec:fract} for two-level dense structures provide a heuristic ground for the related multilevel analysis.
Such a generalization can follow various possible directions and has to include the case where the blocks are of block multilevel Toeplitz type  or derived from block multilevel GLT matrix-sequences, taking into account the theoretical barriers in \cite{Nega-ill,Nega-gen} for multilevel structures:
in such a case, a way for overcoming the difficulties is the multi-iterative strategy (see \cite{multi} for the original proposal), which consists of combining stationary/(preconditioned) Krylov iterative solvers and multigrid methods (see e.g. \cite{dumb,EMI} and references therein). This will involve notational and technical challenges that need to be handled with care and precision.

\section*{Acknowledgements}
	Marco Donatelli, Samuele Ferri, Valerio Loi, Stefano Serra-Capizzano, Rosita L. Sormani are partly supported by ``Gruppo Nazionale per il Calcolo Scientifico" (INdAM-GNCS). 	
    Marco Donatelli is partially supported by the PRIN project 2022ANC8HL funded by the Italian Ministry of University and Research.
    The work of Stefano Serra-Capizzano is funded by the European High-Performance Computing Joint Undertaking  (JU) under grant agreement No 955701. The JU receives support from the European Union’s Horizon 2020 research and innovation program and Belgium, France, Germany, and Switzerland. Furthermore, Stefano Serra-Capizzano is grateful for the support of the Laboratory of Theory, Economics and Systems – Department of Computer Science at Athens University of Economics and Business and is grateful to
the "Como Lake center for AstroPhysics” of Insubria University.

\end{document}